
\documentclass{article}
\usepackage{amssymb}
\usepackage{amsmath}
\usepackage{arxiv}
\usepackage[utf8]{inputenc}
\usepackage[T1]{fontenc}
\usepackage{url}
\usepackage{booktabs}
\usepackage{amsfonts}
\usepackage{nicefrac}
\usepackage{microtype}
\usepackage{color}
\usepackage{lipsum}
\usepackage{amsthm}
\usepackage{indentfirst}
\usepackage{graphicx}
\usepackage{epstopdf}
\usepackage{fourier}
\usepackage{bm}
\usepackage{mathtools}
\usepackage{latexsym,enumerate}
\usepackage{multicol}
\usepackage[skip=2pt]{caption}
\usepackage[font=small,skip=0pt]{subcaption}
\usepackage{array}
\usepackage{ascii}
\usepackage{algorithm,algorithmic}
\usepackage{mathtools}
\usepackage{arydshln}
\usepackage{soul}
\usepackage[pdfstartview=FitH, CJKbookmarks=true,
bookmarksnumbered=true, bookmarksopen=true,
colorlinks,
linkcolor=green,
anchorcolor=blue, citecolor=blue
]{hyperref}

\setcounter{MaxMatrixCols}{10}

\newcommand{\comment}[1]{}

\newcommand{\BEA}{\begin{eqnarray}}
\newcommand{\EEA}{\end{eqnarray}}

\newtheorem{thm}{Theorem}[section]
\newtheorem{prop}[thm]{Proposition}
\newtheorem{example}[thm]{Example}
\newtheorem{lemma}[thm]{Lemma}

\newtheorem{definition}[thm]{Definition}

\newtheorem{rem}[thm]{Remark}

\newcommand{\PreserveBackslash}[1]{\let\temp=\\#1\let\\=\temp}
\newcolumntype{C}[1]{>{\PreserveBackslash\centering}p{#1}}
\newcolumntype{R}[1]{>{\PreserveBackslash\raggedleft}p{#1}}
\newcolumntype{L}[1]{>{\PreserveBackslash\raggedright}p{#1}}
\newcommand{\stkout}[1]{\ifmmode\text{\sout{\ensuremath{#1}}}\else\sout{#1}\fi}

\begin{document}

\title{Two-step Generalized RBF-Generated Finite Difference Method  on Manifolds}
\author{ Rongji Li \\
School of Information Science and Technology, ShanghaiTech University,
Shanghai 201210, China\\
\texttt{lirj2022@shanghaitech.edu.cn} \And Haichuan Di \\
School of Information Science and Technology, ShanghaiTech University,
Shanghai 201210, China\\
\texttt{dihch2024@shanghaitech.edu.cn} \And Shixiao Willing Jiang \\
Institute of Mathematical Sciences, ShanghaiTech University, Shanghai
201210, China\\
\texttt{jiangshx@shanghaitech.edu.cn} }
\date{\today }
\maketitle

\begin{abstract}
Solving partial differential equations (PDEs) on manifolds  has broad applications in various fields. In this paper, we develop a two-step generalized radial basis function-generated finite difference (gRBF-FD) method for solving PDEs on manifolds without boundaries, identified by randomly sampled point cloud data. The gRBF-FD is based on polyharmonic spline kernels and multivariate polynomials (PHS+Poly) defined over the tangent space in a local Monge coordinate system. The first step is to regress the local target function using a generalized moving least-squares (GMLS)  while the second step is to compensate for the residual  using a PHS interpolation. Our gRBF-FD method has the same interpolant form with the standard RBF-FD but differs in interpolation coefficients.
Our approach utilizes a specific weight function in both the GMLS and PHS steps and implements an automatic tuning strategy for the stencil size $K$ (i.e., the number of nearest neighbors) at each point. These strategies are designed to produce a Laplacian matrix with a specific coefficient structure, thereby enhancing stability and reducing the solution error.
We establish an error bound for the operator approximation in terms of the so-called local stencil diameter as well as in terms of the number of data. We further demonstrate the high accuracy of gRBF-FD through numerical tests on various smooth manifolds from 1D to 4D.
\end{abstract}

\keywords{Generalized RBF-generated finite difference, Randomly sampled
point cloud data, Screened Poisson equations on manifolds, Numerical stability }

\lhead{} \newpage

\section{Introduction}

Partial differential equations (PDEs) on manifolds have a wide range of applications in physics, biology, and engineering problems. For example, the applications include phase separation on biomembranes \cite{elliott2010modeling},liquid crystal alignment on deformable interfaces \cite{nitschke2020liquid}, morphogen transport on cell surfaces \cite{barreira2011surface}, image processing on curved domains \cite{bertalmio2000image} and surface reconstruction \cite{zhao2001fast}, just to list a few. These broad applications motivate the development of accurate and efficient  numerical solvers. In this paper, we focus on a type of mesh-free approach based on local approximations using radial basis functions (RBFs) and multivariate polynomials for solving PDEs on manifolds.


The early application of global RBF collocation methods for solving PDEs can be traced back to Kansa's method  in 1990 \cite{kansa1990multiquadrics}.
Following this success, various RBF collocation methods were subsequently  developed for solving PDEs in Euclidean spaces and on manifolds \cite{franke1998solving,fasshauer1996solving,piret2012orthogonal,fuselier2013high,Natasha2015Solving,harlim2023radial}.
With the dawn of the 21st century,  the local radial basis function-generated finite difference (RBF-FD) method was introduced by several groups  as a replacement of global RBFs \cite{shu2003local, tolstykh2003using, wright2003radial,cecil2004numerical, Wendland2005Scat,shankar2015radial,lehto2017radial,flyer2016role,shaw2019radial,jones2023generalized}.
RBF-FD, as a local method, utilizes both radial basis functions and multivariate polynomials to approximate functions locally.


The principal advantages of both global RBF and local RBF-FD methods include  meshlessness, geometric flexibility, dimension independence, high-order accuracy, and implementation simplicity \cite{Delengov2018RBF,flyer2013radial,flyer2011radial}.
However, RBF-FD is superior to global RBF for the following reasons.
(1) While powerful for small problems (e.g., the number of points $N <10^4$), global RBF methods do not scale well to larger cases due to their expensive computational costs induced by dense matrix systems \cite{glaubitz2024energy}. However, RBF-FD employs sparse matrices enabling efficient scaling to relatively large problems (e.g., $10^4 <N <10^6$) while maintaining high-order accuracy of solutions.
(2) While global RBFs face fundamental challenges such as ill-conditioned matrices and shape parameter tuning, RBF-FD, in contrast, achieves high accuracy by leveraging local processing to handle these issues efficiently during local interpolation and differentiation (see \cite{fuselier2013high} and ref therein).
(3) The global nature of RBFs makes local refinement inefficient, while RBF-FD naturally supports adaptive node placement via compact stencils without global reconfiguration \cite{le2023guidelines}.
(4) Solving dense and large-scale systems in global RBFs challenges parallelization, whereas RBF-FD's sparse structure readily accommodates modern parallel computing frameworks \cite{bollig2012solution,tillenius2015scalable}.
(5) Global RBFs need specialized techniques like ghost points to handle boundaries \cite{fedoseyev2002improved, larsson2003numerical,chen2020novel,ma2021ghost}, whereas RBF-FD maintains high accuracy without special techniques for boundaries as demonstrated in references \cite{bayona2017role,bayona2019role}.


Other mesh-free collocation  methods for solving PDEs include generalized moving least-squares (GMLS) \cite{liang2013solving,gross2020meshfree,li2024generalized}, graph-based approaches \cite{li2016convergent,li2017point,gh2019,jiang2023ghost,yan2023kernel}, and generalized finite difference method (GFDM) \cite{chen2017meshfree,suchde2019meshfree,jiang2024generalized,halada2025overview}, just to name a few. Some characteristics of these methods are listed in \cite{jiang2024generalized} for practical applications.
In particular, GMLS method was early found as a powerful tool for surface fitting \cite{lancaster1981surfaces} and was later extended to the solution of partial differential equations \cite{nayroles1992generalizing,belytschko1994element}.
The theoretical framework of GMLS was established in \cite{Wendland2005Scat,mirzaei2012generalized}
for linear functional approximations.
We will review the GMLS method on manifolds, which is based on local polynomial regression in the tangent space
\cite{liang2013solving,gross2020meshfree,jones2023generalized,li2024generalized}. In our numerical experiments, we will compare our RBF-FD results with those from GMLS for  approximating differential operators and solving PDEs on manifolds.


In this work, we consider a two-step generalized RBF-generated finite difference (gRBF-FD) approach for solving screened Poisson equations on manifolds without boundaries, identified by randomly sampled point cloud data. The basic idea of gRBF-FD is to regress the local function using GMLS followed by fitting the residual using an RBF interpolation.
The first GMLS regression step is to capture the smooth, leading Taylor's expansion component of the target function while the second RBF interpolation step is to compensate for the expansion's remainder  contained within the residual.
Compared to standard RBF-FD, our gRBF-FD method employs the same  interpolant form but differs in the interpolation coefficients. In addition, for problems on manifolds, we follow the tangent-plane  method in \cite{shaw2019radial,wright2023mgm,jones2023generalized}, that is, local polyharmonic spline kernels and
polynomials  (PHS+Poly) are both defined over the projected tangent-space coordinates. It is natural for the tangent-plane PHS+Poly method to approximate the local representation of a function which is defined over the tangent space (see Sections \ref{sec:3.1} and \ref{sec:3.2}). Moreover, the tangent-plane method reduces the computational cost as seen from Remark \ref{rem:35}.


Other innovations of this work can be summarized as follows.\newline
\noindent \textbullet\ To stabilize the approximation, we employ a specific weight function \cite{liang2012geometric,liang2013solving,li2024generalized} in both GMLS regression and PHS interpolation steps to achieve a Laplacian matrix  that is nearly diagonally dominant. Here, we say a matrix is nearly diagonally dominant if  the absolute value of each diagonal entry is relatively large compared to those of the off-diagonal entries (see Definition \ref{def:4.3}). Numerical results demonstrate that the usage of the weight function ensures the stability of the Laplacian matrix.\newline
\noindent \textbullet\ The choice of the stencil size $K$ for each point is important for randomly sampled data. We propose an automatic strategy to tune the stencil size $K$ at each base point, delivering a numerically stable approximation of the discrete Laplacian matrix with a small operator approximation error. Indeed, we aim to select an appropriate $K$ for each point that navigates a trade-off: an overly small $K$ leads to instability, while an unnecessarily large $K$ increases the operator approximation error.\newline
\noindent \textbullet\ Theoretically, we show the approximation error bound of our gRBF-FD method in terms of the so-called stencil diameter for randomly sampled data. We further show that the approximation error rate of RBF-FD is the same as that of GMLS, which is  { of order $(\frac{\log N}{N})^{(l-1)/d}$}. Here, $N$ is the number of data {points}, $l$ is the polynomial degree, and $d$ is
the intrinsic dimension.
Numerically, we observe that by enhancing the approximation space, PHS functions in gRBF-FD help reduce both approximation  and solution errors compared to using GMLS.

The paper is organized as follows. In Section \ref{sec:pre}, we review the GMLS and the tangent-plane RBF-FD approaches on manifolds. In Section \ref{sec:gRBF-FD}, we present the two-step gRBF-FD method for approximating the Laplace–Beltrami operator and also point out its relation to the standard RBF-FD.
In Section \ref{sec:tunek}, we detail our numerical implementation, including normalization of the tangent-plane coordinates, the specific weight function used in two steps, and the auto-tuning strategy for the stencil size $K$. In Section \ref{sec:numerical},  we report supporting numerical results.
We conclude the paper with a summary and several open problems in Section \ref{sec:conclusion}.
We include some proofs in Appendix \ref{app:A} and Appendix \ref{app:B}.


\section{Preliminaries}

\label{sec:pre}

We consider a $d$-dimensional manifold $M$ embedded in the ambient Euclidean space $\mathbb{R}^n$. For a point $\mathbf{x} \in M$, the tangent space is denoted by $T_{\mathbf{x}}M$ with orthonormal basis $\left\{\boldsymbol{t}_1(\mathbf{x}),\ldots,\boldsymbol{t}_d(\mathbf{x})\right\}$. Let $f : M \to \mathbb{R}$ be a smooth function. We are given a set of $N$ distinct points $\mathbf{X}_{M}=\left\{ \mathbf{x}_{i}\right\}_{i=1}^{N}\subset M$  and the corresponding function values $\mathbf{f} = (f(\mathbf{x}_1),\ldots, f(\mathbf{x}_N))^\top$. We briefly review the GMLS  and RBF-FD methods in Sections \ref{sec:2_1} and \ref{sec:PRFD}, respectively, for approximating functions on manifolds using the set of scattered data points $\mathbf{X}_{M}$ and the function values $\mathbf{f}$.

\subsection{GMLS regression of functions on manifolds}

\label{sec:2_1} We first briefly review the basic idea of the GMLS approach,
which utilizes a set of local polynomials to approximate functions on
manifolds \cite{liang2013solving,gross2020meshfree,jiang2024generalized}.
For an arbitrary base point $\mathbf{x}_{0}\in \mathbf{X}_{M}\subset M$, its
$K$-nearest neighbors in $\mathbf{X}_{M}$ are denoted by $S_{\mathbf{x}%
_{0}}=\left\{ \mathbf{x}_{0,k}\right\} _{k=1}^{K}\subset \mathbf{X}_{M}$.
The set $S_{\mathbf{x}_{0}}$ is referred to as a "stencil" (see e.g., \cite%
{flyer2016role,lehto2017radial}). By definition,
 $\mathbf{x}_{0,1}=\mathbf{x}%
_{0}$ is the base point. Let $\mathbb{P}_{\mathbf{x}_{0}}^{l,d}$ be the
space of local \textquotedblleft intrinsic\textquotedblright\ polynomials
with degree up to $l$ in $d$ variables at the base point ${\mathbf{x}_{0}}$,
i.e.,
\begin{equation}
\mathbb{P}_{\mathbf{x}_{0}}^{l,d}=\mathrm{span}\Big\{p_{\boldsymbol{\alpha }%
}\left( \boldsymbol{\theta }\left( \mathbf{x}\right) \right)
:=\prod\limits_{i=1}^{d}\theta _{i}^{\alpha _{i}}\left( \mathbf{x}\right) %
\Big|\ 0\leq \left\vert \boldsymbol{\alpha }\right\vert \leq l\ \Big\},
\label{eq:thet12}
\end{equation}%
where $\boldsymbol{\theta }\left( \mathbf{x}\right) $ is a projected
coordinate system of the local\ tangent space $T_{\mathbf{x}_{0}}M$ with
\begin{equation}
\boldsymbol{\theta }\left( \mathbf{x}\right) =(\theta _{1}(\mathbf{x}%
),\ldots ,\theta _{d}(\mathbf{x})),\text{ \ \ }\theta _{i}(\mathbf{x})=%
\boldsymbol{t}_{i}({\mathbf{x}_{0}})\cdot (\mathbf{x}-{\mathbf{x}_{0}}),%
\text{ \ \ }i=1,\ldots ,d.  \label{eqn:Thex}
\end{equation}%
Here, $\boldsymbol{\alpha }=(\alpha _{1},\alpha _{2},\ldots ,\alpha _{d})\in
\mathbb{N}^{d}$ is the multi-index notation with $\left\vert \boldsymbol{%
\alpha }\right\vert =\alpha _{1}+\cdots +\alpha _{d}$. By definition, the
dimension of the space $\mathbb{P}_{\mathbf{x}_{0}}^{l,d}$ is $m=\left(
\begin{array}{c}
l+d \\
d%
\end{array}%
\right) $.

Given $K>m$ and $\mathbf{f}_{{\mathbf{x}_{0}}}:=(f({\mathbf{x}_{0,1}}),...,f(%
\mathbf{x}_{0,K}))^{\top }$, we can define an operator $\mathcal{I}_{p}:%
\mathbf{f}_{{\mathbf{x}_{0}}}\in \mathbb{R}^{K}\rightarrow \mathcal{I}_{p}%
\mathbf{f}_{{\mathbf{x}_{0}}}\in \mathbb{P}_{\mathbf{x}_{0}}^{l,d}$ such
that $\mathcal{I}_{p}\mathbf{f}_{{\mathbf{x}_{0}}}$ is the optimal solution
of the following moving least-squares problem:
\begin{equation}
\underset{\hat{f}\in \mathbb{P}_{\mathbf{x}_{0}}^{l,d}}{\min }\frac{1}{2}%
\sum_{i,j=1}^{K}\lambda _{ij}\left( f({\mathbf{x}_{0,i}})-\hat{f}({\mathbf{x}%
_{0,i}})\right) \left( f({\mathbf{x}_{0,j}})-\hat{f}({\mathbf{x}_{0,j}}%
)\right) ,  \label{eqn:int_LS}
\end{equation}%
where $\left[ \lambda _{ij}\right] _{i,j=1}^{K}$ is a symmetric
positive-definite matrix. In most literature, $\left[ \lambda _{ij}\right] _{i,j=1}^{K}$ is taken to be a diagonal
matrix in which the diagonal element $\lambda _{kk}$ is a positive weight
function of the distance $\Vert {\mathbf{x}_{0,k}-\mathbf{x}_{0}}%
\Vert$ (see e.g., \cite%
{Wendland2005Scat,lipman2009stable,liang2013solving,gross2020meshfree,jones2023generalized,li2024generalized}). GMLS is a regression technique for approximating functions by
solving the above least-squares problem with a weighted $\ell^{2}$-norm. The
solution to the least-squares problem (\ref{eqn:int_LS}) can be represented
as
\begin{equation}
\hat{f}(\mathbf{x}):=(\mathcal{I}_{p}\mathbf{f}_{{\mathbf{x}_{0}}})(\mathbf{x%
})=\sum_{j=1}^{m}b_{j}p_{\boldsymbol{\alpha }(j)}\left( \boldsymbol{\theta }%
\left( \mathbf{x}\right) \right) ,\text{ \ \ }\mathbf{x}\in M,
\label{eqn:gmls}
\end{equation}%
where each $b_{j}$ is the coefficient associated with the corresponding $j$th
polynomial basis function $p_{\boldsymbol{\alpha }(j)}\left( \boldsymbol{\theta}(\mathbf{x})%
\right) $. Denote the concatenated coefficients $\mathbf{b}%
=(b_{1},...,b_{m})^{\top }$. Then the moving least-squares problem (\ref%
{eqn:int_LS})\ can be formulated in matrix-vector form as
\begin{equation}
\underset{{\mathbf{b\in }}\mathbb{R}^{m}}{\min }\frac{1}{2}\left( \mathbf{f}%
_{\mathbf{x}_{0}}-\boldsymbol{P\mathbf{b}}\right) ^{\top }\boldsymbol{%
\Lambda }\left( \mathbf{f}_{\mathbf{x}_{0}}-\boldsymbol{P\mathbf{b}}\right) ,
\label{eqn:LS_optm}
\end{equation}%
where $\boldsymbol{\Lambda }=\left[ \lambda _{ij}\right] _{i,j=1}^{K}$ is
the $K\times K$\ positive-definite matrix and $\boldsymbol{P}$ is the $%
K\times m$\ Vandermonde-type matrix with components $\boldsymbol{P}_{kj}=p_{%
\boldsymbol{\alpha }(j)}\left( \boldsymbol{\theta }\left( \mathbf{x}{_{0,k}}%
\right) \right) $ for $1\leq k\leq K,1\leq j\leq m$. Thus, these expansion
coefficients $\mathbf{b}=(b_{1},...,b_{m})^{\top }$\ satisfy the normal
equation,
\begin{equation}
(\boldsymbol{P}^{\top }\boldsymbol{\Lambda }\boldsymbol{P})\mathbf{b}=%
\boldsymbol{P}^{\top }\boldsymbol{\Lambda }\mathbf{f}_{{\mathbf{x}_{0}}}.
\label{eqn:Pij}
\end{equation}%
The above normal equation  admits a unique solution if $\boldsymbol{P}$  has full column rank  ($\mathrm{rank}(\boldsymbol{P})=m$).



\subsection{Local RBF-FD approximation of functions on manifolds using the
tangent plane method}

\label{sec:PRFD}

%

In most previous studies on RBF-FD methods, the RBF interpolant
was defined over the Euclidean radial distance in  $\mathbb{R}^n$
to approximate differential operators on manifolds (see e.g., \cite%
{shankar2015radial,lehto2017radial,petras2018rbf, alvarez2021local}).
Recently, in order to approximate the Laplace-Beltrami operator on manifolds, authors
of \cite{shaw2019radial,wright2023mgm,jones2023generalized} first consider a PHS+Poly interpolant of the function $f$ to
the scattered data in the stencil $S_{\mathbf{x}_0}$ using the so-called tangent-plane RBF-FD method as follows:%
\begin{equation}
(\mathcal{I}_{\phi p}\mathbf{f}_{{\mathbf{x}_{0}}})\left( \mathbf{x}\right)
:=\sum_{k=1}^{K}a_{k}\phi \left( \Vert \boldsymbol{\theta }\left( \mathbf{x}%
\right) -\boldsymbol{\theta }\left( \mathbf{x}_{0,k}\right) \Vert \right)
+\sum_{j=1}^{m}b_{j}p_{\boldsymbol{\alpha }(j)}\left( \boldsymbol{\theta }%
\left( \mathbf{x}\right) \right) ,\ \ \ \mathbf{x}\in M,  \label{eqn:If2}
\end{equation}%
where $\phi $ is the radial function, $%
\Vert \cdot \Vert $ is the standard Euclidean norm in $\mathbb{R}^{n}$ and $%
p_{\boldsymbol{\alpha }(j)}$ are multivariate polynomials. Here, both the PHS radial function $\phi $  and the
polynomial $p_{\boldsymbol{\alpha }(j)}$ (equation (\ref{eq:thet12})) are defined
over the projected tangent-plane coordinate $\boldsymbol{\theta }\left(
\mathbf{x}\right) $ in (\ref{eqn:Thex}).  Notice that this
is similar to GMLS on manifolds, in which the polynomials are also defined
over the projected coordinate $\boldsymbol{\theta }\left( \mathbf{x}\right) $
as seen in (\ref{eqn:gmls}). By definition, we have $\boldsymbol{\theta }\left(
\mathbf{x}_{0,1}\right) =\boldsymbol{\theta }\left( \mathbf{x}_{0}\right) =%
\mathbf{0}$ to be center of the projected coordinate system. One
significant advantage of the tangent-plane method, as noted in \cite%
{jones2023generalized}, is that it defines interpolation functions locally
on the tangent space of the manifold, which can simplify the calculation of
their derivatives. We will discuss this advantage later in Remark \ref{rem:35}. 

Common choices of the radial function $\phi $ include Polyharmonic spline
(PHS) function \cite{flyer2016role,jones2023generalized}, Gaussian function
\cite{fasshauer2012stable}, inverse quadratic function \cite%
{harlim2023radial}, inverse multiquadric function \cite{fuselier2013high}
and Mat\'{e}rn class function \cite{fuselier2013high}. For most bell-shaped
positive-definite RBFs, including the Gaussian and Mat\'{e}rn classes, they
contain a shape parameter that controls the flatness of the RBF. In general,
the shape parameter needs appropriate tuning in a certain range;
specifically, a small value achieves better accuracy but also results in an
ill-conditioned interpolation matrix. When the shape parameter is
properly chosen, the local RBF-FD can achieve the convergence results (see
e.g., \cite{shankar2015radial,lehto2017radial,petras2018rbf, alvarez2021local}%
).

On the other hand, the $U$-shaped  PHS function is attractive due to its
absence of the shape parameter. For RBF-FD methods, the PHS
function has been used together with polynomials in many studies (see e.g.,
\cite{flyer2016role,bayona2017role,bayona2019role,jones2023generalized}). In
this article, we  focus on the following shape-parameter-free PHS
function,
\begin{equation}
\text{Polyharmonic spline (PHS)}:\qquad \phi (r)=r^{2\kappa +1},
\label{eqn:phs}
\end{equation}%
where the variable $r$ is the radial distance and the PHS parameter $%
\kappa $  $\in \mathbb{N}$ controls the smoothness of $\phi $.
 While some theories suggest using  $\phi (r)=r^{2\kappa +1}$ in odd dimension, and
$\phi (r)=r^{2\kappa}\log{r}$ in even dimension, that distinction may not be necessary \cite{flyer2016role}.
In the RBF-FD interpolant (\ref%
{eqn:If2}), the radial distance $r$ is taken to be the
 norm in $\mathbb{R}^{d}$, i.e., $\ r=\Vert \boldsymbol{\theta}(\mathbf{x})-\boldsymbol{\theta}({\mathbf{x}_{0,k}})\Vert $.


Given data $f(\mathbf{x}_{0,k})$\ at node $\mathbf{x}_{0,k}\ (k=1,\ldots ,K)$%
,\ the expansion coefficients are obtained by enforcing $K$ interpolation conditions together with $m$ additional moment conditions ensuring uniqueness:%
\begin{equation}
\begin{array}{ll}
(\mathcal{I}_{\phi p}\mathbf{f}_{{\mathbf{x}_{0}}})\left( \mathbf{x}%
_{0,k}\right) =f(\mathbf{x}_{0,k}), & \text{for}\ k=1,\ldots ,K, \\
\sum_{k=1}^{K}a_{k}p_{\boldsymbol{\alpha }(j)}\left( \boldsymbol{\theta }%
\left( \mathbf{x}_{0,k}\right) \right) =0, & \text{for}\ j=1,\cdots ,m.%
\end{array}
\label{eqn:mcond}
\end{equation}%
Denote the coefficients as $\mathbf{a}=\left[ a_{1},a_{2},\cdots ,a_{K}%
\right] ^{\top }$\ and $\mathbf{b}=\left[ b_{1},b_{2},\cdots ,b_{m}\right]
^{\top }$. In matrix form,  the $K+m$ conditions above can be written as%
\begin{equation}
\begin{bmatrix}
\boldsymbol{\Phi } & \boldsymbol{P} \\
\boldsymbol{P}^{\top } & \mathbf{0}%
\end{bmatrix}%
\begin{bmatrix}
\mathbf{a} \\
\mathbf{b}%
\end{bmatrix}%
=%
\begin{bmatrix}
\mathbf{f}_{\mathbf{x}_{0}} \\
\mathbf{0}%
\end{bmatrix}%
,  \label{eqn:mat_PRFD}
\end{equation}%
where $\boldsymbol{P}\in \mathbb{R}^{K\times m}$ is defined in (\ref{eqn:Pij}%
) and the symmetric $\boldsymbol{\Phi }\in \mathbb{R}^{K\times K}$ is
defined as
\begin{equation}
\boldsymbol{\Phi }=%
\begin{bmatrix}
\phi \left( \Vert \boldsymbol{\theta }\left( \mathbf{x}_{0,1}\right) -%
\boldsymbol{\theta }\left( \mathbf{x}_{0,1}\right) \Vert \right) & \phi
\left( \Vert \boldsymbol{\theta }\left( \mathbf{x}_{0,1}\right) -\boldsymbol{%
\theta }\left( \mathbf{x}_{0,2}\right) \Vert \right) & \cdots & \phi \left(
\Vert \boldsymbol{\theta }\left( \mathbf{x}_{0,1}\right) -\boldsymbol{\theta
}\left( \mathbf{x}_{0,K}\right) \Vert \right) \\
\phi \left( \Vert \boldsymbol{\theta }\left( \mathbf{x}_{0,2}\right) -%
\boldsymbol{\theta }\left( \mathbf{x}_{0,1}\right) \Vert \right) & \phi
\left( \Vert \boldsymbol{\theta }\left( \mathbf{x}_{0,2}\right) -\boldsymbol{%
\theta }\left( \mathbf{x}_{0,2}\right) \Vert \right) & \cdots & \phi \left(
\Vert \boldsymbol{\theta }\left( \mathbf{x}_{0,2}\right) -\boldsymbol{\theta
}\left( \mathbf{x}_{0,K}\right) \Vert \right) \\
\vdots & \vdots & \ddots & \vdots \\
\phi \left( \Vert \boldsymbol{\theta }\left( \mathbf{x}_{0,K}\right) -%
\boldsymbol{\theta }\left( \mathbf{x}_{0,1}\right) \Vert \right) & \phi
\left( \Vert \boldsymbol{\theta }\left( \mathbf{x}_{0,K}\right) -\boldsymbol{%
\theta }\left( \mathbf{x}_{0,2}\right) \Vert \right) & \cdots & \phi \left(
\Vert \boldsymbol{\theta }\left( \mathbf{x}_{0,K}\right) -\boldsymbol{\theta
}\left( \mathbf{x}_{0,K}\right) \Vert \right)%
\end{bmatrix}%
.  \label{eqn:Phi}
\end{equation}
For the well-posed linear system (\ref{eqn:mat_PRFD}), its solution can be
obtained using the Schur complement of $\boldsymbol{\Phi }$\ by
\begin{equation}
\begin{array}{l}
\mathbf{a}=\boldsymbol{\Phi }^{-1}\left( \mathbf{I}-\boldsymbol{P}\left(
\boldsymbol{P}^{\top }\boldsymbol{\Phi }^{-1}\boldsymbol{P}\right) ^{-1}%
\boldsymbol{P}^{\top }\boldsymbol{\Phi }^{-1}\right) \mathbf{f}_{\mathbf{x}%
_{0}} \\
\mathbf{b}=\left( \boldsymbol{P}^{\top }\boldsymbol{\Phi }^{-1}\boldsymbol{P}%
\right) ^{-1}\boldsymbol{P}^{\top }\boldsymbol{\Phi }^{-1}\mathbf{f}_{%
\mathbf{x}_{0}}.%
\end{array}
\label{eqn:grbf1}
\end{equation}

For the PHS $\phi (r)=r^{2\kappa +1}$, the chosen parameter $\kappa $ should
be less than or equal to the degree of the polynomial $l$  as defined in (\ref%
{eq:thet12}). Under this constraint on $\kappa $, it can be shown that $%
\boldsymbol{\Phi }$ is conditionally positive definite on the subspace
satisfying the $m$ moment conditions in (\ref{eqn:mcond}) \cite{iske2003approximation,Wendland2005Scat,jones2023generalized}.
If the stencil points satisfy $\mathrm{rank}(\boldsymbol{P}) = m$, then the system (\ref{eqn:mat_PRFD}) is non-singular
 and the PHS+Poly interpolant is well-posed.

\section{Two-step Generalized RBF-FD method}

\label{sec:gRBF-FD}

Motivated by the tangent-plane RBF-FD method in \cite{jones2023generalized},
we develop the generalized RBF-FD (gRBF-FD) method for solving PDEs on
manifolds, identified by randomly sampled data. For gRBF-FD method, the
local approximation space matches that of the tangent-plane RBF-FD but the
 distinction lies in how the expansion coefficients are calculated. That is,
we still consider using the PHS+Poly interpolant (\ref{eqn:If2}), in which
both the PHS function and the polynomials are defined over the projected
coordinate system. It is natural, as the geometry on the manifold $M$ can
be pulled back to define these functions on the projected tangent space,
which is spanned by the local $d$-dimensional coordinates $\boldsymbol{%
\theta }=(\theta _{1},\ldots ,\theta _{d})$. We next introduce how we
approximate the expansion coefficients $\left\{ a_{k}\right\} _{k=1}^{K}$\
and $\left\{ b_{j}\right\} _{j=1}^{m}$ in (\ref{eqn:If2}).

\subsection{Approximation of the expansion coefficients in gRBF-FD method}\label{sec:3.1}

The gRBF-FD method employs a two-step approximation for the target function $%
f$. For the first step, we apply the GMLS regression technique to
approximate the target function $f$. Then the coefficients $\mathbf{b}$ can
be solved from the normal equation in (\ref{eqn:Pij}),
\begin{equation}
\mathbf{b}=\left( \boldsymbol{P}^{\top }\boldsymbol{\Lambda P}\right) ^{-1}%
\boldsymbol{P}^{\top }\boldsymbol{\Lambda }\mathbf{f}_{\mathbf{x}_{0}}.
\label{eqn:b}
\end{equation}%
This provides a first-step approximation to the function $f$\ and then the
residual of the GMLS approximation becomes%
\begin{equation*}
\mathbf{s}_{{\mathbf{x}_{0}}}:=\mathbf{f}_{{\mathbf{x}_{0}}}-(\mathcal{I}_{p}%
\mathbf{f}_{{\mathbf{x}_{0}}})|_{S_{\mathbf{x}_{0}}}=\mathbf{f}_{{\mathbf{x}%
_{0}}}-\boldsymbol{P}\mathbf{b},
\end{equation*}%
where the operator $(\mathcal{I}_{p}\mathbf{f}_{{\mathbf{x}_{0}}})(\mathbf{x}%
)=\sum_{j=1}^{m}b_{j}p_{\boldsymbol{\alpha }(j)}\left( \boldsymbol{\theta }%
\left( \mathbf{x}\right) \right) $ is defined in (\ref{eqn:gmls}).

For the second step, we fit the above residual $\mathbf{s}_{{\mathbf{x}_{0}}}$ using the PHS interpolant, that is,
\begin{equation}
\left( \mathcal{I}_{\phi }\mathbf{s}_{\mathbf{x}_{0}}\right) \left( \mathbf{x%
}\right) =\sum_{k=1}^{K}a_{k}\phi \left( \Vert \boldsymbol{\theta }\left(
\mathbf{x}\right) -\boldsymbol{\theta }\left( \mathbf{x}_{0,k}\right) \Vert
\right) .  \label{eqn:Ipph}
\end{equation}%
The coefficients $\mathbf{a}=\left[ a_{1},a_{2},\cdots ,a_{K}\right] ^{\top
} $ are determined by the interpolation condition,
\begin{equation}
\boldsymbol{\Phi }\mathbf{a=s}_{{\mathbf{x}_{0}}}=\mathbf{f}_{{\mathbf{x}_{0}%
}}-\boldsymbol{P}\mathbf{b=f}_{{\mathbf{x}_{0}}}-\boldsymbol{P}\left(
\boldsymbol{P}^{\top }\boldsymbol{\Lambda P}\right) ^{-1}\boldsymbol{P}%
^{\top }\boldsymbol{\Lambda }\mathbf{f}_{\mathbf{x}_{0}},  \label{eqn:PHisa}
\end{equation}%
where $\boldsymbol{\Phi }$ is given by (\ref{eqn:Phi}). Then we can compute
the coefficients $\mathbf{a}$ as
\begin{equation}
\mathbf{a}=\boldsymbol{\Phi }^{-1}\left( \mathbf{f}_{{\mathbf{x}_{0}}}-%
\boldsymbol{P}\mathbf{b}\right) =\boldsymbol{\Phi }^{-1}\left( \mathbf{I}-%
\boldsymbol{P}\left( \boldsymbol{P}^{\top }\boldsymbol{\Lambda P}\right)
^{-1}\boldsymbol{P}^{\top }\boldsymbol{\Lambda }\right) \mathbf{f}_{\mathbf{x%
}_{0}}.  \label{eqn:a}
\end{equation}



The basic idea of gRBF-FD is to regress the function using GMLS followed
by fitting the residual $\mathbf{s}_{{\mathbf{x}_{0}}}$ using the PHS
interpolant. Intuitively,  GMLS captures the smooth, leading
component of the Taylor expansion of $f$, while PHS compensates for the
expansion's remainder contained within the residual. As a result, we arrive
at a PHS+Poly interpolant for the gRBF-FD method, 
\begin{equation}
(\mathcal{I}_{\phi p}\mathbf{f}_{{\mathbf{x}_{0}}})\left( \mathbf{x}\right)
=(\mathcal{I}_{\phi }\mathbf{s}_{{\mathbf{x}_{0}}})\left( \mathbf{x}\right)
+\left( \mathcal{I}_{p}\mathbf{f}_{\mathbf{x}_{0}}\right) \left( \mathbf{x}%
\right) :=\sum_{k=1}^{K}a_{k}\phi \left( \Vert \boldsymbol{\theta }\left(
\mathbf{x}\right) -\boldsymbol{\theta }\left( \mathbf{x}_{0,k}\right) \Vert
\right) +\sum_{j=1}^{m}b_{j}p_{\boldsymbol{\alpha }(j)}\left( \boldsymbol{%
\theta }\left( \mathbf{x}\right) \right) ,\text{ \ }\mathbf{x}\in M,
\label{eqn:GPRFD_inerpolant}
\end{equation}%
where the coefficients $\mathbf{a}$ and $\mathbf{b}$ are determined by (\ref%
{eqn:a}) and (\ref{eqn:b}), respectively. We note that this interpolant
formulation is exactly the same as the one in (\ref{eqn:If2}) for RBF-FD
while the only difference comes from the evaluation of the coefficients $%
\mathbf{a}$ and $\mathbf{b}$.




\begin{rem}
The relationship between RBF-FD and gRBF-FD can be revealed by a direct comparison of the coefficients in (\ref{eqn:grbf1}) with those in (\ref{eqn:a}) and (\ref{eqn:b}).
Specifically, the coefficients $\mathbf{a}$ and $\mathbf{b}$ are
identical for both methods if the weight matrix in gRBF-FD is chosen as $\boldsymbol{%
\Lambda }=\boldsymbol{\Phi }^{-1}$.
We therefore refer to this more general formulation as the generalized RBF-FD (gRBF-FD) method.
Moreover,
for RBF-FD, those moment conditions in (\ref{eqn:mcond}) admit a
natural interpretation. By applying the RBF interpolant to the
residual, we arrive at the expression, $\boldsymbol{\Phi }\mathbf{a=f}_{{%
\mathbf{x}_{0}}}-\boldsymbol{P}\left( \boldsymbol{P}^{\top }\boldsymbol{\Phi
}^{-1}\boldsymbol{P}\right) ^{-1}\boldsymbol{P}^{\top }\boldsymbol{\Phi }%
^{-1}\mathbf{f}_{\mathbf{x}_{0}}$. This follows immediately from equation (%
\ref{eqn:PHisa}) by selecting the weight $\boldsymbol{\Lambda }=\boldsymbol{%
\Phi }^{-1}$. Left multiplying both sides of this equation by $\boldsymbol{P}%
^{\top }\boldsymbol{\Phi }^{-1}$ causes the right-hand-side term to vanish,
yielding the moment conditions: $\boldsymbol{P}^{\top }\boldsymbol{\Phi }%
^{-1}\boldsymbol{\Phi }\mathbf{a}=\boldsymbol{P}^{\top }\mathbf{a}=\mathbf{0}
$.
\end{rem}

\begin{rem}
\label{rem:1oK} There are many choices of the weight $\boldsymbol{\Lambda }%
=\left\{ \lambda _{ij}\right\} _{i,j=1}^{K}$ in the moving least-squares (%
\ref{eqn:LS_optm}) as shown in {\cite%
{Wendland2005Scat,liang2013solving,gross2020meshfree}}. As reported in
previous works {\cite%
{liang2012geometric,liang2013solving,jiang2024generalized,li2024generalized}}, the diagonal
weight function,
\begin{equation}
\boldsymbol{\Lambda }_{K} = \{\lambda_{ij}\}_{i,j=1}^K,\qquad \textrm{with diagonal entries\ }
\lambda _{kk}=\left\{
\begin{array}{ll}
1, & \text{if }k=1 \\
1/K, & \text{if }k=2,\ldots ,K%
\end{array}%
\right.,  \label{eqn:k_weight}
\end{equation}%
can numerically provide a stable approximation to the Laplace-Beltrami
operator across various data sets. In the following, this weight is referred
to as the diagonal $1/K$ weight function. For the gRBF-FD method, we will focus
on this special weight function and compare its performance with other
weight functions (see Figs. \ref{fig:PRFD_1} and \ref{fig:PRFD_2} in the
following Section \ref{sec:wtrg} for the numerical performance among
different choices of $\boldsymbol{\Lambda }$).
\end{rem}



\subsection{Monge parametrization}\label{sec:3.2}

To facilitate the computation of local geometry, we employ a Monge
parametrization of the local manifold near the base point $\mathbf{x}_{0}$
\cite%
{monge1809application,pressley2010elementary,liang2013solving,jones2023generalized,gross2020meshfree}%
. A local coordinate chart for the manifold near the base point is
defined using the embedding map $\boldsymbol{q}$,
\begin{equation}
\mathbf{x}=\boldsymbol{q}\left( \theta _{1},\ldots ,\theta _{d};\mathbf{x}%
_{0}\right) =\mathbf{x}_{0}+\sum_{i=1}^{d}\theta _{i}\boldsymbol{t}%
_{i}\left( \mathbf{x}_{0}\right) +\sum_{s=1}^{n-d}q_{s}\left( \theta
_{1},\ldots ,\theta _{d}\right) \boldsymbol{n}_{s}\left( \mathbf{x}%
_{0}\right) ,  \label{eqn:q0}
\end{equation}%
where $\left( \theta _{1},\ldots ,\theta _{d}\right) $ are the local
coordinates as defined in (\ref{eqn:Thex}), $q_{s}\left( \theta _{1},\ldots
,\theta _{d}\right) =\boldsymbol{n}_{s}\left( \mathbf{x}_{0}\right) \cdot (%
\mathbf{x}-\mathbf{x}_{0})$ ($s=1,\ldots ,n-d$), and $\{\boldsymbol{n}%
_{s}\}_{s=1}^{n-d}$\ are the $n-d$ orthonormal vectors that are orthogonal
to the tangent space $T_{\mathbf{x}_{0}}M$. This is known as the Monge
parametrization of manifold.  This representation allows us to express differential operators in terms of the local coordinates $\left( \theta _{1},\ldots ,\theta _{d}\right) $, which is essential for computing the Laplace–Beltrami operator in our mesh-free framework.

Indeed, all above multivariate polynomials and
polyharmonic spline functions in RBF-FD (\ref{eqn:If2}) and gRBF-FD (\ref%
{eqn:GPRFD_inerpolant}) are defined over the tangent plane and have been
formulated as their local representations in local Monge coordinates $\left( \theta _{1},\ldots ,\theta _{d}\right) $.
That is, the local representation of $(\mathcal{I}_{\phi p}\mathbf{f}_{{\mathbf{x}_{0}}})$
is $(\mathcal{I}_{\phi p}\mathbf{f}_{{\mathbf{x}_{0}}}) \circ \boldsymbol{\theta }^{-1}:\boldsymbol{\theta }\left(
U_{0}\right) \rightarrow \mathbb{R}$ for some open set $U_0 \subset M$, and the interpolant $\sum_{k=1}^{K}a_{k}\phi \left( \cdot
\right) +\sum_{j=1}^{m}b_{j}p_{\boldsymbol{\alpha }(j)}\left( \cdot \right) $ is approximating the local representation of the target function $f$, which is defined over $\boldsymbol{\theta }\left(U_{0}\right)$.
This is natural in differential geometry because differentiating a function on a manifold requires pulling it back to a local coordinate chart and then computing the derivative with respect to those local coordinates.


Then the tangent vector bases for $\mathbf{x}\in S_{\mathbf{x}_{0}}$ in the
Monge coordinate system can be calculated as
\begin{equation}
\frac{\partial \boldsymbol{q}}{\partial \theta _{k}}=\boldsymbol{t}%
_{k}\left( \mathbf{x}_{0}\right) +\sum_{s=1}^{n-d}\frac{\partial q_{s}}{%
\partial \theta _{k}}\boldsymbol{n}_{s}\left( \mathbf{x}_{0}\right) ,\quad
\text{for }k=1,\ldots ,d.  \label{eqn:dq}
\end{equation}%
Each component $g_{ij}$\ of the local Riemannian metric $g=\left(
g_{ij}\right) _{i,j=1}^{d}$ for $\mathbf{x}\in S_{\mathbf{x}_{0}}$ becomes
\begin{equation}
g_{ij}=\left\langle \frac{\partial \boldsymbol{q}}{\partial \theta _{i}},%
\frac{\partial \boldsymbol{q}}{\partial \theta _{j}}\right\rangle =\delta
_{ij}+\sum_{s=1}^{n-d}\frac{\partial q_{s}}{\partial \theta _{i}}\frac{%
\partial q_{s}}{\partial \theta _{j}},  \label{eqn:gij}
\end{equation}%
where $\delta _{ij}$\ is the Kronecker delta function. Let $\left(
g^{ij}\right) _{i,j=1}^{d}$ be the inverse of the metric tensor. Then we
have the following properties in a Monge coordinate system centered at $\mathbf{x}_{0}$:

\begin{prop}
\label{prop:norm} Let $\{U_{0};\theta _{1},\ldots ,\theta _{d}\}$ be a Monge
coordinate system centered at $\mathbf{x}_{0}$\ where $U_{0}\subset M$ is an
open neighborhood of $\mathbf{x}_{0}$ and $M$ is a $d$-dimensional manifold
with Riemannian metric $g$. Then\newline
\noindent (1) For all $1\leq i,j\leq d$, $g_{ij}(\mathbf{x}_{0})=g^{ij}(%
\mathbf{x}_{0})=\delta _{ij}$.\newline
\noindent (2) For all $1\leq i,j,k\leq d$, $\Gamma _{ij}^{k}(\mathbf{x}%
_{0})=0$.
\newline
\noindent (3) For all $1\leq i,j,k\leq d$, $\partial _{k}g_{ij}(\mathbf{x}%
_{0})=0$. 
\end{prop}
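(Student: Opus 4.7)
The plan is to reduce all three statements to one pivotal fact: in the Monge parametrization centered at $\mathbf{x}_0$, the normal components $q_s(\theta_1,\ldots,\theta_d)$ satisfy $\partial q_s/\partial \theta_k(\mathbf{0}) = 0$ for every $1\le k\le d$ and $1\le s\le n-d$. This is the geometric content of writing the manifold locally as a graph over its tangent plane: the tangent vectors $\partial \boldsymbol{q}/\partial\theta_k$ evaluated at $\mathbf{x}_0$ must land in $T_{\mathbf{x}_0}M=\mathrm{span}\{\boldsymbol{t}_1,\ldots,\boldsymbol{t}_d\}$, so their coefficients along each normal $\boldsymbol{n}_s(\mathbf{x}_0)$, which are precisely $\partial q_s/\partial\theta_k(\mathbf{0})$ by (\ref{eqn:dq}), must vanish. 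I would first state this as a short lemma and verify it by computing $\boldsymbol{n}_s(\mathbf{x}_0)\cdot \partial\boldsymbol{q}/\partial\theta_k|_{\mathbf{x}_0}$ directly from (\ref{eqn:dq}) together with the orthogonality $\boldsymbol{n}_s\cdot\boldsymbol{t}_k=0$.

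With this fact in hand, part (1) is immediate from (\ref{eqn:gij}): the sum over $s$ drops out at $\mathbf{x}_0$, leaving $g_{ij}(\mathbf{x}_0)=\delta_{ij}$, and therefore the matrix inverse also equals the identity, giving $g^{ij}(\mathbf{x}_0)=\delta_{ij}$. For part (3) I would differentiate (\ref{eqn:gij}) with respect to $\theta_k$, obtaining
\begin{equation*}
\partial_k g_{ij} \;=\; \sum_{s=1}^{n-d}\Bigl(\tfrac{\partial^2 q_s}{\partial\theta_k\partial\theta_i}\tfrac{\partial q_s}{\partial\theta_j}+\tfrac{\partial q_s}{\partial\theta_i}\tfrac{\partial^2 q_s}{\partial\theta_k\partial\theta_j}\Bigr),
\end{equation*}
and then note that each term carries a factor $\partial q_s/\partial\theta_i$ or $\partial q_s/\partial\theta_j$, both of which vanish at $\mathbf{x}_0$ by the pivotal fact.

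Part (2) then follows from the standard expression for Christoffel symbols in local coordinates,
\begin{equation*}
\Gamma_{ij}^{k} \;=\; \tfrac{1}{2}\,g^{kl}\bigl(\partial_i g_{jl}+\partial_j g_{il}-\partial_l g_{ij}\bigr),
\end{equation*}
since part (3) forces every partial derivative on the right-hand side to vanish at $\mathbf{x}_0$. The proof is thus essentially a bookkeeping exercise once the pivotal fact is established. The only (mild) obstacle I anticipate is justifying cleanly that the Monge parametrization in (\ref{eqn:q0}) indeed produces $\nabla q_s(\mathbf{0})=0$; this must be treated as a consequence of how the graph is set up, not as an independent assumption, so I would make this observation explicit before carrying out the three calculations above.
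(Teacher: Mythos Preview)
Your proposal is correct and follows essentially the same approach as the paper: both arguments hinge on the observation that $\partial q_s/\partial\theta_k|_{\mathbf{x}_0}=0$ (because the tangent vectors $\partial\boldsymbol{q}/\partial\theta_k$ at $\mathbf{x}_0$ lie in $T_{\mathbf{x}_0}M$), then deduce (1) directly from (\ref{eqn:gij}), (3) by differentiating (\ref{eqn:gij}), and (2) from the standard Christoffel formula. The order and logic match the paper's proof almost exactly.
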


\begin{proof}
We notice that the embedding map (\ref{eqn:q0})\ restricted on the base
point should be $\mathbf{x}_{0}$, $\boldsymbol{q}|_{\mathbf{x}_{0}}=\mathbf{x%
}_{0}$, which implies that all  coordinates vanish at the base point,
that is, $\theta _{1}|_{\mathbf{x}_{0}}=\cdots =\theta _{d}|_{\mathbf{x}%
_{0}}=0$ and $q_{1}|_{\mathbf{x}_{0}}=\cdots =q_{n-d}|_{\mathbf{x}_{0}}=0$.
Also notice that the tangent vector bases (\ref{eqn:dq})\ should live in the
tangent space at the base point, which implies that the first-order
derivative$\frac{\partial q_{s}}{\partial \theta _{k}}|_{\mathbf{x}_{0}}=0$
for all $s=1,\ldots ,n-d$ and $k=1,\ldots ,d$. Then we obtain the property
for the metric $g_{ij}$:%
\begin{equation*}
g_{ij}(\mathbf{x}_{0})=\Big(\delta _{ij}+\sum_{s=1}^{n-d}\frac{\partial q_{s}%
}{\partial \theta _{i}}\frac{\partial q_{s}}{\partial \theta _{j}}\Big)%
\bigg\vert_{\mathbf{x}_{0}}=\delta _{ij},
\end{equation*}%
as well as for the inverse $g^{ij}(\mathbf{x}_{0})=\delta _{ij}$. The
derivative can be calculated as%
\begin{equation*}
\partial _{k}g_{ij}(\mathbf{x}_{0})=\Big(\sum_{s=1}^{n-d}\frac{\partial q_{s}%
}{\partial \theta _{i}\partial \theta _{k}}\frac{\partial q_{s}}{\partial
\theta _{j}}+\frac{\partial q_{s}}{\partial \theta _{i}}\frac{\partial q_{s}%
}{\partial \theta _{j}\partial \theta _{k}}\Big)\bigg\vert_{\mathbf{x}%
_{0}}=0.
\end{equation*}%
Thus, by definition, $\Gamma _{ij}^{k}=\frac{1}{2}g^{km}\left( \frac{%
\partial g_{jm}}{\partial \theta _{i}}+\frac{\partial g_{im}}{\partial
\theta _{j}}-\frac{\partial g_{ij}}{\partial \theta _{m}}\right) $,\ all the
Christoffel symbols vanish at the base point.
\end{proof}

In the next section, we will use these properties to simplify the calculation
of the Laplace-Beltrami operator.

\subsection{Approximation of the Laplace-Beltrami operator}

\label{sec:LBapp}

Our goal is to approximate the Laplace-Beltrami operator $\Delta _{M}$
acting on a function $f$ at the base point $\mathbf{x}_{0}$ by a linear
combination of the function values in the stencil $S_{\mathbf{x}_{0}}$,
i.e., find the coefficients $\left\{ w_{k}\right\} _{k=1}^{K}$ such that
\begin{equation}
\Delta _{M}f\left( \mathbf{x}_{0}\right) \approx \sum_{k=1}^{K}w_{k}f\left(
\mathbf{x}_{0,k}\right) .  \label{eqn:lincomb}
\end{equation}%
Arranging the coefficients at each point $\mathbf{x}_{0}\in \mathbf{X}%
_{M}\subset M$\ into each row of a sparse $N\times N$ matrix $\boldsymbol{L}_{\mathbf{X}_{M}}$, we can approximate $\Delta _{M}f$ at all points by $\boldsymbol{L}_{%
\mathbf{X}_M}\mathbf{f}$. We now provide the calculation details for the
coefficients $\left\{ w_{k}\right\} _{k=1}^{K}$.

The Laplace-Beltrami operator can be formulated in the local Monge
coordinate system as
\begin{equation*}
\Delta _{M}f=\sum_{i,j=1}^{d}\frac{1}{\sqrt{\left\vert g\right\vert }}\frac{%
\partial }{\partial \theta _{i}}\left( \sqrt{\left\vert g\right\vert }g^{ij}%
\frac{\partial f}{\partial \theta _{j}}\right) =\sum_{i,j=1}^{d}g^{ij}\left(
\frac{\partial ^{2}f}{\partial \theta _{i}\partial \theta _{j}}-\Gamma
_{ij}^{k}\frac{\partial f}{\partial \theta _{k}}\right) ,
\end{equation*}%
where $\left\vert g\right\vert =\det (g)$ is the determinant of the
Riemannian metric tensor and $d$ is the intrinsic dimension of manifold. The
two formulas are equivalent for the Laplace-Beltrami operator. Using
Proposition \ref{prop:norm}, the Laplace-Beltrami operator at the base point can be
simplified as
\begin{equation}
\Delta _{M}f\left( \mathbf{x}_{0}\right) =\sum_{i=1}^{d}\frac{\partial ^{2}}{%
\partial \theta _{i}^{2}}f\left( \mathbf{x}_{0}\right) :=\Delta _{%
\boldsymbol{\theta }}f\left( \mathbf{x}_{0}\right) .  \label{eqn:ddthe}
\end{equation}%
Here, we use $\Delta _{\boldsymbol{\theta }}$\ to denote the Laplace
operator in the local Monge coordinate chart, $\{U_{0};\boldsymbol{\theta }%
=(\theta _{1},\ldots ,\theta _{d})\},$ where $\boldsymbol{\theta }%
:U_{0}\subset M\rightarrow V_{0}\subset \mathbb{R}^{d}$ is a diffeomorphism
onto an open subset of $\mathbb{R}^{d}$. The local representation of $f$\
becomes $f\circ \boldsymbol{\theta }^{-1}:\boldsymbol{\theta }\left(
U_{0}\right) \rightarrow \mathbb{R}$. Indeed, for gRBF-FD, the linear
combination of multivariate polynomials and PHS functions in the interpolant
(\ref{eqn:GPRFD_inerpolant}) approximates the local representation $f\circ
\boldsymbol{\theta }^{-1}$ in the local Monge coordinate chart (on the
tangent plane).

Then the Laplace-Beltrami operator at the base point can be approximated
using the gRBF-FD method,
\begin{equation}
\Delta _{M}f\left( \mathbf{x}_{0}\right) \approx \Delta _{\boldsymbol{\theta
}}(\mathcal{I}_{\phi p}\mathbf{f}_{{\mathbf{x}_{0}}})\left( \mathbf{x}%
_{0}\right) =\left. \left( \sum_{k=1}^{K}a_{k}\Delta _{\boldsymbol{\theta }%
}\phi \left( \Vert \boldsymbol{\theta }\left( \mathbf{x}\right) -\boldsymbol{%
\theta }\left( \mathbf{x}_{0,k}\right) \Vert \right)
+\sum_{s=1}^{m}b_{s}\Delta _{\boldsymbol{\theta }}p_{\boldsymbol{\alpha }%
(s)}\left( \boldsymbol{\theta }\left( \mathbf{x}\right) \right) \right)
\right\vert _{\mathbf{x}=\mathbf{x}_{0}},  \label{eqn:LB_intr}
\end{equation}%
where (\ref{eqn:GPRFD_inerpolant}) has been used. Here, the first term in (%
\ref{eqn:LB_intr}) can be calculated as
\begin{eqnarray}
\Delta _{\boldsymbol{\theta }}\phi \left( \Vert \boldsymbol{\theta }\left(
\mathbf{x}\right) -\boldsymbol{\theta }\left( \mathbf{x}_{0,k}\right) \Vert
\right) \big|_{\mathbf{x}=\mathbf{x}_{0}} &=&\sum_{i=1}^{d}\frac{\partial }{%
\partial \theta _{i}}\left[ \left( 2\kappa +1\right) \Vert \boldsymbol{%
\theta }\left( \mathbf{x}\right) -\boldsymbol{\theta }\left( \mathbf{x}%
_{0,k}\right) \Vert ^{2\kappa -1}\left( \theta _{i}\left( \mathbf{x}\right)
-\theta _{i}\left( \mathbf{x}_{0,k}\right) \right) \right]  \notag \\
&=&\left( 4\kappa ^{2}+2d\kappa +d-1\right) \Vert \boldsymbol{\theta }\left(
\mathbf{x}_{0}\right) -\boldsymbol{\theta }\left( \mathbf{x}_{0,k}\right)
\Vert ^{2\kappa -1},  \label{eqn:Lphi}
\end{eqnarray}%
where the PHS $\phi \left( \Vert \boldsymbol{\theta }\left( \mathbf{x}%
\right) -\boldsymbol{\theta }\left( \mathbf{x}_{0,k}\right) \Vert \right)
=\Vert \boldsymbol{\theta }\left( \mathbf{x}\right) -\boldsymbol{\theta }%
\left( \mathbf{x}_{0,k}\right) \Vert ^{2\kappa +1}$ has been used. The
second term in (\ref{eqn:LB_intr}) can be calculated as%
\begin{equation}
\Delta _{\boldsymbol{\theta }}p_{\boldsymbol{\alpha }(s)}\left( \boldsymbol{%
\theta }\left( \mathbf{x}\right) \right) \big|_{\mathbf{x}=\mathbf{x}%
_{0}}=\left\{
\begin{array}{ll}
2, & \boldsymbol{\alpha }(s)\in \mathbf{E,} \\
0, & \boldsymbol{\alpha }(s)\notin \mathbf{E,}%
\end{array}%
\right.  \label{eqn:2}
\end{equation}%
where the index set $\mathbf{E}=\left\{ \left( \alpha _{1},\cdots ,\alpha
_{d}\right) \in \mathbb{N}^{d}|\text{each }\alpha _{i}\in \left\{
0,2\right\} \text{ }(i=1,\ldots ,d)\text{ and }\left\vert \boldsymbol{\alpha
}\right\vert =\sum_{j=1}^{d}\alpha _{j}=2\right\} $. In fact, for the index
in $\mathbf{E}$, only one entry $\alpha _{i}=2$ for some $i\in \left\{
1,\ldots ,d\right\} $ and all the others are zero. Note that $%
\mathbf{E}$ contains $d$ number of indices. Substituting (\ref{eqn:Lphi})
and (\ref{eqn:2}) into (\ref{eqn:LB_intr}), we arrive at
\begin{eqnarray}
\Delta _{M}f\left( \mathbf{x}_{0}\right) &\approx &\sum_{k=1}^{K}a_{k}\left(
4\kappa ^{2}+2d\kappa +d-1\right) \Vert \boldsymbol{\theta }\left( \mathbf{x}%
_{0}\right) -\boldsymbol{\theta }\left( \mathbf{x}_{0,k}\right) \Vert
^{2\kappa -1}+\sum_{\left\{ s|\boldsymbol{\alpha }(s)\in \mathbf{E}\right\}
}2b_{s}  \notag \\
&:= &\left( \Delta _{\boldsymbol{\theta }}\boldsymbol{\phi }_{\mathbf{x}%
_{0}}\right) \mathbf{a}+\left( \Delta _{\boldsymbol{\theta }}\boldsymbol{p}_{%
\mathbf{x}_{0}}\right) \mathbf{b},  \label{eqn:GPRFD_LB}
\end{eqnarray}%
where $\Delta _{\boldsymbol{\theta }}\boldsymbol{\phi }_{\mathbf{x}_{0}}\in
\mathbb{R}^{1\times K}$ with the $k$th entry given by (\ref{eqn:Lphi}) and $%
\Delta _{\boldsymbol{\theta }}\boldsymbol{p}_{\mathbf{x}_{0}}\in \mathbb{R}%
^{1\times m}$ with $d$ number of 2 and $m-d$ number of 0. Substituting $%
\mathbf{a}$ (equation \eqref{eqn:a}) and $\mathbf{b}$\ (equation %
\eqref{eqn:b}) into above \eqref{eqn:GPRFD_LB}, we obtain the
coefficients in (\ref{eqn:lincomb}):
\begin{equation}
\Delta _{M}f\left( \mathbf{x}_{0}\right) \approx \sum_{k=1}^{K}w_{k}f\left(
\mathbf{x}_{0,k}\right) =\left( \left( \Delta _{\boldsymbol{\theta }}%
\boldsymbol{\phi }_{\mathbf{x}_{0}}\right) \boldsymbol{\Phi }^{-1}\big(%
\mathbf{I}-\boldsymbol{P}\left( \boldsymbol{P}^{\top }\boldsymbol{\Lambda P}%
\right) ^{-1}\boldsymbol{P}^{\top }\boldsymbol{\Lambda }\big)+\left( \Delta
_{\boldsymbol{\theta }}\boldsymbol{p}_{\mathbf{x}_{0}}\right) \left(
\boldsymbol{P}^{\top }\boldsymbol{\Lambda P}\right) ^{-1}\boldsymbol{P}%
^{\top }\boldsymbol{\Lambda }\right) \mathbf{f}_{\mathbf{x}_{0}}.
\label{eqn:LB_weight}
\end{equation}



\begin{rem}\label{rem:35}
Here we do not employ  PHS functions defined in Euclidean coordinates due to
additional computational cost required to evaluate their derivatives and the
Laplace-Beltrami operator. In particular, let $r_{\mathbf{x}%
,k}\left( \mathbf{x}\right) =\Vert \mathbf{x}-\mathbf{x}_{0,k}\Vert $ be the
radial distance function defined for $\mathbf{x}\in \mathbb{R}^{n}$ and then
we compute the first-order derivative of the PHS function as%
\begin{equation*}
\frac{\partial }{\partial \theta _{j}}\phi \left( \Vert \mathbf{x}-\mathbf{x}%
_{0,k}\Vert \right) =\frac{\partial }{\partial \theta _{j}}\left( r_{\mathbf{%
x},k}^{2}\left( \mathbf{x}\right) \right) ^{\frac{2\kappa +1}{2}}=\frac{%
2\kappa +1}{2}(r_{\mathbf{x},k}\left( \mathbf{x}\right) )^{2\kappa -1}\frac{%
\partial r_{\mathbf{x},k}^{2}\left( \mathbf{x}\right) }{\partial \theta _{j}}%
.
\end{equation*}%
Using the embedding map (\ref{eqn:q0}), we can calculate the derivative of
the square of the radial distance by%
\begin{eqnarray*}
\frac{\partial r_{\mathbf{x},k}^{2}\left( \mathbf{x}\right) }{\partial
\theta _{j}} &=&\frac{\partial }{\partial \theta _{j}}\left[ \sum_{i=1}^{d}%
\big(\theta _{i}-\theta _{i}(\mathbf{x}_{0,k})\big)^{2}+\sum_{s=1}^{n-d}\big(%
q_{s}-q_{s}(\mathbf{x}_{0,k})\big)^{2}\right] \\
&=&2\big(\theta _{j}-\theta _{j}(\mathbf{x}_{0,k})\big)+2\sum_{s=1}^{n-d}%
\big(q_{s}-q_{s}(\mathbf{x}_{0,k})\big)\frac{\partial q_{s}}{\partial \theta
_{j}},
\end{eqnarray*}%
where we see that the calculation involves many derivatives of normal
components $q_{s}$\ with respect to tangent components $\theta _{j}$ especially when
the ambient dimension $n$ is large. In contrast, as seen from the first line
of (\ref{eqn:Lphi}) for the PHS function defined over the tangent plane, the
first-order derivative does not contain any term of $\frac{\partial q_{s}}{%
\partial \theta _{j}}$ for $s=1,\ldots ,n-d$ and $j=1,\ldots ,d$, which  simplifies our computation.
\end{rem}

\section{Application to solving screened Poisson problems}

\label{sec:tunek}

\subsection{Screened Poisson problems on closed manifolds}

In this section, we will show an application of the proposed two-step
gRBF-FD discretization method to solve screened Poisson problems on closed
manifolds. For a closed manifold $M$, we consider the following screened Poisson
problem,
\begin{equation}
\left( a-\Delta _{M}\right) f=h,\quad \mathbf{x}\in M,  \label{eqn:poisson}
\end{equation}%
where $a>0$ and $h$ are defined such that the problem is well-posed. Here,
we set the parameter $a=1$ for simplicity. Numerically, we will approximate
the solution to the PDE problem (\ref{eqn:poisson}) pointwisely on the
scattered dataset $\mathbf{X}_{M}$, solving an $N\times N$ linear algebra
system%
\begin{equation}
\boldsymbol{L}_{\mathbf{X}_{M},\boldsymbol{I}}\mathbf{F}:=(\boldsymbol{I}-%
\boldsymbol{L}_{\mathbf{X}_{M}})\mathbf{F}=\mathbf{h},  \label{eqn:nums}
\end{equation}%
for $\mathbf{F}\in \mathbb{R}^{N\times 1}$ to approximate the true solution $%
\mathbf{f}:=(f(\mathbf{x}_{1}),\ldots ,f(\mathbf{x}_{N}))^{\top }\in \mathbb{%
R}^{N\times 1}$. Here, $\mathbf{h}:=(h(\mathbf{x}_{1}),\ldots ,h(\mathbf{x}%
_{N}))^{\top }\in \mathbb{R}^{N\times 1}$ and $\boldsymbol{L}_{\mathbf{X}%
_{M}}\in \mathbb{R}^{N\times N}$ is a sparse Laplacian matrix constructed
from a discretization scheme, such as GMLS and gRBF-FD. To verify
the convergence, we define the forward error (\textbf{FE}) for {the} operator
approximation and the inverse error (\textbf{IE}) for {the} solution approximation
as
\begin{equation}
\mathbf{FE}=\max_{1\leq i\leq N}|\Delta _{M}f\left( \mathbf{x}_{i}\right)
-\left( \boldsymbol{L}_{\mathbf{X}_{M}}\mathbf{f}\right)
_{i}|,\mathrm{\ \ \ \ }\mathbf{IE}=\max_{1\leq i\leq N}|\mathbf{F}_i-{f}(\mathbf{x}_i)%
|,  \notag
\end{equation}%
where $\boldsymbol{L}_{\mathbf{X}_{M}}\mathbf{f}\in \mathbb{R}%
^{N\times 1}$, $\left( \boldsymbol{L}_{\mathbf{X}_{M}}%
\mathbf{f}\right) _{i}$ is the $i$th component to approximate the value of $%
\Delta _{M}f$ at $\mathbf{x}_{i}$, and $\mathbf{F}_i$ is the $i$th component of the numerical solution $\mathbf{F}$.

In order to numerically verify the validity of operator approximation,
stability and convergence, we test on a  one-dimensional full ellipse
example in this section.

\begin{example}
Consider a 1D full ellipse in $\mathbb{R}^{2}$ parametrized by
\begin{equation}
\mathbf{x}:=\left( x^{1},x^{2}\right) =\left( \cos \theta ,2\sin \theta
\right) \in M\subset \mathbb{R}^{2},  \label{eqn:ellp}
\end{equation}%
defined with the Riemannian metric $g=\sin ^{2}\theta +4\cos ^{2}\theta$ for $0\leq \theta <2\pi $. The true solution is set to be $f=\sin
\theta \cos \theta $ and the right-hand-side $h:=\left( 1-\Delta _{M}\right)
f$ can be calculated. Then, we approximate the numerical solution $\mathbf{F}
$\ in (\ref{eqn:nums}) for the PDE problem in (\ref{eqn:poisson}), subjected
to the manufactured $h$. Numerically, the convergence is examined on the
ellipse using the number $N=\left[ 400,800,1600,3200,6400\right] $ of
dataset. Here, two types of dataset are considered for convergence
examination: \newline
\noindent \textbullet\ \textit{Well-sampled data}. The grid points are well-ordered and
all consecutive points have equal intrinsic distance in $\theta $ space (see
Fig.~\ref{fig:PRFD_1}(a)). \newline
\noindent \textbullet\ \textit{Randomly sampled data}. The data $\left\{ \theta
_{i}\right\} _{i=1}^{N}$ are  randomly generated from an independent and identical distribution
with density $p(\theta) = \theta/(4\pi^2) + 1/(4\pi)$ on $[0,2\pi )$ in the intrinsic coordinate and then mapped to the point
cloud $\left\{ \mathbf{x}_{i}\right\} _{i=1}^{N}$ via the parametrization (%
\ref{eqn:ellp}) (see Fig.~\ref{fig:PRFD_1}(d)).
\end{example}


\subsection{Normalization of Monge coordinates}

\label{sec:nmmg} In our numerical implementation, several technical details
need to be addressed to ensure the stability and convergence using the
gRBF-FD approach. First, we need to normalize the Monge coordinates since
the PHS function is  sensitive to changes in the magnitude of the input
radial distance.

\begin{definition}
\label{def:srad} For a stencil $S_{\mathbf{x}_{0}}$ centered at the base
point $\mathbf{x}_{0} $, its diameter and radius are defined as
\begin{equation*}
D_{K,\max }({\mathbf{x}_{0}}):=\max_{1\leq i,j\leq K}\Vert \boldsymbol{%
\theta }\left( \mathbf{x}_{0,i}\right) -\boldsymbol{\theta }\left( \mathbf{x}%
_{0,j}\right) \Vert ,\text{ \ \ }R_{K,\max }({\mathbf{x}_{0}}):=\max_{1\leq
k\leq K}\Vert \boldsymbol{\theta }\left( \mathbf{x}_{0,k}\right) -%
\boldsymbol{\theta }\left( \mathbf{x}_{0}\right) \Vert ,
\end{equation*}%
respectively, over the projected tangent space of $\mathbf{x}_{0}$.
\end{definition}

To compute multivariate polynomials and PHS functions, we normalize the
Monge coordinates $\boldsymbol{\theta }\left( \mathbf{x}\right) $ and the
radial distances $r_{\boldsymbol{\theta },k}\left( \mathbf{x}\right) :=\Vert
\boldsymbol{\theta }\left( \mathbf{x}\right) -\boldsymbol{\theta }\left(
\mathbf{x}_{0,k}\right) \Vert $ in the tangent plane by the stencil diameter
as follows,%
\begin{equation}
\boldsymbol{\tilde{\theta}}\left( \mathbf{x}\right) =(\tilde{\theta}_{1}(%
\mathbf{x}),\ldots ,\tilde{\theta}_{d}(\mathbf{x})),\text{ \ }\tilde{r}_{%
\boldsymbol{\theta },k}\left( \mathbf{x}\right) =\Vert \boldsymbol{\tilde{%
\theta}}\left( \mathbf{x}\right) -\boldsymbol{\tilde{\theta}}\left( \mathbf{x%
}_{0,k}\right) \Vert ,\text{ \ with }\tilde{\theta}_{i}(\mathbf{x})=\frac{%
\theta _{i}(\mathbf{x})}{D_{K,\max }}=\frac{\boldsymbol{t}_{i}({\mathbf{x}%
_{0}})\cdot (\mathbf{x}-{\mathbf{x}_{0}})}{D_{K,\max }},\label{eqn:thetd}
\end{equation}%
where the normalized quantities are scaled to lie in the interval $[0,1]$,
that is, $0\leq \Vert \boldsymbol{\tilde{\theta}}\left( \mathbf{x}\right)
\Vert =\Vert \boldsymbol{\theta }\left( \mathbf{x}\right) /D_{K,\max }\Vert
\leq 1$ and $0\leq \tilde{r}_{\boldsymbol{\theta },k}\left( \mathbf{x}%
\right) =r_{\boldsymbol{\theta },k}\left( \mathbf{x}\right) /D_{K,\max }\leq
1$. Here we use the shorthand $D_{K,\max }$ for $D_{K,\max }(\mathbf{x}_{0})$%
. Then the coefficients in (\ref{eqn:LB_weight}) for approximating the
Laplace-Beltrami operator become
\begin{equation}
\Delta _{M}f\left( \mathbf{x}_{0}\right) \approx \frac{1}{D_{K,\max }^{2}}%
\left( \left( \Delta _{\boldsymbol{\tilde{\theta}}}\boldsymbol{\tilde{\phi}}%
_{\mathbf{x}_{0}}\right) \boldsymbol{\tilde{\Phi}}^{-1}\big(\mathbf{I}-%
\boldsymbol{\tilde{P}}\left( \boldsymbol{\tilde{P}}^{\top }\boldsymbol{%
\Lambda \tilde{P}}\right) ^{-1}\boldsymbol{\tilde{P}}^{\top }\boldsymbol{%
\Lambda }\big)+\left( \Delta _{\boldsymbol{\tilde{\theta}}}\boldsymbol{%
\tilde{p}}_{\mathbf{x}_{0}}\right) \left( \boldsymbol{\tilde{P}}^{\top }%
\boldsymbol{\Lambda \tilde{P}}\right) ^{-1}\boldsymbol{\tilde{P}}^{\top }%
\boldsymbol{\Lambda }\right) \mathbf{f}_{\mathbf{x}_{0}},
\label{eqn:normMon}
\end{equation}%
where $\boldsymbol{\tilde{P}}\in \mathbb{R}^{K\times m}$, $\boldsymbol{%
\tilde{\Phi}}\in \mathbb{R}^{K\times K}$, $\Delta _{\boldsymbol{\tilde{\theta%
}}}\boldsymbol{\tilde{p}}_{\mathbf{x}_{0}}\in \mathbb{R}^{1\times m}$ and $%
\Delta _{\boldsymbol{\tilde{\theta}}}\boldsymbol{\tilde{\phi}}_{\mathbf{x}%
_{0}}\in \mathbb{R}^{1\times K}$ are all defined over the normalized Monge
coordinates with components%
\begin{equation}
\begin{array}{ll}
\boldsymbol{\tilde{P}}_{kj}=p_{\boldsymbol{\alpha }(j)}\left( \boldsymbol{%
\tilde{\theta}}\left( \mathbf{x}{_{0,k}}\right) \right) ,\text{ \ \ } &
[\Delta _{\boldsymbol{\tilde{\theta}}}\boldsymbol{\tilde{p}}_{\mathbf{x}%
_{0}}]_{j}=[\Delta _{\boldsymbol{\theta }}\boldsymbol{p}_{\mathbf{x}%
_{0}}]_{j}=\left\{
\begin{array}{ll}
2, & \boldsymbol{\alpha }(j)\in \mathbf{E} \\
0, & \boldsymbol{\alpha }(j)\notin \mathbf{E}%
\end{array}%
\right. ,\text{ \ } \\
\boldsymbol{\tilde{\Phi}}_{ks}=\phi \left( \Vert \boldsymbol{\tilde{\theta}}%
\left( \mathbf{x}_{0,k}\right) -\boldsymbol{\tilde{\theta}}\left( \mathbf{x}%
_{0,s}\right) \Vert \right) ,\text{ \ \ } & [\Delta _{\boldsymbol{\tilde{%
\theta}}}\boldsymbol{\tilde{\phi}}_{\mathbf{x}_{0}}]_{k}=\left( 4\kappa
^{2}+2d\kappa +d-1\right) \Vert \boldsymbol{\tilde{\theta}}\left( \mathbf{x}%
_{0}\right) -\boldsymbol{\tilde{\theta}}\left( \mathbf{x}_{0,k}\right) \Vert
^{2\kappa -1},%
\end{array}
\label{eqn:Phitd}
\end{equation}%
for $j=1,\ldots ,m$ and $k,s=1,\ldots ,K$.


\subsection{Weighted ridge regression for the inverse of the PHS matrix}

\label{sec:wtrg} Since the PHS matrix $\boldsymbol{\tilde{\Phi}}$
might be singular, we use a weighted ridge regression or Tikhonov
regularization for gRBF-FD to compute the inverse $\boldsymbol{\tilde{\Phi}}%
^{-1}$ in (\ref{eqn:normMon}),%
\begin{equation}
\underset{{\mathbf{\boldsymbol{\tilde{a}}\in }}\mathbb{R}^{K}}{\min }\frac{1%
}{2}\left( \mathbf{s}_{{\mathbf{x}_{0}}}-\boldsymbol{\tilde{\Phi}\mathbf{%
\tilde{a}}}\right) ^{\top }\boldsymbol{\Lambda }_{K}\left( \mathbf{s}_{{%
\mathbf{x}_{0}}}-\boldsymbol{\tilde{\Phi}\mathbf{\tilde{a}}}\right) +\frac{1%
}{2}\delta ^{2}\Vert \boldsymbol{\mathbf{\tilde{a}}}\Vert ^{2},
\label{eqn:opphs}
\end{equation}%
where $\boldsymbol{\Lambda }_{K}$ is the diagonal $1/K$ weight function
defined in (\ref{eqn:k_weight})\ and $\delta $ is the regularization
parameter. For the gRBF-FD method, the inverse $\boldsymbol{\tilde{\Phi}}%
^{-1}$\ in (\ref{eqn:normMon}) can be found by solving the normal equation
to be,
\begin{equation}
\boldsymbol{\tilde{\Phi}}^{-1}:=\boldsymbol{\tilde{\Phi}}_{\boldsymbol{%
\Lambda }_{K}}^{\dagger }=\left( \boldsymbol{\tilde{\Phi}}^{\top }%
\boldsymbol{\Lambda }_{K}\boldsymbol{\tilde{\Phi}}+\delta ^{2}\mathbf{I}%
\right) ^{-1}\boldsymbol{\tilde{\Phi}}^{\top }\boldsymbol{\Lambda }_{K},
\label{eqn:PhiIn}
\end{equation}%
Here, we set $\delta =10^{-6}$ for a stable and unique inverse of $%
\boldsymbol{\tilde{\Phi}}$. Notably, we emphasize that the choice of $%
\boldsymbol{\Lambda }_{K}$ is very important for the stability of the
discretized Laplacian matrix as can be seen from Figs.~\ref{fig:PRFD_1} and %
\ref{fig:PRFD_2}. The weight $\boldsymbol{\Lambda }_{K}$ in this
optimization problem (\ref{eqn:opphs}) stabilizes the Laplacian analogous to
its role in the GMLS problem in (\ref{eqn:LS_optm}) (see e.g., \cite%
{liang2013solving,jiang2024generalized} for the numerical performance).



\begin{figure*}[htbp]
\centering
\begin{tabular}{ccc}
{(a) well-sampled data} & {(b) coefficients, well-sampled } & {(c) $\Vert
\boldsymbol{L}^{-1}_{\mathbf{X}_M,\boldsymbol{I}} \Vert _\infty$,
well-sampled} \\
\includegraphics[width=1.8
				in, height=1.5 in]{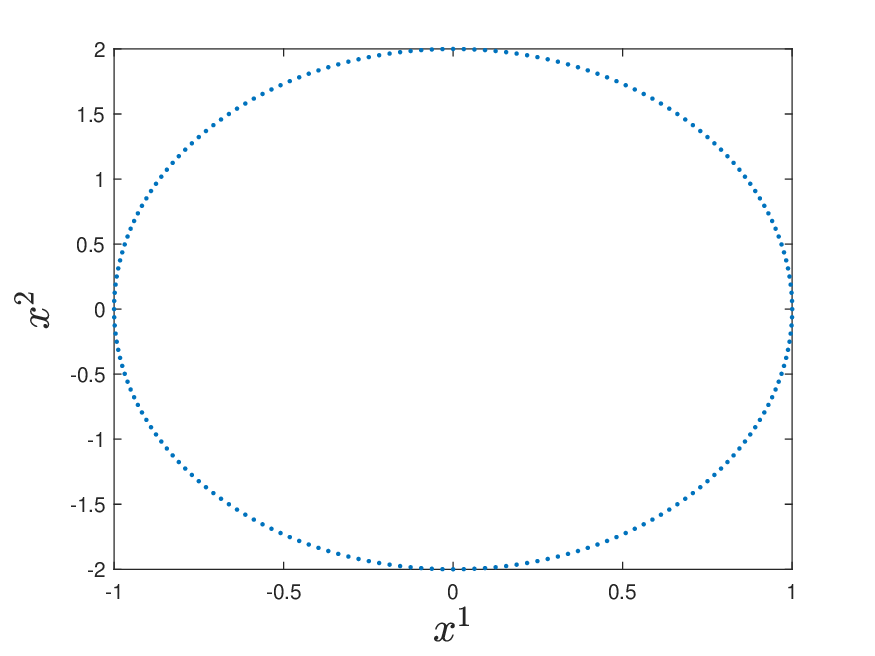} &
\includegraphics[width=2.1
				in, height=1.5 in]{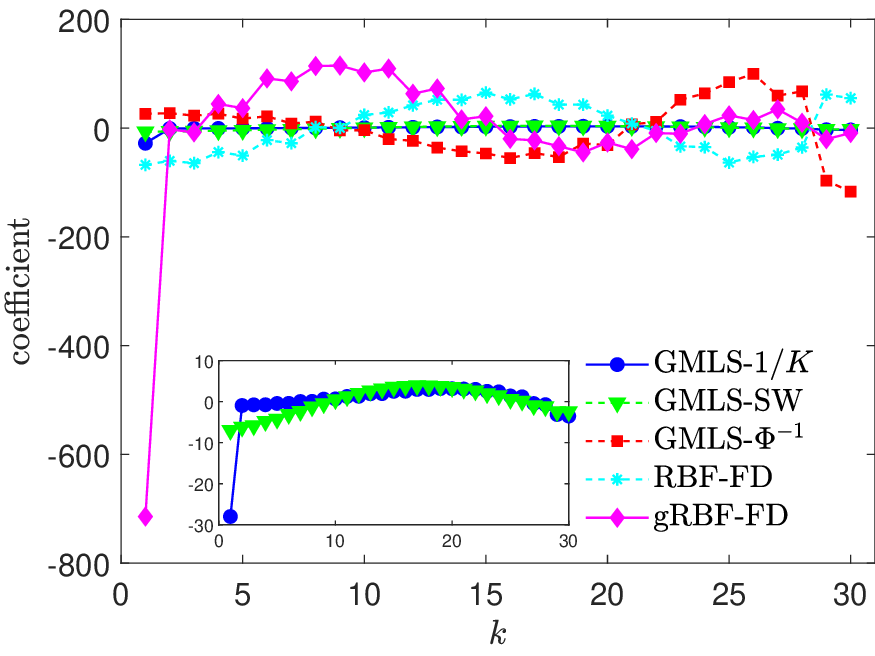} &
\includegraphics[width=2.1
				in, height=1.51 in]{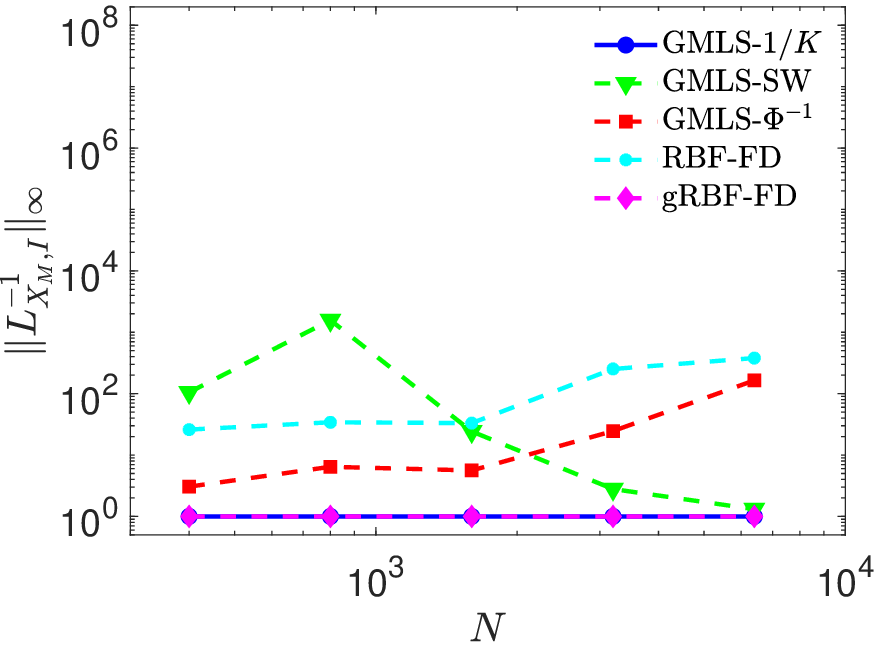} \\
{(d) random data} & {(e) coefficients, random } & {(f) $%
\Vert \boldsymbol{L}^{-1}_{\mathbf{X}_M,\boldsymbol{I}} \Vert _\infty$,
random } \\
\includegraphics[width=1.8
				in, height=1.5 in]{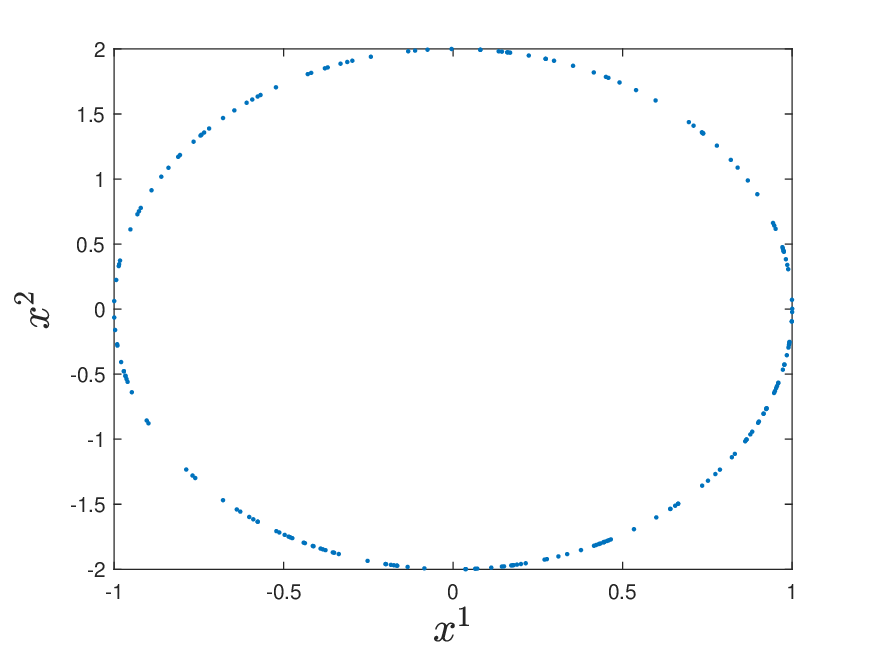} &
\includegraphics[width=2.1
				in, height=1.5 in]{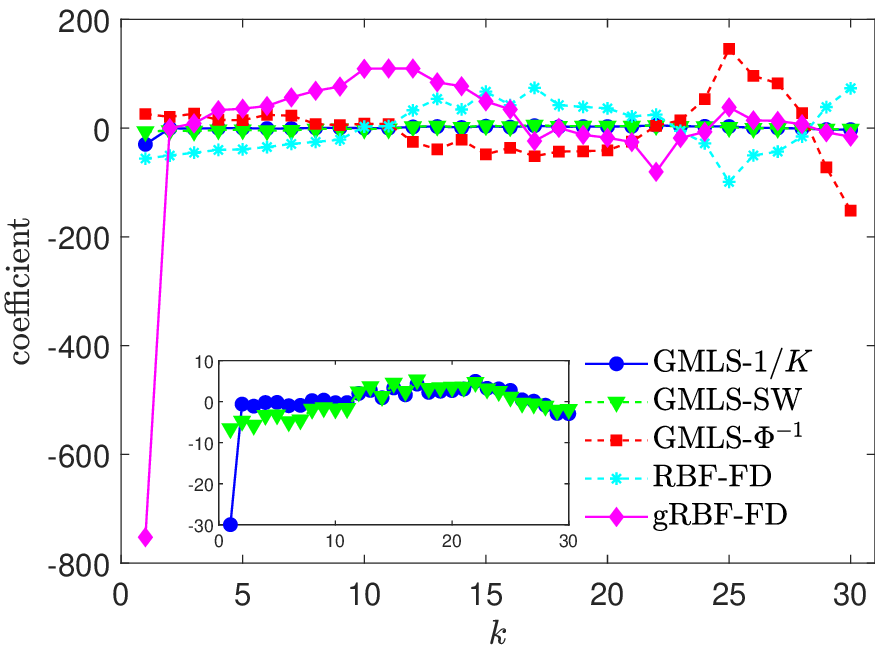} &
\includegraphics[width=2.1
				in, height=1.51 in]{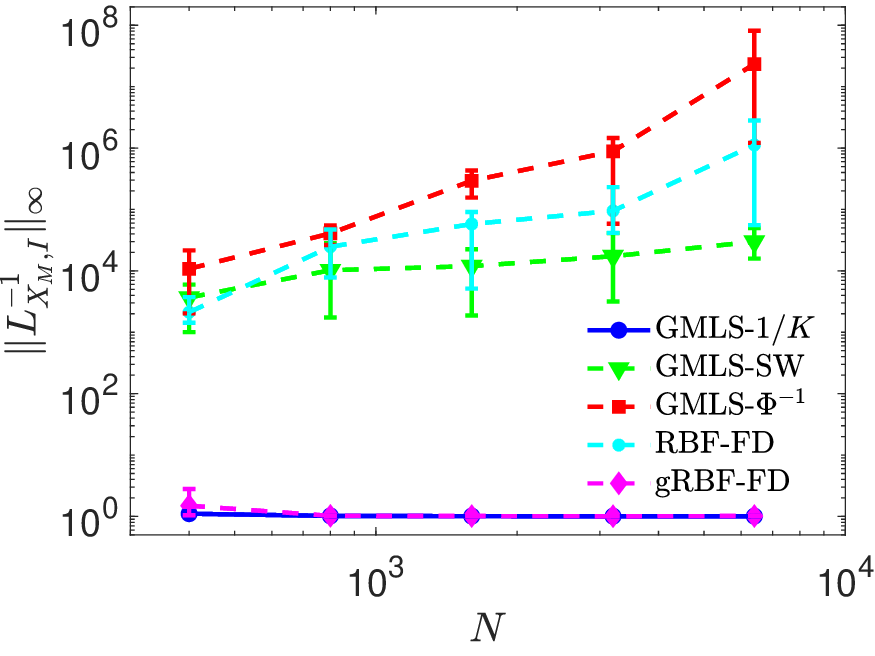}%
\end{tabular}%
\caption{\textbf{1D ellipse in} $\mathbb{R}^2$. Comparison among GMLS (with
three different weights), RBF-FD and gRBF-FD. The upper panels are
well-sampled data and the bottom panels are random data. The left(a)(d),
middle(b)(e), right(c)(f) columns correspond to the scattered dataset,
coefficient $w_k$ vs. $k$th nearest neighbor, $\Vert \boldsymbol{L}_{\mathbf{%
X}_{M},\boldsymbol{I}}^{-1}\Vert _{\infty }$ vs. $N$, respectively. We fix $%
K=30$, polynomial degree $4$ and PHS parameter $\protect\kappa=3$. }
\label{fig:PRFD_1}
\end{figure*}

For comparison, we choose three GMLS methods and two types of RBF-FD
methods. The Laplace-Beltrami operator using GMLS methods can be calculated
by
\begin{equation}
\Delta _{M}f\left( \mathbf{x}_{0}\right) \approx \frac{1}{D_{K,\max }^{2}}%
\left( \left( \Delta _{\boldsymbol{\tilde{\theta}}}\boldsymbol{\tilde{p}}_{%
\mathbf{x}_{0}}\right) \left( \boldsymbol{\tilde{P}}^{\top }\boldsymbol{%
\Lambda \tilde{P}}\right) ^{-1}\boldsymbol{\tilde{P}}^{\top }\boldsymbol{%
\Lambda }\right) \mathbf{f}_{\mathbf{x}_{0}},  \label{eqn:gmLB}
\end{equation}%
which corresponds to the GMLS part of equation (\ref{eqn:normMon}). One may
refer to \cite{liang2013solving,gross2020meshfree,jones2023generalized} for
more calculation details. GMLS methods can be applied with three different
weight functions:

\noindent \textbullet\ GMLS-$1/K$. The first weight function is the diagonal
$1/K$  as discussed in equation (\ref{eqn:k_weight}) in Remark \ref%
{rem:1oK}. \newline
\noindent \textbullet\ GMLS-SW. The second is a diagonal smooth weight
function with diagonal entries $\lambda _{kk}=\left( 1-\frac{r_{k}}{R}%
\right) ^{2}$, where $r_{k}$ is defined as $r_{k}=\Vert \boldsymbol{\theta }%
\left( \mathbf{x}_{0,k}\right) -\boldsymbol{\theta }\left( \mathbf{x}%
_{0}\right) \Vert $ and $R=1.5R_{K,\max }$, where $R_{K,\max }$ is the
stencil radius. \newline
\noindent \textbullet\ GMLS-$\boldsymbol{\Phi }^{-1}$. For the third one, we
take the weight $\boldsymbol{\Lambda }$ to be the inverse of the symmetric
PHS matrix, that is, $\boldsymbol{\tilde{\Phi}}^{-1}$ for invertible $%
\boldsymbol{\tilde{\Phi}}$\ and $\boldsymbol{\tilde{\Phi}}_{\mathbf{I}%
}^{\dagger }:=\left( \boldsymbol{\tilde{\Phi}}^{\top }\boldsymbol{\tilde{\Phi%
}}+\delta ^{2}\mathbf{I}\right) ^{-1}\boldsymbol{\tilde{\Phi}}^{\top }$ for
singular $\boldsymbol{\tilde{\Phi}}$. Here $\boldsymbol{\tilde{\Phi}}$ is
defined in (\ref{eqn:Phitd}) and the $\delta ^{2}\mathbf{I}$ term is used
for the uniqueness of the inverse of the singular $\boldsymbol{\tilde{\Phi}}$%
. Indeed, this set of Laplacian coefficients corresponds to the GMLS part of
the RBF-FD coefficients as can be derived from equation (\ref{eqn:grbf1}).

While there are many gRBF-FD methods depending on the choice of weight
functions and radial basis functions, we here only consider the following
two typical gRBF-FD methods.

\noindent \textbullet\ RBF-FD. The RBF-FD approach follows from the review
in Section \ref{sec:PRFD} and references in \cite{shaw2019radial,jones2023generalized}.
The Laplacian coefficients can be constructed from equation (\ref%
{eqn:normMon}) by substituting
$\boldsymbol{\Lambda }$  with the inverse of the PHS matrix, $\boldsymbol{\tilde{\Phi}}^{-1}$. When $%
\boldsymbol{\tilde{\Phi}}$ is invertible, its inverse can be
calculated directly by $\boldsymbol{\tilde{\Phi}}^{-1}$.
When $\boldsymbol{\tilde{\Phi%
}}$ is singular, its inverse can be approximated either from a
pseudo-inverse based on its singular values or from a
ridge regression $\boldsymbol{\tilde{\Phi}}_{\mathbf{I}}^{\dagger }:=\left(
\boldsymbol{\tilde{\Phi}}^{\top }\boldsymbol{\tilde{\Phi}}+\delta ^{2}%
\mathbf{I}\right) ^{-1}\boldsymbol{\tilde{\Phi}}^{\top }$. Here we simply
employ a common technique to approximate the inverse for singular $%
\boldsymbol{\tilde{\Phi}}$. \newline
\noindent \textbullet\ gRBF-FD using the $1/K$ weight. The Laplacian coefficients are
calculated exactly following equation (\ref{eqn:normMon}) with the $1/K$
weight $\boldsymbol{\Lambda } = \boldsymbol{\Lambda }_{K}$ in (\ref{eqn:k_weight}) and the specific
inverse $\boldsymbol{\tilde{\Phi}}^{-1}=\boldsymbol{\tilde{\Phi}}_{%
\boldsymbol{\Lambda }_{K}}^{\dagger }$\ in (\ref{eqn:PhiIn}). The
difference between RBF-FD and gRBF-FD lies in the usage of the weight function.
In particular, for gRBF-FD using the $1/K$ weight,
we allocate more weights to the base point than all other neighboring points
in the first GMLS regression step as well as in the second PHS interpolation step.


Figures \ref{fig:PRFD_1}(b)(e) display the Laplacian coefficient $w_{k}$\ as
a function of the $k$th nearest neighbor for a fixed stencil. It can be seen
that for GMLS-$1/K$ (blue) and gRBF-FD (magenta), the base point coefficient $|w_{1}|$
is significantly larger than all other neighboring coefficients $%
|w_{k}|$ ($k=2,\ldots ,K$). We refer to this characteristic as the \textit{central spike
pattern} of the coefficients $w_{k}$). However, for GMLS-SW (green), GMLS-$\boldsymbol{\Phi }%
^{-1}$ (red), RBF-FD (cyan), their coefficients are nearly uniform within a
certain range across the stencil, with no central spike pattern.

Figures \ref{fig:PRFD_1}(c) and \ref{fig:PRFD_1}(f) display the infinity
norm of the inverse of the Laplacian matrix, $\Vert \boldsymbol{L}_{\mathbf{X%
}_{M},\boldsymbol{I}}^{-1}\Vert _{\infty }$, for well-sampled data and
randomly-sampled data, respectively. Indeed, the quantity $\Vert \boldsymbol{%
L}_{\mathbf{X}_{M},\boldsymbol{I}}^{-1}\Vert _{\infty }$ reveals the
stability of the Laplacian matrix (see e.g., \cite%
{ahlberg1963,varah1975,gh2019,jiang2024generalized}). As shown in Fig. %
\ref{fig:PRFD_1}(c), for well-sampled data, all values of $\Vert
\boldsymbol{L}_{\mathbf{X}_{M},\boldsymbol{I}}^{-1}\Vert _{\infty }$ are
bounded above by a constant, which implies stability; in contrast,  the values of GMLS-SW (green), GMLS-$\boldsymbol{\Phi }%
^{-1} $ (red), and RBF-FD (cyan) are significantly larger than those of GMLS-$1/K$ (blue)
and gRBF-FD (magenta).
However, for randomly-sampled data (Fig. \ref{fig:PRFD_1}(f)), the values of $\Vert \boldsymbol{L}_{\mathbf{X}_{M},%
\boldsymbol{I}}^{-1}\Vert _{\infty }$ for GMLS-$1/K$ (blue) and gRBF-FD (magenta) remain a small constant; whereas those of the other three methods increase with $N$, potentially leading to their instability.


Moreover, a close connection can be observed between the
pattern of coefficients $w_{k}$ and the value of $\Vert \boldsymbol{L}_{%
\mathbf{X}_{M},\boldsymbol{I}}^{-1}\Vert _{\infty }$. In particular,  the coefficients for GMLS-$1/K$ and gRBF-FD exhibit the central spike pattern, resulting in
a small, constant $\Vert \boldsymbol{L}_{%
\mathbf{X}_{M},\boldsymbol{I}}^{-1}\Vert _{\infty }$ value and consequently demonstrating good
stability. In fact, according to the Lax Equivalence Theorem, these
numerical observations will be  connected to the convergence of the
solution, as demonstrated below in Fig. \ref{fig:PRFD_2}.


\begin{figure*}[tbph]
\centering
\begin{tabular}{cc}
{(a) \textbf{FE}, well-sampled} & {(b) \textbf{FE}, randomly-sampled } \\
\includegraphics[width=2.5
				in, height=2. in]{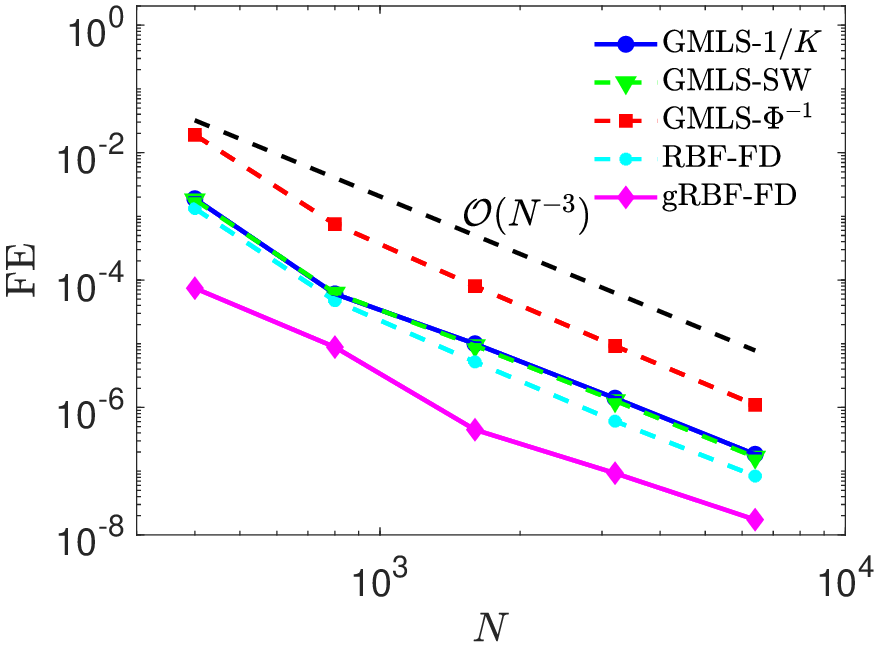} &
\includegraphics[width=2.5
				in, height=2. in]{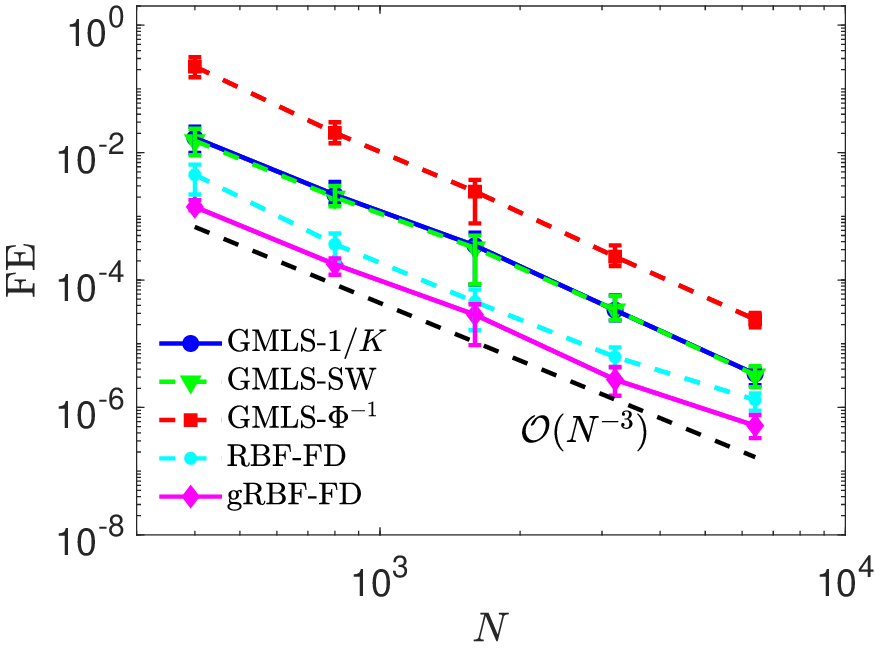} \\
{(c) \textbf{IE}, well-sampled} & {(d) \textbf{IE}, randomly-sampled } \\
\includegraphics[width=2.5
				in, height=2. in]{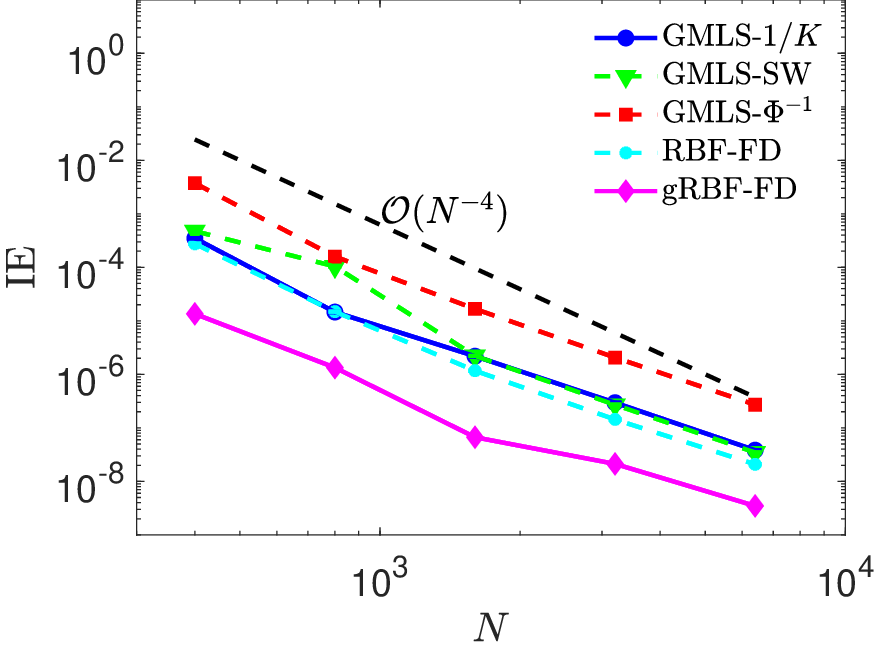} &
\includegraphics[width=2.5
				in, height=2. in]{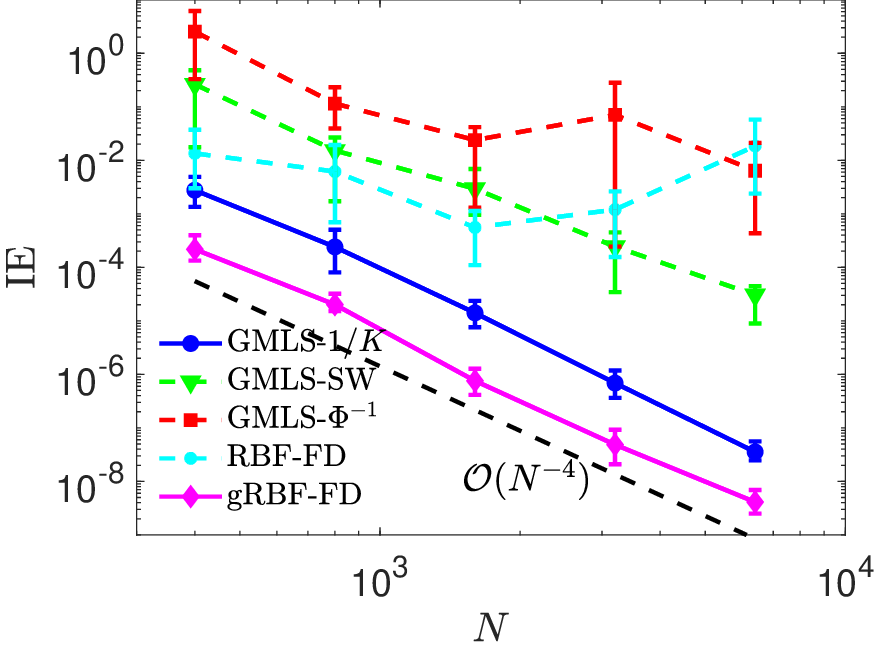}%
\end{tabular}%
\caption{\textbf{1D ellipse in} $\mathbb{R}^{2}$. Comparison among GMLS
(with three weight functions), RBF-FD and gRBF-FD. The left panels are
well-sampled data and the right panels are random data. The upper panels are
forward error (\textbf{FE}) vs. $N$ while the bottom panels are inverse
error (\textbf{IE}) vs. $N$. We fix $K=30$, polynomial degree $4$ and PHS
parameter $\protect\kappa =3$. }
\label{fig:PRFD_2}
\end{figure*}

\begin{figure*}[tbph]
\centering
\begin{tabular}{c}
\includegraphics[width=5.5
				in, height=1.7 in]{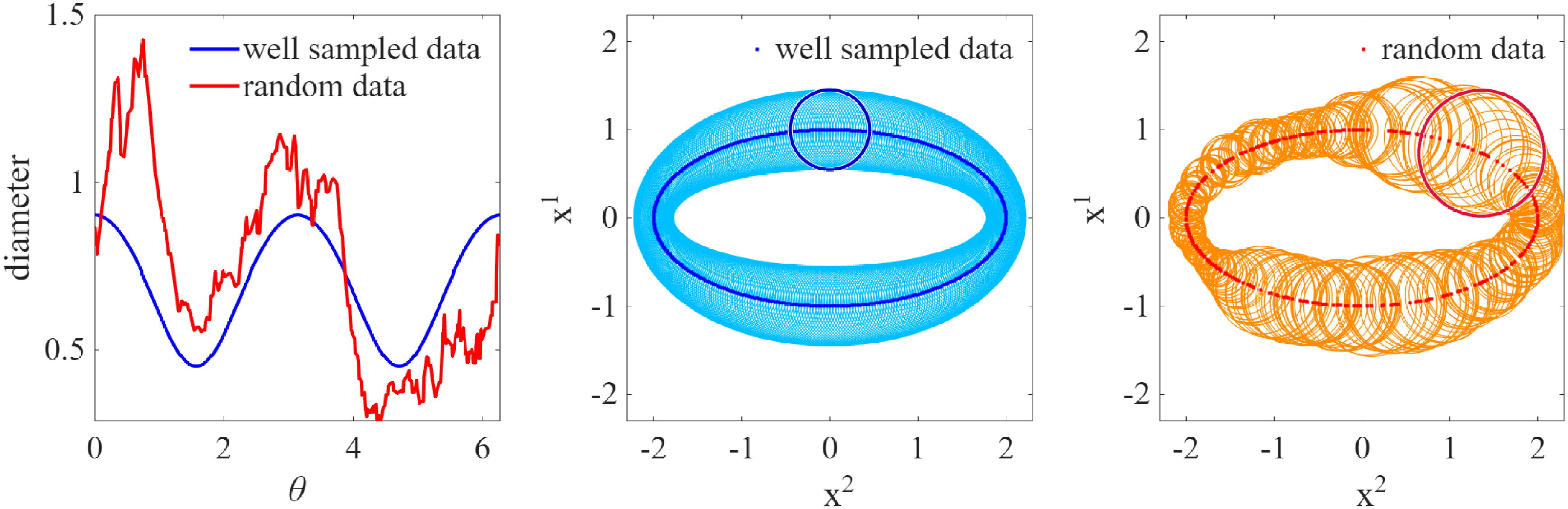}
\end{tabular}%
\caption{\textbf{1D ellipse in} $\mathbb{R}^{2}$. (Left) Stencil diameter as a function $\theta$ for $N=400$ points. Plotted are the sizes of all stencils for
well-sampled data (middle, light blue circles) and  random data (right, orange circles).
Each circle is plotted with a center $\mathbf{x}_{i}$ and a radius $D_{K,\max }({\mathbf{x}_{i}})/2$.
The circle plotted using dark blue (middle) and dark red (right) corresponds to the largest stencil diameter.
We fix $K=30$, polynomial degree $4$ and PHS
parameter $\protect\kappa =3$. }
\label{fig:diameter}
\end{figure*}

In Fig. \ref{fig:PRFD_2}(a) and Fig. \ref{fig:PRFD_2}(c), we illustrate the
consistency of the Laplace-Beltrami approximation and the convergence of the
solution approximation for well-sampled data. All the five
methods show the consistency rate of $\mathcal{O}\left( N^{-3}\right) $ as
well as the super-convergence rate of $\mathcal{O}\left( N^{-4}\right) $.
For well-sampled data or quasi-uniform data, the convergence of GMLS  has
been numerically examined  \cite{liang2013solving,gross2020meshfree,jones2023generalized,jiang2024generalized},
 as has that of
RBF-FD  \cite%
{shankar2015radial,flyer2016role,lehto2017radial,jones2023generalized}. The super-convergence phenomenon has also been discussed in \cite{liang2013solving}.

For randomly-sampled data, the results become different among GMLS, RBF-FD
and gRBF-FD. All the five methods still show the consistency rate of $%
\mathcal{O}\left( N^{-3}\right) $ as expected (Fig. \ref{fig:PRFD_2}(b)).
However, only GMLS-$1/K$ and gRBF-FD achieve convergent solutions  on random data, exhibiting both
small errors and small error fluctuations. This good performance stems from the stability of the approximate Laplacians as well as from the
central spike pattern of the coefficients $w_{k}$. Consequently, employing the $1/K$ weight function is very
important for ensuring the stability and convergence for both the GMLS and
gRBF-FD approaches.


In addition, Fig.~\ref{fig:PRFD_2} shows that the \textbf{FE}s for random data are nearly 10 times those for well-sampled data. As shown in Lemma \ref{lem:inte} and Theorem \ref{thm:Dk} below,
the \textbf{FE}s are of order $O(D_{K,\max }^{l-1})$, where $D_{K,\max }$ is the stencil diameter and $l$ is the polynomial degree. From Fig.~\ref{fig:diameter}, the largest stencil diameter for random data is nearly twice that for well-sampled data. Therefore, theoretically, the \textbf{FE}s for random data should be approximately $2^3=8$ times larger. It is a prediction that agrees well with our numerical observations.

\subsection{Automatic specification of $K$-nearest neighbors}

\label{sec:autoK} We now discuss an automated method of tuning the $K$%
-nearest neighbors for our gRBF-FD method. This automatic tuning method can
also be applied to the GMLS method.



\begin{figure*}[htbp]
\centering
\begin{tabular}{ccc}
{(a) coefficient} & {(b) stability } & {(c) eigenvalue} \\
\includegraphics[width=2.
		in, height=1.8 in]{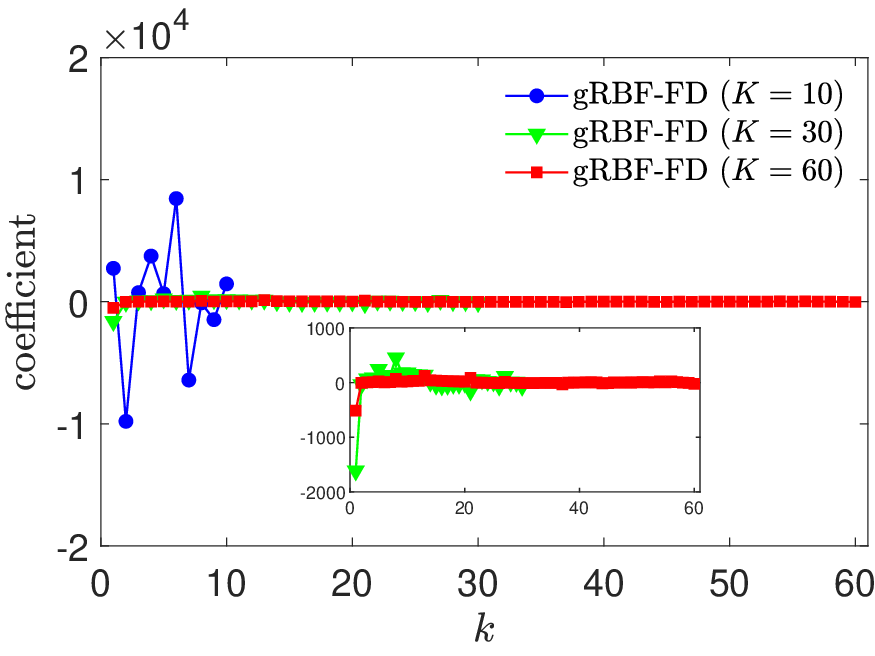} &
\includegraphics[width=2.
		in, height=1.8 in]{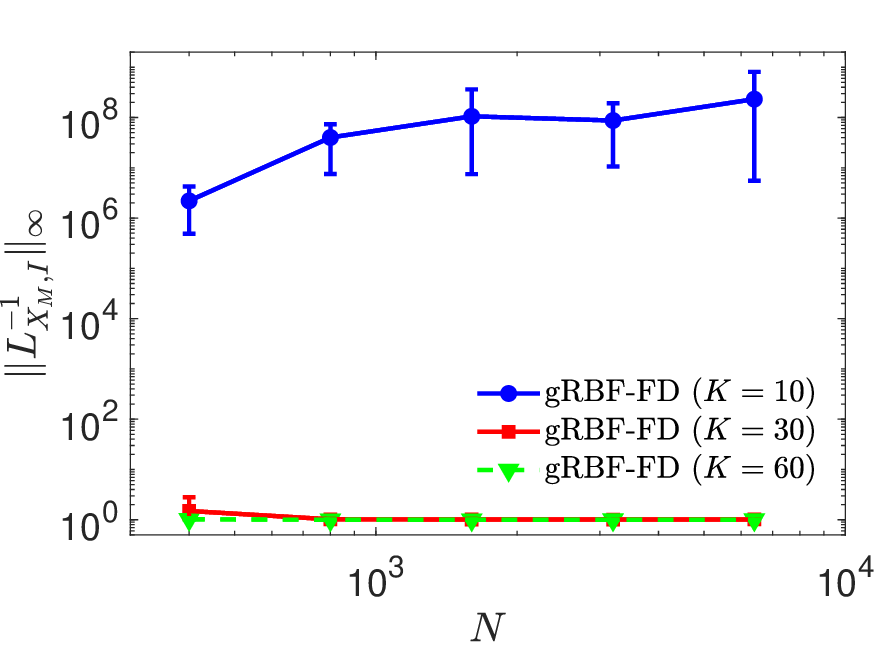} &
\includegraphics[width=2.
		in, height=1.8 in]{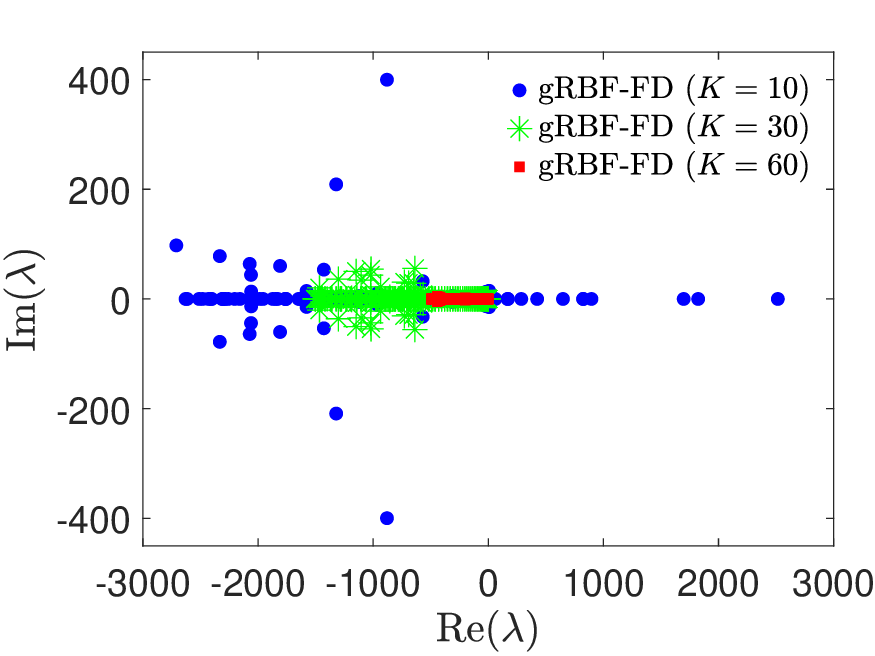}%
\end{tabular}%
\caption{\textbf{1D ellipse in} $\mathbb{R}^2$ using random data. Shown are
(a) the Laplacian coefficient $w_k$ vs. $k$th nearest neighbor, (b) $\Vert%
\boldsymbol{L}^{-1}_{\mathbf{X}_M,\boldsymbol{I}}\Vert_{\infty}$ vs. $N$,
and (c) the leading 200 eigenvalues of $\boldsymbol{L}_{\mathbf{X}_M}$ for
different  $K$ neighbors. We fix polynomial degree 4 and PHS parameter $%
\protect\kappa=3$ for all panels. In panels (a)(c), we use $N=1600$ random
data points. }
\label{fig:whyauto}
\end{figure*}

\textbf{Why do we need to tune }$K$\textbf{\ automatically? }


We first  note that relatively small $K$ may provide unstable approximation
to the Laplacian. For the 1D ellipse example, we know that $K$ should be
chosen such that $K\geq m=l+1=5$ if the polynomial degree is $l=4$. We now
investigate the numerical performance for three values of $K$ $(\geq 5)$ in
terms of the Laplacian coefficients $w_{k}$ (Fig. \ref{fig:whyauto}%
(a)), the stability $\Vert \boldsymbol{L}_{\mathbf{X}_{M},\boldsymbol{I}%
}^{-1}\Vert _{\infty }$ (Fig. \ref{fig:whyauto}(b)), and the leading
eigenvalues (Fig. \ref{fig:whyauto}(c)). For a relatively small value of $K$
(e.g., $K=10$), the numerical results show the following closely connected
characteristics: a positive coefficient at the base point ($w_{1}>0$) (Fig. %
\ref{fig:whyauto}(a)), an unstable approximation to the Laplacian (Fig. \ref%
{fig:whyauto}(b)), and the presence of spurious eigenvalues in the right
half of the complex plane (Fig. \ref{fig:whyauto}(c)). Here, the
Laplace-Beltrami operator is negative semi-definite. For relatively large
values of $K$ (e.g., $30,60$), the Laplacian approximation becomes stable and
all leading eigenvalues lie in the left half complex plane. Hence, avoiding
relatively small values of $K$ is crucial for the successful implementation
of the gRBF-FD method (as well as the GMLS method).


There are other minor reasons for auto-tuning $K$. For instance, a global
choice of $K$ for all points might not be a good strategy, especially for
randomly sampled data. Also for random data, it can be challenging to get $K$
right on the first try, especially in three-dimensional or higher
dimensional problems. Here, we only conjecture some potential drawbacks of
manually-tuned $K$, without showing evidence or numerical validation.

\begin{figure*}[htbp]
\centering
\begin{tabular}{cc}
{(a) $w_1$} & {(b) $\gamma$ } \\
\includegraphics[width=2.8
		in, height=2.3 in]{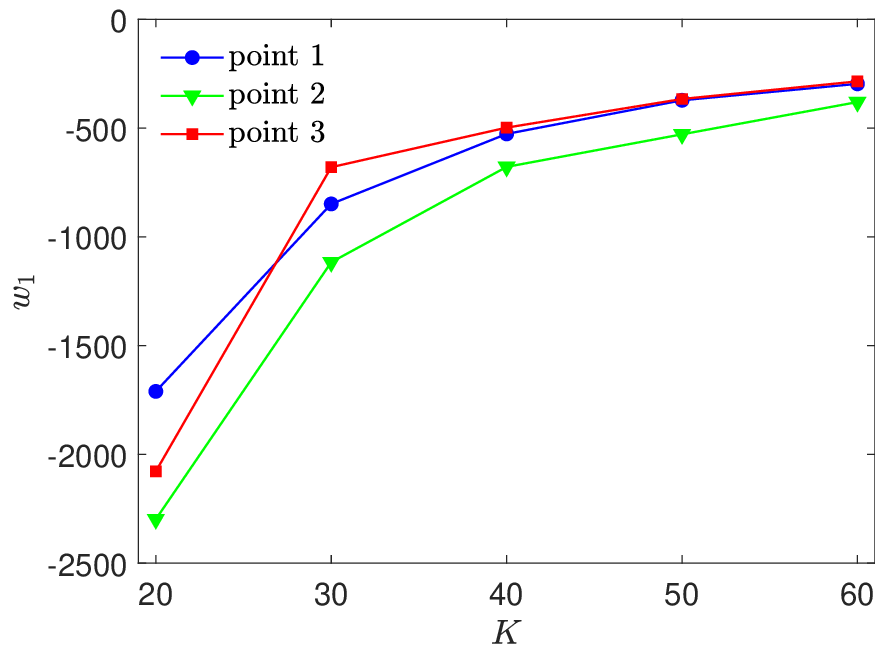} &
\includegraphics[width=2.8
		in, height=2.3 in]{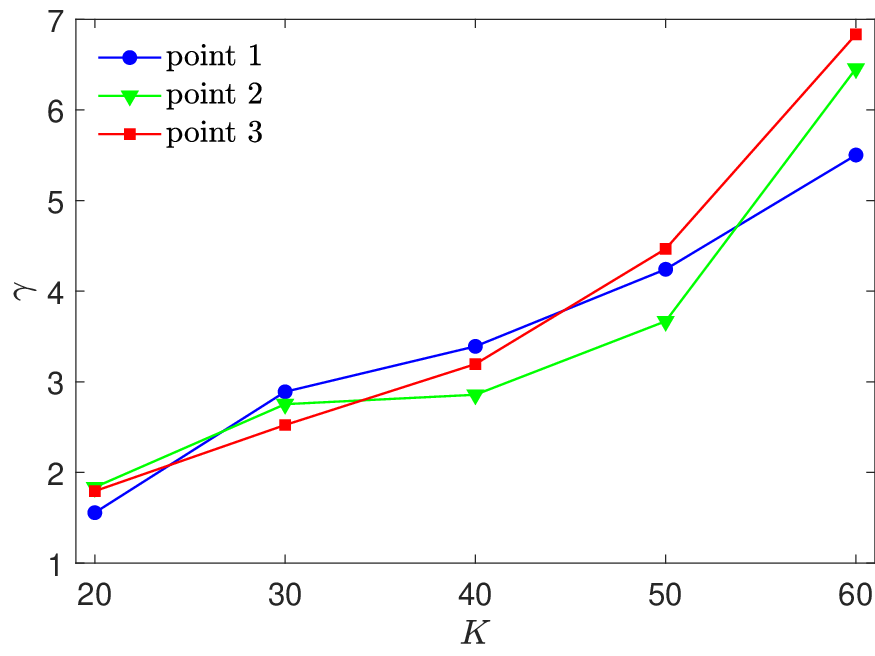}%
\end{tabular}%
\caption{\textbf{1D ellipse in} $\mathbb{R}^2$ using random data. Shown are
(a) the coefficient of the center point $w_1$ vs. the stencil size $K$ and
(b) the ratio $\protect\gamma$ vs. $K$. In both panels (a) and (b), we show
the results on three different stencils. We fix the number of points $N=1600$%
, polynomial degree 4 and PHS parameter $\protect\kappa=3$.}
\label{fig:howauto}
\end{figure*}

\textbf{How to automatically tune }$K$\textbf{?}

\comment{3. w1 vs. K 4. gamma vs. K}

As can be seen from Fig. \ref{fig:whyauto}, there is a close connection
among the Laplacian coefficients $w_{k}$, the stability $\Vert \boldsymbol{L}%
_{\mathbf{X}_{M},\boldsymbol{I}}^{-1}\Vert _{\infty }$,\ and the leading
eigenvalues. We hope to reduce the upper bound of $\Vert \boldsymbol{L}_{%
\mathbf{X}_{M},\boldsymbol{I}}^{-1}\Vert _{\infty }$ as well as to ensure
that all eigenvalues lie in the left half plane. However, it might not be
easy to predict these two properties of the Laplacian matrix before it is
constructed. Fortunately, we are able to control the central spike pattern of the
Laplacian coefficients by tuning the $K$ for each point so that the Laplacian approximation is
expected to be stable.

\begin{definition}\label{def:4.3}
To quantitatively characterize the central spike pattern, we define a ratio $\gamma(\mathbf{x}_i) $
as
\begin{equation*}
\gamma :=\frac{|w_{1}|}{\max_{2\leq k\leq K}\left\vert w_{k}\right\vert },
\end{equation*}%
for each $\mathbf{x}_i \in \mathbf{X}_M$.
The Laplacian matrix generated by the GMLS or gRBF-FD methods is said to be
nearly diagonally dominant if the ratio is such that $\gamma \geq \gamma _{\text{th}}=3$ for each point in $\mathbf{X}_M$.
\end{definition}

Our goal is to find the parameter $K$ such that the Laplacian coefficients $%
\left\{ w_{k}\right\} _{k=1}^{K}$ satisfy two conditions: a negative
coefficient at the base point ($w_{1}<0$) and a ratio $\gamma $ exceeding a
given threshold $\gamma _{\text{th}}$ ($\gamma \geq \gamma _{\text{th}}$).
It can be seen from Fig. \ref{fig:howauto}(a) that the first condition ($%
w_{1}<0 $) can be easily satisfied for the coefficient to be negative. This
can be expected since the Laplace-Beltrami is negative semi-definite. It
can be further seen from Fig. \ref{fig:howauto}(b) that the second condition, $\gamma \geq \gamma _{%
\text{th}}$, can
also be satisfied
when  the threshold  $\gamma _{\text{th}}=3$.
This can be expected because assigning a higher weight to the base point in
both optimization problems (\ref{eqn:LS_optm}) and (\ref{eqn:opphs}) is designed  to increase $\gamma$.
The threshold $%
\gamma _{\text{th}}=3$ was chosen empirically because it consistently leads
to a stable approximation in our experiments.
Moreover, in all
numerical examples in Section \ref{sec:numerical}, we successfully found a parameter $K$ for each base point  that
satisfies both conditions: $w_{1}<0$ and $\gamma \geq \gamma _{\text{th}}=3$.


Now we summarize the automatic tuning strategy of $K$ as follows. For each
point, the parameter $K$ is kept growing from an initial value $K_{0}$
($>m$) until an appropriate $K$ is identified such that a set of
Laplacian coefficients $\{w_k\}_{k=1}^K$ satisfies both $w_{1}<0$\ and $\gamma \geq 3$.
Hence the Laplacian matrix  becomes nearly diagonally
dominant for the GMLS-$1/K$ and gRBF-FD methods.

\comment{%
	if
really
cannot
be
satisfied,
then
find
the
K
with
the
most
negative w1%
}

\begin{figure*}[htbp]
\centering
\begin{tabular}{ccc}
{(a) gRBF-FD} & {(b) GMLS } & {(c) \textbf{IE} vs. $K$} \\
\includegraphics[width=2.
		in, height=1.8 in]{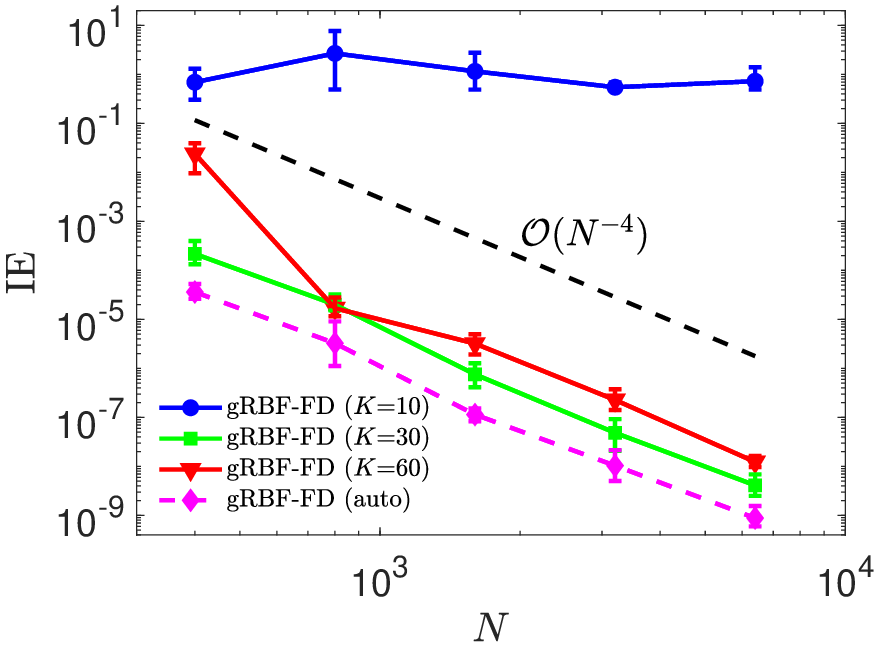} &
\includegraphics[width=2.
		in, height=1.8 in]{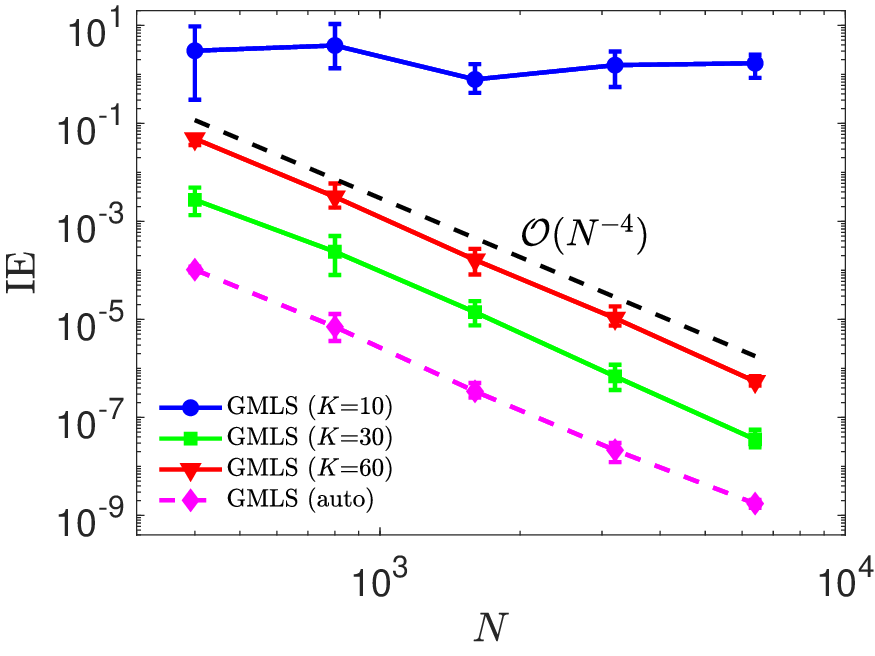} &
\includegraphics[width=2.
		in, height=1.8 in]{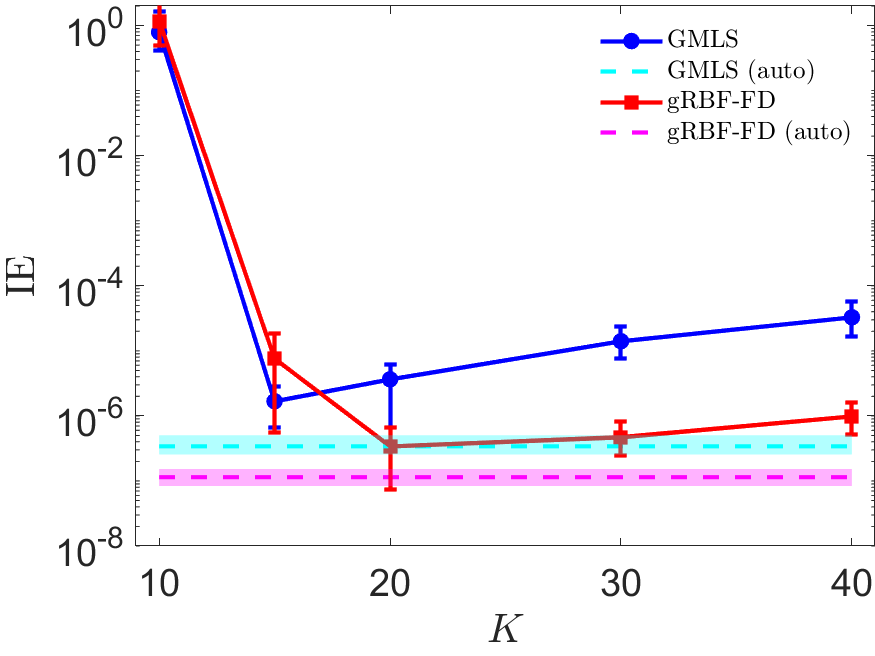}%
\end{tabular}%
\caption{\textbf{1D ellipse in} $\mathbb{R}^2$ using random data. Comparison
of \textbf{IE}s for different stencil size $K$. Panels (a) and (b) display
$\mathbf{IE}$ vs. $N$ for gRBF-FD and GMLS, respectively. Panel (c) displays
$\mathbf{IE}$ vs. $K$ using $N=1600$ points. The errors are plotted with
error bars obtained from the standard deviations. We fix polynomial degree 4
and PHS parameter $\protect\kappa=3$. For gRBF-FD (auto) and GMLS (auto), we
set the initial $K_0=10$.}
\label{fig:PRFD_adpt}
\end{figure*}

\textbf{What is the numerical performance of auto-tuned }$K$\textbf{?}


In Fig. \ref{fig:PRFD_adpt}(a) and Fig. \ref{fig:PRFD_adpt}(b), we compare
the performance of \textbf{IE}s between manually-tuned $K$ and auto-tuned $K$
for the GMLS-$1/K$ and the gRBF-FD, respectively. It can be seen that
a manually chosen large $K(=60)$ provides large \textbf{IE}s, whereas a
manually chosen small $K(=10)$ fails to maintain the stability. In Fig. \ref%
{fig:PRFD_adpt}(c), we show \textbf{IE}s as a function of $K$ for GMLS-$1/K$
and gRBF-FD. It can be observed that gRBF-FD (red solid)\ not only
exhibits smaller \textbf{IE}s than GMLS (blue solid)\ but is also less
sensitive to variations in $K$ for $K\geq 20$. This advantage of gRBF-FD
primarily results from the improved operator approximation and the enhanced
stability using the $1/K$ weight $\boldsymbol{\Lambda }_{K}$. It can be
further observed from Fig. \ref{fig:PRFD_adpt} that approaches using
auto-tuned $K$ always provide smaller \textbf{IE}s than those using
manually-tuned $K$ for both GMLS-$1/K$ and gRBF-FD. Indeed, the
auto-tuning strategy provides an appropriate value of $K$ that stabilizes
the approximation and avoids unnecessary stencil growth for each
point. Therefore, using auto-tuned $K$ is important for achieving good
numerical performance of GMLS-$1/K$ and gRBF-FD.

\subsection{GRBF-FD algorithm using auto-tuned $K$-nearest neighbors}

Combining the computational techniques of the normalization of Monge
coordinates (Sec.~\ref{sec:nmmg}), weighted ridge regression (Sec.~\ref%
{sec:wtrg}), and auto-tuned $K$-nearest neighbors (Sec.~\ref{sec:autoK}), we
now provide our gRBF-FD algorithm with auto-tuned $K$ in Algorithm \ref%
{algo:intrin-LB_ad}. The GMLS algorithm using auto-tuned $K$ can be applied
similarly following the similar procedure.



\begin{algorithm}[ht]
	\caption{gRBF-FD using auto-tuned $K$ for the Laplace-Beltrami operator}
	\begin{algorithmic}[1]
		\STATE {\bf Input:} A point cloud $\{\mathbf{x}_{i}\}_{i=1}^{N}\subset M$, bases of analytic tangent vectors at each node $\left\{ \boldsymbol{t}_{1}(\mathbf{x}_{i}),\ldots,\boldsymbol{t}_{d}(%
		\mathbf{x}_{i})\right\} _{i=1}^{N}$, the degree $l$ of  polynomials, the PHS smoothness parameter $\kappa$, and a parameter $K(=K_0>m)$ nearest neighbors where $%
		m=\left(
		\begin{array}{c}
			l+d \\
			d%
		\end{array}%
		\right) $ is the number of monomial basis functions $\left\{ \boldsymbol{%
			\theta }^{\boldsymbol{\alpha }}|0\leq \left\vert \boldsymbol{\alpha }%
		\right\vert \leq l\right\} $.
		\STATE Set $\boldsymbol{L}_{\mathbf{X}_M}$ to be a sparse $N\times N$ matrix with initial $NK$ nonzeros.
		\FOR{$i\in \{1,2,...,N\}$}
		\STATE Set $\gamma = 0$ and $w_1 = 0$.
		\WHILE{$\gamma < 3$ or $w_1>0$}
		\STATE Find the $K$ nearest neighbors of the point $\mathbf{x}_i$, denoted by the stencil $S_{\mathbf{x}_i}=\{\mathbf{x}_{i,k}\}_{k=1}^K$.
		\STATE Compute the stencil diameter $D_{K,\max}(\mathbf{x}_i)$ and the normalized Monge coordinates $\boldsymbol{\tilde{\theta}}\left(\mathbf{x}_{i,k}\right)$ using (\ref{eqn:thetd}).
		\STATE  Construct the $K$ by $m$ Vandermonde-type matrix $\boldsymbol{\tilde{P}}$ with entries $\boldsymbol{\tilde{P}}_{kj} = p_{\boldsymbol{\alpha }(j)}\left( \boldsymbol{%
\tilde{\theta}}\left( \mathbf{x}{_{i,k}}\right) \right) = \prod_{s=1}^{d} \tilde{\theta}_s^{\alpha_s(j)}\left(\mathbf{x}_{i,k}\right)$.
		\STATE Construct the $K$ by $K$ PHS matrix $\boldsymbol{\tilde{\Phi}}$ with entries
			$\boldsymbol{\tilde{\Phi}}_{ks} = \Vert \boldsymbol{\tilde{\theta}}\left(\mathbf{x}_{i,k}\right) - \boldsymbol{\tilde{\theta}}\left(\mathbf{x}_{i,s}\right)\Vert^{2\kappa+1}$.
		\STATE Calculate the Laplacian coefficients $\{w_k\}_{k=1}^{K}$ by the formula in \eqref{eqn:normMon},%
		\begin{equation*}
(w_1,\ldots,w_K) = \frac{1}{D_{K,\max }^{2}}%
\left( \left( \Delta _{\boldsymbol{\tilde{\theta}}}\boldsymbol{\tilde{\phi}}%
_{\mathbf{x}_{0}}\right) \boldsymbol{\tilde{\Phi}}^{\dag}_{\boldsymbol{\Lambda}_K}\big(\mathbf{I}-%
\boldsymbol{\tilde{P}}\left( \boldsymbol{\tilde{P}}^{\top }\boldsymbol{%
\Lambda}_K \boldsymbol{\tilde{P}}\right) ^{-1}\boldsymbol{\tilde{P}}^{\top }\boldsymbol{%
\Lambda }_K\big)+\left( \Delta _{\boldsymbol{\tilde{\theta}}}\boldsymbol{%
\tilde{p}}_{\mathbf{x}_{0}}\right) \left( \boldsymbol{\tilde{P}}^{\top }%
\boldsymbol{\Lambda}_K \boldsymbol{\tilde{P}}\right) ^{-1}\boldsymbol{\tilde{P}}^{\top }%
\boldsymbol{\Lambda }_K\right),
		\end{equation*}
where $\boldsymbol{\tilde{\Phi}}^{\dag}_{\boldsymbol{\Lambda}_K}$ is given (\ref{eqn:PhiIn}), $\boldsymbol{%
\Lambda}_K$ is given in (\ref{eqn:k_weight}), and $\Delta _{\boldsymbol{\tilde{\theta}}}\boldsymbol{\tilde{\phi}}%
_{\mathbf{x}_{0}}$ and $\Delta _{\boldsymbol{\tilde{\theta}}}\boldsymbol{%
\tilde{p}}_{\mathbf{x}_{0}}$ are given in (\ref{eqn:Phitd}).
		\STATE Calculate the ratio $\gamma(\mathbf{x}_i) = \frac{|w_1|}{ \max_{2\leq k\leq K}|w_k |}$ to characterize the central spike pattern at $\mathbf{x}_i$.
        \STATE Increase the $K$ by a small positive integer, e.g., 2.
		\ENDWHILE
		\STATE Arrange the Laplacian coefficients $\{w_k\}_{k=1}^{K}$ into the corresponding rows and columns of $\boldsymbol{L}_{\mathbf{X}_M}$.
		\ENDFOR
		\STATE {\bf Output:} The approximate operator matrix $\boldsymbol{L}_{\mathbf{X}_M}$.
	\end{algorithmic}
	\label{algo:intrin-LB_ad}
\end{algorithm}



\subsection{Error analysis for operator approximation}

In this section, we provide the error estimate for the Laplace-Beltrami
operator approximation. Note that for quasi-uniform data, the error
estimate is written in terms of the fill distance and separation distance
(see e.g., \cite%
{Wendland2005Scat,mirzaei2012generalized,fuselier2012scattered,fuselier2013high,gross2020meshfree}%
). Here, for random data, we first write the error in terms of the stencil
diameter $D_{K,\max }$, and then relate it to the number of data points, $N$.

\subsubsection{Error estimate in terms of the stencil diameter}

The following local polynomial reproduction property is presented in terms
of the stencil diameter, in contrast to the definition using the fill distance in \cite%
{Wendland2005Scat,mirzaei2012generalized}. For
simplicity, we only consider the derivative to be the Laplace-Beltrami
operator.

\begin{lemma}
\label{lem:pol} Consider a process that defines for each stencil $S_{\mathbf{%
x}_{0}}=\left\{ \mathbf{x}_{0,k}\right\} _{k=1}^{K}$ a family of function $%
u_{k,\Delta }:M\rightarrow \mathbb{R},1\leq k\leq K$ to approximate%
\begin{equation}
\Delta _{M}f\left( \mathbf{x}\right) \approx \sum_{k=1}^{K}u_{k,\Delta }(%
\mathbf{x})f(\mathbf{x}_{0,k}):=\Delta _{M}\mathcal{I}_{p}\mathbf{f}_{{%
\mathbf{x}_{0}}}\left( \mathbf{x}\right) ,  \label{eqn:hatL}
\end{equation}%
for functions $f\in C^{l}(M)$. Then we say that the process provides a local
polynomial reproduction of degree $l$ on $M$ if there exists a constant $%
C_{1}>0$ such that

(1) $\sum_{k=1}^{K}u_{k,\Delta }(\mathbf{x})p(\mathbf{x}_{0,k})=\Delta _{M}p(%
\mathbf{x})$, \ for all $p\in \mathbb{P}_{\mathbf{x}_{0}}^{l,d},\mathbf{x}%
\in M$,

(2) $\sum_{k=1}^{K}\left\vert u_{k,\Delta }(\mathbf{x})\right\vert \leq
C_{1}D_{K,\max }^{-2}$, $\forall \mathbf{x}\in M$,

(3) $u_{k,\Delta }(\mathbf{x})=0$ if $\Vert \boldsymbol{\theta }\left(
\mathbf{x}\right) -\boldsymbol{\theta }\left( \mathbf{x}_{0,k}\right) \Vert
>D_{K,\max }$.
\end{lemma}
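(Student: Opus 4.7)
The statement labeled Lemma \ref{lem:pol} is \emph{definitional} in nature: it introduces the terminology ``local polynomial reproduction of degree $l$ on $M$'' by stipulating three characterizing conditions (1)--(3) that a family of weight functions $\{u_{k,\Delta}\}$ may or may not satisfy. The phrasing ``we say that the process provides $\ldots$ if there exists a constant $C_1 > 0$ such that $\ldots$'' is the signature of a definition rather than an assertion to be verified. Accordingly, there is no classical theorem-style argument to construct here; the ``proof'' is vacuous.

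The plan, therefore, is to treat this as a framework-setting statement and to articulate what is being packaged into each of the three conditions, since those are the hooks upon which later error estimates (in particular the forward reference Theorem \ref{thm:Dk}) will be hung. Condition (1) is the \emph{exactness} requirement: the discrete approximation must reproduce $\Delta_M p(\mathbf{x})$ at $\mathbf{x}$ for every local polynomial $p\in \mathbb{P}^{l,d}_{\mathbf{x}_0}$. Condition (2) is the \emph{stability} requirement, and the scaling $D_{K,\max}^{-2}$ is precisely the homogeneity of a second-order operator $\Delta_M$ under dilations of the tangent-plane coordinates $\boldsymbol{\theta}$, consistent with the normalization introduced in Section \ref{sec:nmmg}. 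Condition (3) is a \emph{locality} requirement, ensuring that the weights $u_{k,\Delta}(\mathbf{x})$ vanish for stencil nodes lying outside the ball of radius $D_{K,\max}$ in the projected tangent coordinates, so that the discretization is genuinely supported on $S_{\mathbf{x}_0}$.

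The main obstacle is not in proving the lemma itself (there is nothing to prove), but rather in the separate, subsequent task of exhibiting a concrete process --- for instance, the GMLS scheme of Section \ref{sec:2_1} or the gRBF-FD scheme of Section \ref{sec:gRBF-FD} --- for which the resulting $u_{k,\Delta}$ satisfy (1)--(3) with a constant $C_1$ independent of the base point $\mathbf{x}_0$ and of the detailed local geometry of the stencil. In that forthcoming verification, I would expect the uniform stability bound (2) to be the delicate ingredient, since it requires controlling the inverse of the normal-equation matrix $\boldsymbol{P}^{\top}\boldsymbol{\Lambda}\boldsymbol{P}$ under randomly sampled point clouds; conditions (1) and (3) follow, respectively, from the exactness of GMLS on $\mathbb{P}^{l,d}_{\mathbf{x}_0}$ and from the compact-support convention built into the stencil definition.
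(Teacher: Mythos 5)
You are right that the sentence as literally phrased (``we say that the process provides \ldots if \ldots'') reads as a definition, and your identification of the roles of the three conditions (exactness, stability with the dilation scaling $D_{K,\max}^{-2}$ of a second-order operator, locality) matches the paper's intent. However, the paper does attach an actual proof to this lemma in Appendix \ref{app:A}, and that proof is precisely the verification you explicitly defer to a ``separate, subsequent task'': showing that the GMLS weights $u_{k,\Delta}$ defined by $(u_{1,\Delta}(\mathbf{x}),\ldots,u_{K,\Delta}(\mathbf{x}))=\Delta _{M}\boldsymbol{\tilde{p}}(\mathbf{x})\bigl(\boldsymbol{\tilde{P}}^{\top }\boldsymbol{\Lambda}\boldsymbol{\tilde{P}}\bigr)^{-1}\boldsymbol{\tilde{P}}^{\top }\boldsymbol{\Lambda }$ satisfy (1) and (2). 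So as a response to the stated task, your proposal has a genuine gap: it contains the plan but none of the computation.

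For the record, the missing verification is short. For (1), write an arbitrary $p\in\mathbb{P}_{\mathbf{x}_0}^{l,d}$ in the normalized basis so that $\mathbf{p}_{\mathbf{x}_0}=\boldsymbol{\tilde{P}}\mathbf{\tilde{b}}$; then $\bigl(\boldsymbol{\tilde{P}}^{\top }\boldsymbol{\Lambda}\boldsymbol{\tilde{P}}\bigr)^{-1}\boldsymbol{\tilde{P}}^{\top }\boldsymbol{\Lambda }\mathbf{p}_{\mathbf{x}_0}=\mathbf{\tilde{b}}$ exactly, and applying $\Delta_M$ to the expansion gives $\Delta_M p(\mathbf{x})$. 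For (2), the factor $D_{K,\max}^{-2}$ is extracted by passing from $\Delta_M$ to $\Delta_{\boldsymbol{\tilde\theta}}$ in the normalized Monge coordinates of Section \ref{sec:nmmg}, after which $\Delta_{\boldsymbol{\tilde\theta}}\boldsymbol{\tilde{p}}(\mathbf{x})$ and $\boldsymbol{\tilde{P}}$ have $O(1)$ entries; the $\ell^1$ bound then follows provided $\bigl(\boldsymbol{\tilde{P}}^{\top }\boldsymbol{\Lambda}\boldsymbol{\tilde{P}}\bigr)^{-1}$ is bounded independently of $D_{K,\max}$. You correctly flag this last point as the delicate ingredient; it is worth noting that the paper's own proof also does not fully justify it (it simply asserts that ``all above quantities are of $O(1)$''), so your instinct about where the real difficulty lies is sound --- but identifying the difficulty is not the same as discharging it, and neither your proposal nor a complete proof can stop at the observation.
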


For the first step of our gRBF-FD, the process\ satisfies the local
polynomial reproduction if we use the GMLS interpolant (\ref{eqn:gmls}) with
the regression coefficients in (\ref{eqn:b}) (see Appendix \ref{app:A}).
Then we have the  interpolation result following from Theorem 4.2 and Theorem
4.3 of \cite{mirzaei2012generalized}.

\begin{lemma}
\label{lem:inte} Define $\mathcal{I}_{p}\mathbf{f}_{{\mathbf{x}_{0}}}\left(
\mathbf{x}\right) $ (equation (\ref{eqn:gmls})) and $\Delta _{M}\mathcal{I}%
_{p}\mathbf{f}_{{\mathbf{x}_{0}}}\left( \mathbf{x}\right) \ $(equation (\ref%
{eqn:hatL})) to be the GMLS approximation to $f\left( \mathbf{x}\right) $
and $\Delta _{M}f\left( \mathbf{x}\right) $, respectively, using local
polynomials up to degree $l$. Let $\{u_{k,\Delta }(\mathbf{x})\}$ in (\ref%
{eqn:hatL}) be a local polynomial reproduction of order $l$. Then for any $%
f\in C^{l+1}(M)$ with $l\geq 2$, there is an error bound%
\begin{eqnarray*}
\big|f(\mathbf{x})-\mathcal{I}_{p}\mathbf{f}_{{\mathbf{x}_{0}}}\left(
\mathbf{x}\right) \big| &\leq &C_{2}D_{K,\max }^{l+1} {(\mathbf{x}_0)} \big|f\big|%
_{C^{l+1}(M)}, \\
\big|\Delta _{M}f(\mathbf{x})-\Delta _{M}\mathcal{I}_{p}\mathbf{f}_{{\mathbf{%
x}_{0}}}\left( \mathbf{x}\right) \big| &\leq &\tilde{C}_{2}D_{K,\max }^{l-1}%
{(\mathbf{x}_0)} \big|f\big|_{C^{l+1}(M)},
\end{eqnarray*}%
for all $\mathbf{x}\in M$ and some $C_{2},\tilde{C}_{2}>0$. Here the
semi-norm, $|f|_{C^{l+1}(M)}:=\max_{|\boldsymbol{\alpha }|=l+1}\Vert D^{%
\boldsymbol{\alpha }}f\Vert _{L^{\infty }(M)}$, is defined over $M$, where $%
D^{\boldsymbol{\alpha }}$ denotes a general multi-dimensional derivative
with multi-index $\boldsymbol{\alpha }$.
\end{lemma}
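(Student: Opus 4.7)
The plan is to follow the standard polynomial-reproduction plus Taylor-remainder argument of Theorems 4.2--4.3 of Mirzaei--Schaback--Dehghan, adapted to the manifold setting by pulling back everything through the Monge chart $\boldsymbol{\theta}:U_0\to V_0\subset\mathbb{R}^d$. The key preparatory step is a local Taylor decomposition: since $\tilde f:=f\circ\boldsymbol{\theta}^{-1}\in C^{l+1}(V_0)$, multivariate Taylor's theorem with integral remainder yields, for all $\mathbf{x}\in S_{\mathbf{x}_0}$, a splitting
\[
f(\mathbf{x})=p(\mathbf{x})+r(\mathbf{x}),\qquad p\in\mathbb{P}_{\mathbf{x}_0}^{l,d},\qquad |r(\mathbf{x})|\leq C\,\|\boldsymbol{\theta}(\mathbf{x})\|^{l+1}\,|f|_{C^{l+1}(M)},
\]
where $p$ is the pull-back of the degree-$l$ Taylor polynomial of $\tilde f$ at the origin. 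In particular, since $\|\boldsymbol{\theta}(\mathbf{x}_{0,k})\|\leq D_{K,\max}(\mathbf{x}_0)$ by Definition~\ref{def:srad}, one has $|r(\mathbf{x}_{0,k})|\leq C\,D_{K,\max}^{l+1}(\mathbf{x}_0)\,|f|_{C^{l+1}(M)}$ for every stencil node, and similarly for $\mathbf{x}$ in the stencil.

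For the first (function-value) bound I would write the GMLS interpolant in Lagrange form, $\mathcal{I}_p\mathbf{f}_{\mathbf{x}_0}(\mathbf{x})=\sum_{k=1}^{K}u_k(\mathbf{x})f(\mathbf{x}_{0,k})$, and invoke the identity-operator analogue of Lemma~\ref{lem:pol}: namely, $\{u_k\}$ reproduces $\mathbb{P}_{\mathbf{x}_0}^{l,d}$ exactly, and $\sum_k|u_k(\mathbf{x})|\leq \bar C$ uniformly (no $D_{K,\max}^{-2}$ since no derivative is applied). Because $p\in\mathbb{P}_{\mathbf{x}_0}^{l,d}$ is reproduced, the error collapses to a remainder-only expression,
\[
f(\mathbf{x})-\mathcal{I}_p\mathbf{f}_{\mathbf{x}_0}(\mathbf{x})=r(\mathbf{x})-\sum_{k=1}^{K}u_k(\mathbf{x})\,r(\mathbf{x}_{0,k}),
\]
from which the bound $|f(\mathbf{x})-\mathcal{I}_p\mathbf{f}_{\mathbf{x}_0}(\mathbf{x})|\leq (1+\bar C)\,C\,D_{K,\max}^{l+1}\,|f|_{C^{l+1}(M)}$ is immediate.

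The Laplacian bound is obtained by the same decomposition, now against the derivative reproduction from Lemma~\ref{lem:pol}: $\sum_k u_{k,\Delta}(\mathbf{x})\,p(\mathbf{x}_{0,k})=\Delta_Mp(\mathbf{x})$ for every $p\in\mathbb{P}_{\mathbf{x}_0}^{l,d}$, so
\[
\Delta_Mf(\mathbf{x})-\sum_{k=1}^{K}u_{k,\Delta}(\mathbf{x})f(\mathbf{x}_{0,k})=\Delta_Mr(\mathbf{x})-\sum_{k=1}^{K}u_{k,\Delta}(\mathbf{x})\,r(\mathbf{x}_{0,k}).
\]
The discrete sum is bounded by combining $\sum_k|u_{k,\Delta}(\mathbf{x})|\leq C_1 D_{K,\max}^{-2}$ (Lemma~\ref{lem:pol}(2)) with the pointwise remainder estimate, giving $C_1 C\,D_{K,\max}^{l-1}\,|f|_{C^{l+1}(M)}$. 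For the continuous term I would use the Monge-chart formula for $\Delta_M$ from Section~\ref{sec:LBapp} together with the fact that $g^{ij}$, $\Gamma^k_{ij}$, and $\sqrt{|g|}$ are uniformly bounded on a neighborhood of $\mathbf{x}_0$ when $M$ is compact; this reduces $|\Delta_M r(\mathbf{x})|$ to estimating the second-order partials of the Taylor remainder of $\tilde f$, each of which is $O(\|\boldsymbol{\theta}(\mathbf{x})\|^{l-1}\,|f|_{C^{l+1}(M)})$ by the standard Taylor-remainder integral form. Adding the two contributions yields $\tilde C_2\,D_{K,\max}^{l-1}\,|f|_{C^{l+1}(M)}$.

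The main obstacle is really bookkeeping rather than conceptual: making the constants $C_2,\tilde C_2$ independent of the base point $\mathbf{x}_0$. This requires (i) a uniform upper bound on the metric coefficients and Christoffel symbols in the Monge chart, which follows from compactness and smooth embedding of $M$, and (ii) a uniform lower bound on the smallest singular value of the Vandermonde-type matrix $\boldsymbol{P}$ across all stencils, which is what guarantees both the reproduction constant and the $\sum_k|u_k|$ stability bound (and is where condition $\mathrm{rank}(\boldsymbol{P})=m$ together with the bounded-stencil hypothesis enters). Once these uniform bounds are established by a standard covering-and-perturbation argument, the rest of the proof is the routine splitting sketched above.
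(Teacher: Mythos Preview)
Your proposal is correct and follows essentially the same approach as the paper, which does not give an independent proof but simply cites Theorems~4.2 and~4.3 of Mirzaei--Schaback--Dehghan; your argument is precisely the Taylor-remainder plus polynomial-reproduction machinery of those theorems, pulled back through the Monge chart. The bookkeeping concerns you flag about uniformity of constants are legitimate but are handled (or tacitly assumed) in exactly the same way in the cited reference and in the paper's use of Lemma~\ref{lem:pol}.
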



To study the second step of gRBF-FD in (\ref{eqn:Ipph}), we next provide a local reproduction property for a PHS function.

\begin{lemma}
\label{lem:rbfwk} Let $\phi $\ be a PHS function and its interpolant (\ref%
{eqn:Ipph}) satisfies $\mathcal{I}_{\phi }:\mathbb{R}^{K}\rightarrow
C^{l_{2}}$, where $l_{2}$ denotes the smoothness of $\phi $. Consider a
process that defines for each stencil $S_{\mathbf{x}_{0}}=\left\{ \mathbf{x}%
_{0,k}\right\} _{k=1}^{K}$ a family of functions $v_{k,\Delta }:M\rightarrow
\mathbb{R},1\leq k\leq K$ to approximate%
\begin{equation}
\Delta _{M}s\left( \mathbf{x}\right) \approx \sum_{k=1}^{K}v_{k,\Delta }(%
\mathbf{x})s(\mathbf{x}_{0,k}):=\Delta _{M}\mathcal{I}_{\phi }\mathbf{s}_{{%
\mathbf{x}_{0}}}\left( \mathbf{x}\right) ,  \label{eqn:hatL2}
\end{equation}%
for functions $s\in C^{l_{2}}(M)$. Then we say that the process provides a
local PHS weak reproduction on $M$ if there exist  $C_{3},C_{4}>0$
 such that

(1) $\left\vert \Delta _{M}\mathcal{I}_{\phi }\boldsymbol{\varphi }_{{%
\mathbf{x}_{0}}}\left( \mathbf{x}\right) -\Delta _{M}\varphi (\mathbf{x}%
)\right\vert =\left\vert \sum_{k=1}^{K}v_{k,\Delta }(\mathbf{x})\varphi (%
\mathbf{x}_{0,k})-\Delta _{M}\varphi (\mathbf{x})\right\vert \leq
C_{3}\delta ^{2}D_{K,\max }^{-2}\max_{1\leq k\leq K}\left\vert \tilde{c}_{k}\right\vert$, for all $\mathbf{x}\in M$, $\varphi =\sum_{k=1}^{K}\tilde{c}%
_{k}\tilde{\phi}_{k}\in \mathrm{Span}\{\tilde{\phi}_{1},\ldots ,\tilde{\phi}%
_{K}\}$, where $\delta $ is the regularization parameter in (\ref{eqn:opphs}%
) and $\tilde{\phi}_{k}(\mathbf{x})=\phi (\Vert \boldsymbol{\tilde{\theta}}%
\left( \mathbf{x}\right) -\boldsymbol{\tilde{\theta}}\left( \mathbf{x}%
_{0,k}\right) \Vert )=\phi (\Vert \boldsymbol{\theta }\left( \mathbf{x}%
\right) -\boldsymbol{\theta }\left( \mathbf{x}_{0,k}\right) \Vert /D_{K,\max
})$ for $k=1,\ldots ,K$,

(2) $\sum_{k=1}^{K}\left\vert v_{k,\Delta }(\mathbf{x})\right\vert \leq
C_{4}D_{K,\max }^{-2}$, $\forall \mathbf{x}\in M$,

(3) $v_{k,\Delta }(\mathbf{x})=0$ if $\Vert \boldsymbol{\theta }\left(
\mathbf{x}\right) -\boldsymbol{\theta }\left( \mathbf{x}_{0,k}\right) \Vert
>D_{K,\max }$.

Here, $C_{3},C_{4}$  are independent of the stencil diameter $D_{K,\max }$, but depend on the regularization parameter $\delta$.
\end{lemma}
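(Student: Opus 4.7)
The plan is to exploit the closed-form expression for the regularized PHS interpolant coefficients. Writing $\varphi = \sum_{k=1}^{K}\tilde{c}_k \tilde{\phi}_k$, the sampled values satisfy $\boldsymbol{\varphi}_{\mathbf{x}_0} = \tilde{\boldsymbol{\Phi}}\tilde{c}$ with $\tilde{c} = (\tilde{c}_1,\ldots,\tilde{c}_K)^{\top}$, because $\tilde{\phi}_k(\mathbf{x}_{0,j}) = \tilde{\boldsymbol{\Phi}}_{jk}$. Using the regularized inverse in (\ref{eqn:PhiIn}), the interpolant coefficients become $\mathbf{a} = \tilde{\boldsymbol{\Phi}}^{\dagger}_{\boldsymbol{\Lambda}_K}\tilde{\boldsymbol{\Phi}}\tilde{c}$. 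Setting $A := \tilde{\boldsymbol{\Phi}}^{\top}\boldsymbol{\Lambda}_K \tilde{\boldsymbol{\Phi}}$ and observing that $(A+\delta^2\mathbf{I})^{-1}A = \mathbf{I} - \delta^2(A+\delta^2\mathbf{I})^{-1}$, a direct manipulation yields the key identity
\[
\mathbf{a} - \tilde{c} = -\delta^2(A+\delta^2\mathbf{I})^{-1}\tilde{c},
\]
which exhibits the $\delta^2$ prefactor explicitly.

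For condition (1), since $\mathcal{I}_\phi \boldsymbol{\varphi}_{\mathbf{x}_0} - \varphi = \sum_k (a_k - \tilde{c}_k)\tilde{\phi}_k$, linearity of $\Delta_M$ gives $|\Delta_M \mathcal{I}_\phi \boldsymbol{\varphi}_{\mathbf{x}_0}(\mathbf{x}) - \Delta_M \varphi(\mathbf{x})| \leq \sum_k |a_k - \tilde{c}_k|\cdot|\Delta_M \tilde{\phi}_k(\mathbf{x})|$. The normalization $\tilde{\phi}_k(\mathbf{x}) = \phi(\|\boldsymbol{\theta}(\mathbf{x})/D_{K,\max} - \boldsymbol{\theta}(\mathbf{x}_{0,k})/D_{K,\max}\|)$ combined with the PHS Laplacian computation of (\ref{eqn:Lphi}) gives $|\Delta_M \tilde{\phi}_k(\mathbf{x})| \leq C_{\kappa,d} D_{K,\max}^{-2}$, with $C_{\kappa,d}$ depending only on $\kappa$ and $d$. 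Meanwhile, the coefficient residual satisfies $\|\mathbf{a} - \tilde{c}\|_\infty \leq \delta^2 \|(A+\delta^2\mathbf{I})^{-1}\|_\infty \max_k|\tilde{c}_k|$, where the operator norm depends on $\delta$ and the normalized stencil geometry but not on $D_{K,\max}$. Combining these two estimates produces the desired bound with $C_3 = C_3(\delta,K,\kappa,d)$.

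For condition (2), I write $v_{k,\Delta}(\mathbf{x}) = [\mathbf{u}(\mathbf{x})\,\tilde{\boldsymbol{\Phi}}^{\dagger}_{\boldsymbol{\Lambda}_K}]_k$, where $\mathbf{u}(\mathbf{x}) \in \mathbb{R}^{1\times K}$ has entries $u_k(\mathbf{x}) = \Delta_M \tilde{\phi}_k(\mathbf{x})$. The triangle inequality yields $\sum_k |v_{k,\Delta}(\mathbf{x})| \leq \|\mathbf{u}(\mathbf{x})\|_1 \,\max_{j}\sum_{k}\big|[\tilde{\boldsymbol{\Phi}}^{\dagger}_{\boldsymbol{\Lambda}_K}]_{jk}\big|$, and each factor is controlled as in (1): $\|\mathbf{u}(\mathbf{x})\|_1 \leq K C_{\kappa,d} D_{K,\max}^{-2}$, while the matrix factor is a finite $\delta$-dependent constant because the normalization keeps the entries of $\tilde{\boldsymbol{\Phi}}$ and $\boldsymbol{\Lambda}_K$ uniformly bounded in $D_{K,\max}$. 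Condition (3) is a localization statement: the PHS interpolant used to define $v_{k,\Delta}(\mathbf{x})$ is built only over the $K$-nearest stencil of the evaluation point, so if $\mathbf{x}_{0,k}$ lies outside this stencil, equivalently $\|\boldsymbol{\theta}(\mathbf{x}) - \boldsymbol{\theta}(\mathbf{x}_{0,k})\| > D_{K,\max}$, then $\mathbf{x}_{0,k}$ does not enter the interpolant and $v_{k,\Delta}(\mathbf{x}) = 0$ by construction.

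The main obstacle is controlling $\|(A+\delta^2\mathbf{I})^{-1}\|_\infty$ in condition (1). Because $A$ may be singular or nearly so for certain stencil geometries, the worst-case spectral bound $\delta^{-2}$ would cancel the $\delta^2$ prefactor and reduce the estimate to an $O(1)$ bound in $\delta$. The lemma sidesteps this by allowing $C_3$ (and $C_4$) to depend on $\delta$; a rigorous argument must therefore either restrict attention to $\tilde{c}$ in the range of $A$ --- where $(A+\delta^2\mathbf{I})^{-1}$ acts as a $\delta$-uniformly bounded operator --- or accept that $C_3(\delta)$ may blow up like $\delta^{-2}$ in the pathological regime. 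I expect this spectral analysis of $A$ on the normalized stencil to be the technically delicate step, whereas the $D_{K,\max}^{-2}$ scaling in (1) and (2) and the localization argument in (3) follow routinely once the explicit identity for $\mathbf{a} - \tilde{c}$ is in hand.
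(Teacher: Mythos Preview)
Your approach for property~(1) is essentially the same as the paper's: both exploit the identity $(A+\delta^2\mathbf{I})^{-1}A=\mathbf{I}-\delta^2(A+\delta^2\mathbf{I})^{-1}$ with $A=\tilde{\boldsymbol{\Phi}}^\top\boldsymbol{\Lambda}_K\tilde{\boldsymbol{\Phi}}$ to extract the explicit $\delta^2$ prefactor, and both isolate the $D_{K,\max}^{-2}$ scaling by passing from $\Delta_{\boldsymbol{\theta}}$ to $\Delta_{\tilde{\boldsymbol{\theta}}}$ on the normalized basis. The obstacle you flag---that $\|(A+\delta^2\mathbf{I})^{-1}\|$ may scale like $\delta^{-2}$---is exactly the gap the paper leaves open: it declares $C_3=\max_{\mathbf{x}}\|\Delta_{\tilde{\boldsymbol{\theta}}}\tilde{\boldsymbol{\phi}}(\mathbf{x})(A+\delta^2\mathbf{I})^{-1}\|_1$ and then verifies \emph{numerically} (Fig.~\ref{fig:c3_c4}) that this quantity behaves like $O(\delta^{-1})$ to $O(\delta^{-3/2})$, not $O(\delta^{-2})$. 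So the paper's argument for~(1) is explicitly a ``partial proof'' completed by numerical evidence, and you should be aware that no fully rigorous control of $C_3$ is claimed.

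The one place the paper does more than you is property~(2). You bound $\sum_k|v_{k,\Delta}|\le\|\mathbf{u}\|_1\|\tilde{\boldsymbol{\Phi}}^\dagger_{\boldsymbol{\Lambda}_K}\|_\infty$ and simply assert the matrix factor is a finite $\delta$-dependent constant; this is true but crude, giving at best $C_4=O(\delta^{-2})$. The paper instead diagonalizes $\boldsymbol{\Lambda}_K^{1/2}\tilde{\boldsymbol{\Phi}}\boldsymbol{\Lambda}_K^{1/2}=\mathbf{U}\boldsymbol{\Sigma}\mathbf{U}^\top$ and bounds the $2$-norm of $(A+\delta^2\mathbf{I})^{-1}\tilde{\boldsymbol{\Phi}}^\top\boldsymbol{\Lambda}_K$ via $\max_i|\sigma_i|/(\sigma_i^2+\delta^2/K)\le\sqrt{K}/(2\delta)$, yielding the sharper analytic bound $C_4=O(1/\delta)$ (again with numerics suggesting the true rate is yet smaller, around $O(\delta^{-1/3})$). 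Your argument suffices for the lemma as stated, but the spectral route is the genuinely new analytical content the paper adds for~(2). Property~(3) is, as you say, a locality-by-construction statement and is not argued separately in the paper.
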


We partially prove the above Lemma \ref{lem:rbfwk} and examine some of the results by numerical verification. See Appendix~\ref{app:A} in detail.

\begin{thm}
\label{thm:Dk}Let $\mathcal{I}_{\phi p}\mathbf{f}_{{\mathbf{x}_{0}}}\left(
\mathbf{x}\right)$ in equation (\ref{eqn:GPRFD_inerpolant}) be the gRBF-FD
approximation to $f\left( \mathbf{x}\right) $ using local polynomials up to
degree $l$ and radial basis functions with smoothness $l_{2}\geq l$. Let $%
\{u_{k,\Delta }(\mathbf{x})\}$ in (\ref{eqn:hatL}) be a local polynomial
reproduction of order $l$ and let $\{v_{k,\Delta }(\mathbf{x})\}$ in (\ref%
{eqn:hatL2}) be a local PHS weak reproduction.\ Then for any $f\in
C^{l+1}(M) $ with $l\geq 2$, there is an error bound%
\begin{equation}
\big|\Delta _{M}f(\mathbf{x})-\Delta _{M}\mathcal{I}_{\phi p}\mathbf{f}_{{%
\mathbf{x}_{0}}}\left( \mathbf{x}\right) \big|\leq C_{5}D_{K,\max }^{l-1}{(\mathbf{x}_0)}%
\big|f\big|_{C^{l+1}(M)},  \label{eqn:Ife}
\end{equation}%
for all $\mathbf{x}\in M$ and some $C_{5}>0$.
\end{thm}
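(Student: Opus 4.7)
The plan is to decompose the approximation error via the two-step structure of gRBF-FD and bound each piece separately with the two preceding lemmas. Using the identity in equation (\ref{eqn:GPRFD_inerpolant}), I would split
\[
\Delta_M \mathcal{I}_{\phi p}\mathbf{f}_{{\mathbf{x}_{0}}}(\mathbf{x}) = \Delta_M \mathcal{I}_p \mathbf{f}_{{\mathbf{x}_{0}}}(\mathbf{x}) + \Delta_M \mathcal{I}_\phi \mathbf{s}_{{\mathbf{x}_{0}}}(\mathbf{x})
\]
and then apply the triangle inequality to obtain
\[
\bigl|\Delta_M f(\mathbf{x}) - \Delta_M \mathcal{I}_{\phi p}\mathbf{f}_{{\mathbf{x}_{0}}}(\mathbf{x})\bigr| \leq \bigl|\Delta_M f(\mathbf{x}) - \Delta_M \mathcal{I}_p \mathbf{f}_{{\mathbf{x}_{0}}}(\mathbf{x})\bigr| + \bigl|\Delta_M \mathcal{I}_\phi \mathbf{s}_{{\mathbf{x}_{0}}}(\mathbf{x})\bigr|.
\]
Call the two right-hand contributions $E_1$ and $E_2$. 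The first piece $E_1$ is precisely the GMLS Laplacian approximation error, so the second inequality of Lemma~\ref{lem:inte} immediately gives $E_1 \leq \tilde{C}_2 D_{K,\max}^{l-1} |f|_{C^{l+1}(M)}$.

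For $E_2$ I would use the representation $\Delta_M \mathcal{I}_\phi \mathbf{s}_{{\mathbf{x}_{0}}}(\mathbf{x}) = \sum_{k=1}^K v_{k,\Delta}(\mathbf{x}) \, s(\mathbf{x}_{0,k})$ from equation (\ref{eqn:hatL2}) in Lemma~\ref{lem:rbfwk}, together with property~(2) of that lemma, to write
\[
E_2 \leq \Bigl(\max_{1\leq k\leq K} |s(\mathbf{x}_{0,k})|\Bigr) \sum_{k=1}^K |v_{k,\Delta}(\mathbf{x})| \leq C_4 D_{K,\max}^{-2} \max_{1\leq k\leq K} |s(\mathbf{x}_{0,k})|.
\]
Since the residual satisfies $s(\mathbf{x}_{0,k}) = f(\mathbf{x}_{0,k}) - \mathcal{I}_p \mathbf{f}_{{\mathbf{x}_{0}}}(\mathbf{x}_{0,k})$, the first inequality of Lemma~\ref{lem:inte} applied pointwise at each stencil node yields $\max_k |s(\mathbf{x}_{0,k})| \leq C_2 D_{K,\max}^{l+1} |f|_{C^{l+1}(M)}$. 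Substituting back gives $E_2 \leq C_2 C_4 D_{K,\max}^{l-1} |f|_{C^{l+1}(M)}$, and adding the two pieces produces the claim with $C_5 = \tilde{C}_2 + C_2 C_4$.

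The heart of this argument is a clean cancellation of exponents: the $D_{K,\max}^{-2}$ loss incurred by differentiating the PHS piece twice is exactly absorbed by the $D_{K,\max}^{l+1}$ gain from the smallness of the GMLS residual, which is why the enriched two-step interpolant inherits the same $D_{K,\max}^{l-1}$ rate as pure GMLS rather than being dragged down by the PHS bump. Properties (1) and (3) of Lemma~\ref{lem:rbfwk} are not directly invoked in this short calculation; their role appears to be structural, guaranteeing that $\{v_{k,\Delta}\}$ behave like legitimate localized reproduction weights so that property~(2) is meaningful in the stencil neighborhood. Consequently, the real obstacle does not lie in this theorem itself, which is essentially a triangle-inequality bookkeeping once the two lemmas are in hand, but rather in establishing Lemma~\ref{lem:rbfwk}(2) with a constant $C_4$ independent of the stencil diameter. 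That estimate requires tracking how the weighted ridge-regression inverse $\tilde{\boldsymbol{\Phi}}^\dagger_{\boldsymbol{\Lambda}_K}$ scales under the Monge-coordinate normalization in equation (\ref{eqn:thetd}), and is where the analytical work in the proof is truly concentrated.
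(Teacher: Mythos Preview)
Your argument is correct and is in fact a streamlined version of the paper's own proof. Both proofs start from the identity $\Delta_M f-\Delta_M\mathcal{I}_{\phi p}\mathbf{f}_{\mathbf{x}_0}=\Delta_M s-\Delta_M\mathcal{I}_\phi\mathbf{s}_{\mathbf{x}_0}$ with $s=f-\mathcal{I}_p\mathbf{f}_{\mathbf{x}_0}$; you then apply the triangle inequality immediately and bound $|\Delta_M s|$ via Lemma~\ref{lem:inte} and $|\Delta_M\mathcal{I}_\phi\mathbf{s}_{\mathbf{x}_0}|$ via property~(2) of Lemma~\ref{lem:rbfwk} together with the residual estimate. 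The paper instead inserts an auxiliary $\varphi\in\mathrm{Span}\{\tilde{\phi}_1,\ldots,\tilde{\phi}_K\}$, performs a three-way split that invokes property~(1) to control the regularization term $C_3\delta^2 D_{K,\max}^{-2}\max_k|\tilde{c}_k|$, and then minimizes over $\varphi$; but the minimization is ultimately bounded by evaluating at $\hat{\varphi}=0$, at which point the three-way split collapses exactly to your two terms (the middle term vanishes since all $\tilde{c}_k=0$). So your route is the paper's argument with the detour removed. What the paper's extra work buys is the sharper intermediate inequality displayed at the end of its proof, namely the infimum over $\hat{\varphi}$, which makes visible that the PHS step can in principle improve on the crude bound; your shorter path reaches the stated theorem more quickly but does not record that refinement. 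Your closing remark that the hard analysis is buried in Lemma~\ref{lem:rbfwk}(2) is exactly right and matches the paper's own treatment in Appendix~\ref{app:A}.
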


\begin{proof}

Define the residual function $s(\mathbf{x})=f(\mathbf{x})-\mathcal{I}_{p}%
\mathbf{f}_{{\mathbf{x}_{0}}}(\mathbf{x})$ for the first step using the GMLS
regression. Using the gRBF-FD interpolant (\ref{eqn:GPRFD_inerpolant}), we
have the error bound
\begin{eqnarray*}
\big|\Delta _{M}f(\mathbf{x})-\Delta _{M}\mathcal{I}_{\phi p}\mathbf{f}_{{%
\mathbf{x}_{0}}}\left( \mathbf{x}\right) \big| &=&\big|\Delta _{M}f(\mathbf{x%
})-\Delta _{M}\mathcal{I}_{p}\mathbf{f}_{{\mathbf{x}_{0}}}\left( \mathbf{x}%
\right) -\Delta _{M}\mathcal{I}_{\phi }\mathbf{s}_{{\mathbf{x}_{0}}}\left(
\mathbf{x}\right) \big| \\
&=&\big|\Delta _{M}s(\mathbf{x})-\Delta _{M}(\mathcal{I}_{\phi }\mathbf{s}_{{%
\mathbf{x}_{0}}})\left( \mathbf{x}\right) \big|,
\end{eqnarray*}%
where $\mathbf{s}_{{\mathbf{x}_{0}}}=s(\mathbf{x})|_{\mathbf{x}\in S_{%
\mathbf{x}_{0}}}\in \mathbb{R}^{K}$. Let$\ \varphi \in \mathrm{Span}\{\tilde{%
\phi}_{1},\ldots ,\tilde{\phi}_{K}\}$ be an arbitrary function spanned by
the $K$-dimensional subspace. Then the error bound becomes%
\begin{eqnarray}
&&\big|\Delta _{M}f(\mathbf{x})-\Delta _{M}\mathcal{I}_{\phi p}\mathbf{f}_{{%
\mathbf{x}_{0}}}\left( \mathbf{x}\right) \big|  \notag \\
&\leq &\big|\Delta _{M}s(\mathbf{x})-\Delta _{M}\varphi \left( \mathbf{x}%
\right) \big|+\big|\Delta _{M}\varphi (\mathbf{x})-\Delta _{M}(\mathcal{I}%
_{\phi }\boldsymbol{\varphi }_{{\mathbf{x}_{0}}})\left( \mathbf{x}\right) %
\big|+\big|\Delta _{M}(\mathcal{I}_{\phi }\boldsymbol{\varphi }_{{\mathbf{x}%
_{0}}})(\mathbf{x})-\Delta _{M}(\mathcal{I}_{\phi }\mathbf{s}_{{\mathbf{x}%
_{0}}})\left( \mathbf{x}\right) \big|  \notag \\
&\leq &\big|\Delta _{M}s(\mathbf{x})-\Delta _{M}\varphi \left( \mathbf{x}%
\right) \big|+\big|\Delta _{M}\varphi (\mathbf{x})-\Delta _{M}(\mathcal{I}%
_{\phi }\boldsymbol{\varphi }_{{\mathbf{x}_{0}}})\left( \mathbf{x}\right) %
\big|+\sum_{k=1}^{K}\left\vert v_{k,\Delta }(\mathbf{x})\right\vert |\varphi
\left( \mathbf{x}_{0,k}\right) -s(\mathbf{x}_{0,k})|  \notag \\
&\leq &\Vert \Delta _{M}s-\Delta _{M}\varphi \Vert _{L^{\infty }(\mathcal{D}%
)}+C_{3}\delta ^{2}D_{K,\max }^{-2}\max_{1\leq k\leq K}\left\vert
\tilde{c}_{k}\right\vert +C_{4}D_{K,\max }^{-2}\Vert \varphi -s\Vert _{L^{\infty }(%
\mathcal{D})},  \label{eqn:las}
\end{eqnarray}%
where $\mathcal{D}=B({\mathbf{x}_{0},}R_{K,\max }({\mathbf{x}_{0}}))$. In
the last inequality, we have used the properties in Lemma \ref{lem:rbfwk}.
Now we choose $\varphi $ to be%
\begin{equation*}
\varphi =\mathop{\textrm{arg min}}\limits_{\hat{\varphi}\in \mathrm{Span}\{%
\tilde{\phi}_{1},\ldots ,\tilde{\phi}_{K}\}}\Vert \Delta _{M}s-\Delta _{M}%
\hat{\varphi}\Vert _{L^{\infty }(\mathcal{D})}+C_{4}D_{K,\max }^{-2}\Vert
\hat{\varphi}-s\Vert _{L^{\infty }(\mathcal{D})}.
\end{equation*}%
Notice that the optimal value must be less than the objective function with $%
\hat{\varphi}=0$,%
\begin{equation}
\Vert \Delta _{M}s-\Delta _{M}\varphi \Vert _{L^{\infty }(\mathcal{D}%
)}+C_{4}D_{K,\max }^{-2}\Vert \varphi -s\Vert _{L^{\infty }(\mathcal{D}%
)}\leq \Vert \Delta _{M}s\Vert _{L^{\infty }(\mathcal{D})}+C_{4}D_{K,\max
}^{-2}\Vert s\Vert _{L^{\infty }(\mathcal{D})}\lesssim D_{K,\max }^{l-1}\big|%
f\big|_{C^{l+1}(M)}  \label{eqn:ins}
\end{equation}%
where the results in Lemma \ref{lem:inte} have been used.

Next we estimate the second error term of (\ref{eqn:las}). From (\ref%
{eqn:ins}), we see that
\begin{equation*}
\Vert \varphi -s\Vert _{L^{\infty }(\mathcal{D})}\lesssim D_{K,\max }^{l+1}%
\big|f\big|_{C^{l+1}(M)}.
\end{equation*}%
Since $\Vert s\Vert _{L^{\infty }(\mathcal{D})}\lesssim D_{K,\max }^{l+1}%
\big|f\big|_{C^{l+1}(M)}$\ using Lemma \ref{lem:inte}, we have that $\Vert
\varphi \Vert _{L^{\infty }(\mathcal{D})}\lesssim D_{K,\max }^{l+1}\big|f%
\big|_{C^{l+1}(M)}$ and each $\left\vert \tilde{c}_{k}\right\vert \lesssim
D_{K,\max }^{l+1}\big|f\big|_{C^{l+1}(M)}$. Then the second term in (\ref%
{eqn:las}) can be bounded by%
\begin{equation}
C_{3}\delta ^{2}D_{K,\max }^{-2}\max_{1\leq k\leq K}\left\vert \tilde{c}%
_{k}\right\vert \lesssim \delta ^{2}D_{K,\max }^{l-1}\big|f\big|%
_{C^{l+1}(M)}.  \label{eqn:sec2}
\end{equation}

Substituting (\ref{eqn:ins}) and (\ref{eqn:sec2}) into (\ref{eqn:las}), we
arrive at the desired error bound in (\ref{eqn:Ife}). In particular, if the
second error term (\ref{eqn:sec2}) is negligible, then the error bound
becomes
\begin{equation*}
\big|\Delta _{M}f(\mathbf{x})-\Delta _{M}\mathcal{I}_{\phi p}\mathbf{f}_{{%
\mathbf{x}_{0}}}\left( \mathbf{x}\right) \big|\leq \inf_{\hat{\varphi}\in
\mathrm{Span}\{\tilde{\phi}_{1},\ldots ,\tilde{\phi}_{K}\}}\left\{ \Vert
\Delta _{M}s-\Delta _{M}\hat{\varphi}\Vert _{L^{\infty }(\mathcal{D}%
)}+C_{4}D_{K,\max }^{-2}\Vert \hat{\varphi}-s\Vert _{L^{\infty }(\mathcal{D}%
)}\right\} \leq C_{5}D_{K,\max }^{l-1}\big|f\big|_{C^{l+1}(M)}.
\end{equation*}

\end{proof}

\comment{%

does
not
allow
us
to
use
the
concept
of
native
space,
or
the
interpolation
error
in
Chap
of
Wendland book.%
}


\subsubsection{Error estimate in terms of the number of data}

We now estimate the error bound in terms of the number of data.

\begin{lemma}
\label{lem:hK} For some integer $K$ and define the stencil diameter $%
D_{K,\max }\left( \mathbf{x}_{0}\right) $\ and stencil radius $R_{K,\max
}\left( \mathbf{x}_{0}\right) $ as in Definition \ref{def:srad} for each
base point $\mathbf{x}_{0}$. Assume that there exists a constant $C_{K}>0$\
such that $R_{K,\max }({\mathbf{x}_{i}})\leq D_{K,\max }({\mathbf{x}_{i}}%
)\leq C_{K}R_{K,\max }({\mathbf{x}_{i}})$ for all stencils $\{S_{\mathbf{x}%
_{i}}\}_{i=1}^{N}$. Let ${\mathbf{x}_{1},\ldots ,\mathbf{x}_{N}}$\ be i.i.d.
random samples from the distribution $Q$ with the density $q\in L^{1}(M)$
such that $\int_{M}q(\mathbf{x})dV(\mathbf{x})=1$\ and $q\geq q_{\min }>0$
for some positive $q_{\min }$. Then, we have for each ${\mathbf{x}_{i}}$ and all $\delta>0$,%
\begin{equation*}
\mathbb{P}_{\mathbf{X}_{M}\sim Q}(R_{K,\max }{(\mathbf{x}_i)}>\delta )\leq \exp (-q_{\min
}C_{d}(N-K)\delta ^{d}),
\end{equation*}%
where $C_{d}$ is a constant depending on the dimension $d$. Moreover, with
probability higher than $1-\frac{1}{N}$, we have
\begin{equation*}
R_{K,\max }({\mathbf{x}_{i}})=O\left( \left( \frac{\log N}{N}\right) ^{\frac{1}{d}}\right) ,%
\text{ \ \ }D_{K,\max }({\mathbf{x}_{i}})=O\left( \left( \frac{\log N}{N}\right) ^{\frac{1}{d}%
}\right) ,
\end{equation*}%
where the constant in the big-oh is independent of $N$. Subsequently, we
also have
\begin{equation*}
ED_{K,\max }({\mathbf{x}_{i}})=O\left( \left( \frac{\log N}{N}\right) ^{%
\frac{1}{d}}\right) ,\text{ \ \ }\sigma \left( D_{K,\max }({\mathbf{x}_{i}}%
)\right) =O\left( \left( \frac{\log N}{N}\right) ^{\frac{1}{d}}\right) ,
\end{equation*}%
where the expectation $E$ and the standard deviation $\sigma $ are taken
with respect to the distribution of $D_{K,\max }({\mathbf{x}_{i}})$ at a
fixed ${\mathbf{x}_{i}}$ across different samplings of the point cloud data $%
\left\{ \mathbf{x}_{i}\right\} _{i=1}^{N}\subset M$.
\end{lemma}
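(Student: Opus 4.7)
The plan is to establish the three claims in sequence, with the binomial tail bound being the core ingredient from which the rest follows. The key observation is that the event $\{R_{K,\max}(\mathbf{x}_i)>\delta\}$ is exactly the event that fewer than $K-1$ of the $N-1$ other sample points satisfy $\|\boldsymbol{\theta}(\mathbf{x}_j)-\boldsymbol{\theta}(\mathbf{x}_i)\|\leq\delta$. Writing $X\sim\mathrm{Bin}(N-1,p)$ for the count of such samples, with
\[
p=Q\!\bigl(\{\mathbf{x}\in M:\|\boldsymbol{\theta}(\mathbf{x})-\boldsymbol{\theta}(\mathbf{x}_i)\|\leq\delta\}\bigr)\geq q_{\min}\,C_d\,\delta^d
\]
by the density lower bound and the standard small-ball volume comparison on a smooth $d$-dimensional manifold (valid for $\delta$ below a geometric threshold determined by the injectivity radius and curvature of $M$), the task reduces to bounding $\mathbb{P}(X\leq K-2)$. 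A multiplicative Chernoff bound for the binomial gives
\[
\mathbb{P}(X\leq K-2)\leq \exp\!\bigl(-c(N-1)p\bigr)\leq \exp\!\bigl(-c\,q_{\min}C_d(N-K)\delta^d\bigr)
\]
in the Chernoff regime $(N-1)p\gtrsim K$; here the inequality $N-1\geq N-K$ is used to recast the exponent into the form stated in the lemma, and the numerical constant $c$ is absorbed into $C_d$. Setting the right-hand side equal to $1/N$ and solving for $\delta$ yields $\delta=O((\log N/N)^{1/d})$, so that $R_{K,\max}(\mathbf{x}_i)\leq\delta$ with probability at least $1-1/N$. The corresponding bound for $D_{K,\max}(\mathbf{x}_i)$ is then immediate from the hypothesis $D_{K,\max}\leq C_K R_{K,\max}$.

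For the moment bounds I would integrate the tail via the layer-cake representation,
\[
E\,D_{K,\max}(\mathbf{x}_i)=\int_0^\infty \mathbb{P}\bigl(D_{K,\max}(\mathbf{x}_i)>t\bigr)\,dt,
\]
splitting the integral at the threshold $t_N=C(\log N/N)^{1/d}$. The contribution on $[0,t_N]$ is trivially at most $t_N=O((\log N/N)^{1/d})$; on $[t_N,\infty)$ the tail bound (composed with $D\leq C_K R$) gives an integrand of the form $\exp(-c'Nt^d)$, and the substitution $u=Nt^d$ shows the tail integral is also $O((\log N/N)^{1/d})$. The same layer-cake argument applied to $E[D_{K,\max}^2]=\int_0^\infty 2t\,\mathbb{P}(D_{K,\max}>t)\,dt$ yields $O((\log N/N)^{2/d})$, so that $\sigma(D_{K,\max})\leq(E[D_{K,\max}^2])^{1/2}=O((\log N/N)^{1/d})$.

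The principal obstacle is the tail bound itself. A naive union-bound argument over the $\binom{N-1}{K-2}$ choices of which samples lie inside $B_\delta$ introduces a spurious polynomial factor $\sim N^{K-2}$, which can only be absorbed into the exponential once $\delta\gtrsim(\log N/N)^{1/d}$. The cleanest way around this is to work directly in the Chernoff regime $(N-1)p\gtrsim K$, which by monotonicity is automatic once $\delta$ lies above this same scale---precisely the range in which the lemma is applied. A secondary technical point is that the small-ball volume comparison $Q(B_\delta)\gtrsim\delta^d$ is only valid for $\delta$ below a geometric threshold depending on the intrinsic geometry of $M$; since $\delta\to 0$ as $N\to\infty$, this is harmless asymptotically and merely contributes to the implicit constants in the big-oh.
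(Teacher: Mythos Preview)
Your approach is correct and reaches the same conclusions, but differs from the paper's in two places.

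For the tail bound, the paper gives a shorter argument: it passes to geodesic distance via $\|\boldsymbol{\theta}(\mathbf{x}_{0,k})-\boldsymbol{\theta}(\mathbf{x}_0)\|\leq d_g(\mathbf{x}_{0,k},\mathbf{x}_0)$, then bounds the probability that the $K$-th geodesic nearest neighbor lies outside $B_\delta(\mathbf{x}_0)$ directly by $(1-\int_{B_\delta}q\,dV)^{N-K}\leq(1-q_{\min}C_d\delta^d)^{N-K}\leq\exp(-q_{\min}C_d(N-K)\delta^d)$, invoking Croke's small-ball volume bound for the estimate $\mathrm{Vol}(B_\delta)\geq C_d\delta^d$. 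No Chernoff is used; the exponent appears directly from $(1-x)^m\leq e^{-mx}$. Your binomial/Chernoff route is more explicit about the combinatorics and arguably more careful (the paper treats the ordered points $\mathbf{x}_{0,K+1},\ldots,\mathbf{x}_{0,N}$ as if independent when factoring), at the cost of an absorbed constant and the Chernoff-regime caveat you note.

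For the moments, the paper exploits compactness of $M$ to avoid the full layer-cake integral: since $D_{K,\max}\leq a_{\max}:=\mathrm{diam}(M)$, it simply writes $E[D_{K,\max}]\leq C_K\delta + a_{\max}\,\mathbb{P}(D_{K,\max}>C_K\delta)$ and chooses $\delta=(\log N/(dq_{\min}C_d(N-K)))^{1/d}$ so that both terms are $O((\log N/N)^{1/d})$; the variance is handled identically with $D^2\leq a_{\max}^2$. Your layer-cake argument is equally valid and does not need boundedness, but is more machinery than required here.
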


\begin{proof}
See Appendix \ref{app:B}.
\end{proof}

Using the results in Theorem \ref{thm:Dk} and Lemma \ref{lem:hK}, we arrive
at the following the error bound in terms of the number of data.

\begin{thm}\label{thm:fee}
Suppose that the conditions and assumptions in Theorem \ref{thm:Dk} and
Lemma \ref{lem:hK} hold. Then for any $f\in C^{l+1}(M)$ with $l\geq 2$,
there is an error bound%
\begin{equation}
\big|\Delta _{M}f(\mathbf{x})-\Delta _{M}\mathcal{I}_{\phi p}\mathbf{f}_{{%
\mathbf{x}_{0}}}\left( \mathbf{x}\right) \big|=O\left( \left( \frac{\log N}{N%
}\right) ^{\frac{l-1}{d}}\right) ,  \label{eqn:bodO}
\end{equation}%
for all $\mathbf{x}\in M$, where the constant in the big-oh error bound is
independent of $N$.
\end{thm}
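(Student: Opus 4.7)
The plan is to combine Theorem \ref{thm:Dk} with Lemma \ref{lem:hK} essentially by direct substitution, with the only subtlety being the need to make the probabilistic bound on the stencil diameter hold uniformly over all base points. First, for any base point $\mathbf{x}_0 \in \mathbf{X}_M$ and any $\mathbf{x} \in M$, Theorem \ref{thm:Dk} gives the deterministic bound
\begin{equation*}
\big|\Delta_M f(\mathbf{x}) - \Delta_M \mathcal{I}_{\phi p} \mathbf{f}_{\mathbf{x}_0}(\mathbf{x})\big| \leq C_5\, D_{K,\max}^{l-1}(\mathbf{x}_0)\, |f|_{C^{l+1}(M)},
\end{equation*}
so the task reduces to showing that $D_{K,\max}(\mathbf{x}_0)$ is of order $(\log N/N)^{1/d}$ with high probability, uniformly in the choice of $\mathbf{x}_0$.

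Next, I would sharpen the tail probability in Lemma \ref{lem:hK}: for any fixed $\mathbf{x}_i$, the exponential bound
\begin{equation*}
\mathbb{P}_{\mathbf{X}_M \sim Q}(R_{K,\max}(\mathbf{x}_i) > \delta) \leq \exp(-q_{\min} C_d (N-K)\delta^d)
\end{equation*}
can be made smaller than $1/N^2$ by choosing $\delta = C\,(\log N/N)^{1/d}$ with $C$ depending only on $q_{\min}$, $C_d$, and $d$. A union bound over the $N$ points in $\mathbf{X}_M$ then yields, with probability at least $1 - 1/N$, the uniform estimate $R_{K,\max}(\mathbf{x}_i) = O((\log N/N)^{1/d})$ for all $i$, and the quasi-uniformity hypothesis $D_{K,\max}(\mathbf{x}_i) \leq C_K R_{K,\max}(\mathbf{x}_i)$ upgrades this to the same rate for $D_{K,\max}$.

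Finally, I would substitute this uniform stencil-diameter estimate into the bound from Theorem \ref{thm:Dk}. Raising to the $(l-1)$ power gives $D_{K,\max}^{l-1}(\mathbf{x}_0) = O((\log N/N)^{(l-1)/d})$ with a constant independent of $N$ and of the choice of base point, which combined with the $C_5|f|_{C^{l+1}(M)}$ factor yields the claimed rate \eqref{eqn:bodO} for every $\mathbf{x} \in M$. I do not expect a real obstacle here: the estimate in Theorem \ref{thm:Dk} is already pointwise and deterministic in $D_{K,\max}$, and Lemma \ref{lem:hK} is already formulated in exactly the right form. The only minor care required is the union-bound step (to pass from the ``for each $\mathbf{x}_i$'' probabilistic statement in Lemma \ref{lem:hK} to a simultaneous statement), which costs only a factor of $N$ in the failure probability and hence only an inconsequential constant inside the logarithm.
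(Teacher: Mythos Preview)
Your proposal is correct and follows exactly the approach the paper takes: the paper states Theorem~\ref{thm:fee} without a formal proof, prefacing it only with the remark that one combines Theorem~\ref{thm:Dk} and Lemma~\ref{lem:hK}. Your write-up is in fact more careful than the paper, which leaves the union-bound step (passing from a per-point high-probability bound to a simultaneous one over all $N$ base points) entirely implicit.
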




\section{Numerical experiments}

\label{sec:numerical}

To support the performance of gRBF-FD, we present numerical
results for solving the screened Poisson equations in \eqref{eqn:poisson} across various manifolds, identified by randomly sampled point clouds.
We also compare with the results using the GMLS method.
For all examples below, the manifolds are assumed to be known with analytic tangent planes. We fix the PHS parameter to be $\kappa=3$. For both GMLS and gRBF-FD, we apply the auto-tuned $K$-nearest neighbors  starting from an initial value $K_0$,  as detailed in
Section \ref{sec:autoK} and Algorithm \ref{algo:intrin-LB_ad}.

This section is organized as follows. In Sections \ref{sec:rbc} and \ref{sec:bsp}, we examine the numerical performance on two 2D smooth surfaces: a red blood cell (RBC) and a bumpy sphere (BSP), respectively. In Section \ref{sec:flattori},
we report the numerical results on two higher-dimensional manifolds, a 3D flat torus
embedded in $\mathbb{R}^{12}$ and a 4D flat torus embedded in $\mathbb{R}^{16}$.
The following numerical results demonstrate that our gRBF-FD approach can persistently provide stable and convergent
solutions with smaller errors than GMLS across multiple trials of randomly sampled data points.


\subsection{Red blood cell}

\label{sec:rbc} Consider a red blood cell (RBC) with the parametrization:
\begin{equation}
\mathbf{x} = \left(r\cos\theta\cos\phi,r\cos\theta\sin\phi, \frac{1}{2}%
\sin\theta \left( c_0+c_2\cos^2\theta+c_4\cos^4\theta\right) \right),
\label{eqn:rbc}
\end{equation}
where $-\pi/2 \leq \theta \leq \pi/2$, $-\pi \leq \phi < \pi$, $r=3.91/3.39$%
, $c_0=0.81/3.39$, $c_2=7.83/3.39$ and $c_4=-4.39/3.39$, which is the same
as the RBC in \cite{fuselier2013high}. In our numerical experiment, we set
the true solution to be $f = \cos^2 (\theta)$ and then we can calculate the RHS $h:=\left( 1-\Delta _{M}\right)
f$. Next, we approximate the numerical solution  for the PDE problem  subjected
to the manufactured $h$.
Numerically, the points $%
\{\mathbf{x}_i\}_{i=1}^N$ are generated from the parametrization %
\eqref{eqn:rbc} using randomly sampled $\{\theta_i,\phi_i\}_{i=1}^N$.


In Fig. \ref{fig:rbc}, we plot the \textbf{FE}s and \textbf{IE}s
as a function of $N$ over 4 independent trials. It can be seen that the \textbf{%
FE}s  decrease on the order of $N^{-(l-1)/2}$ for both
GMLS and gRBF-FD. This error rate is in good agreement with the theoretical one in \eqref{eqn:bodO}. It can be further observed that the \textbf{IE}s for
$l=2$ and $l=4$ decay with respective rates $N^{-1}$ and $N^{-2}$, which are
half an order faster than the corresponding \textbf{FE}s. This reveals that the
\textbf{IE}s possess the super-convergence phenomenon, as also observed in
\cite{liang2013solving,jiang2024generalized,li2024generalized}. Moreover,
it can be seen that the \textbf{IE}s of gRBF-FD are smaller than those of GMLS by  factors of approximately $%
0.3$ and $0.5$ for $l=2$ and $l=4$, respectively.
In particular, for $l=4$,
gRBF-FD achieves smaller \textbf{IE}s
than GMLS even though their \textbf{FE}s are comparable.

\begin{figure*}[htbp]
\centering
\begin{tabular}{cc}
{(a) \textbf{FE}} & {(b) \textbf{IE} } \\
\includegraphics[width=2.8
		in, height=2.3 in]{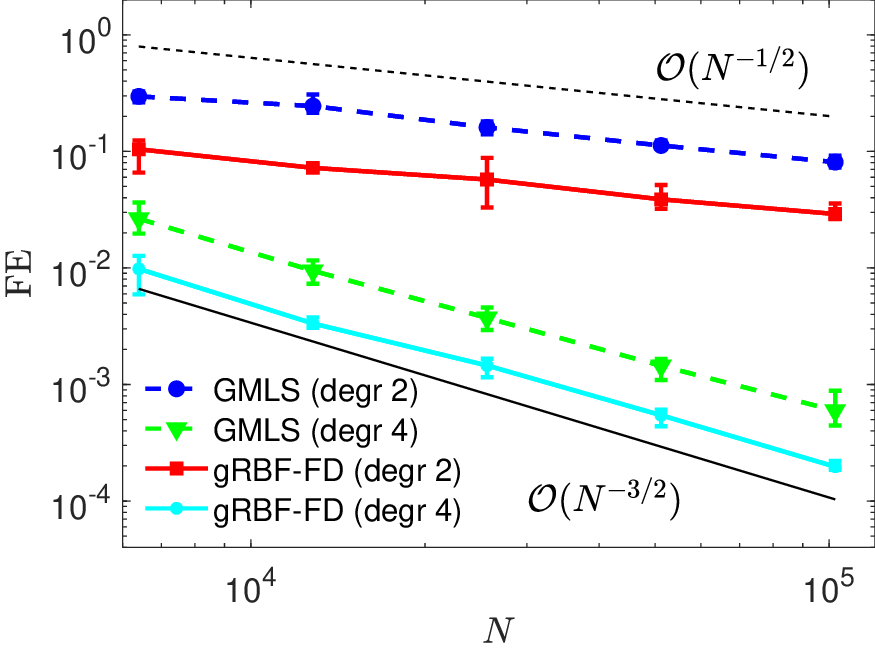} &
\includegraphics[width=2.8
		in, height=2.3 in]{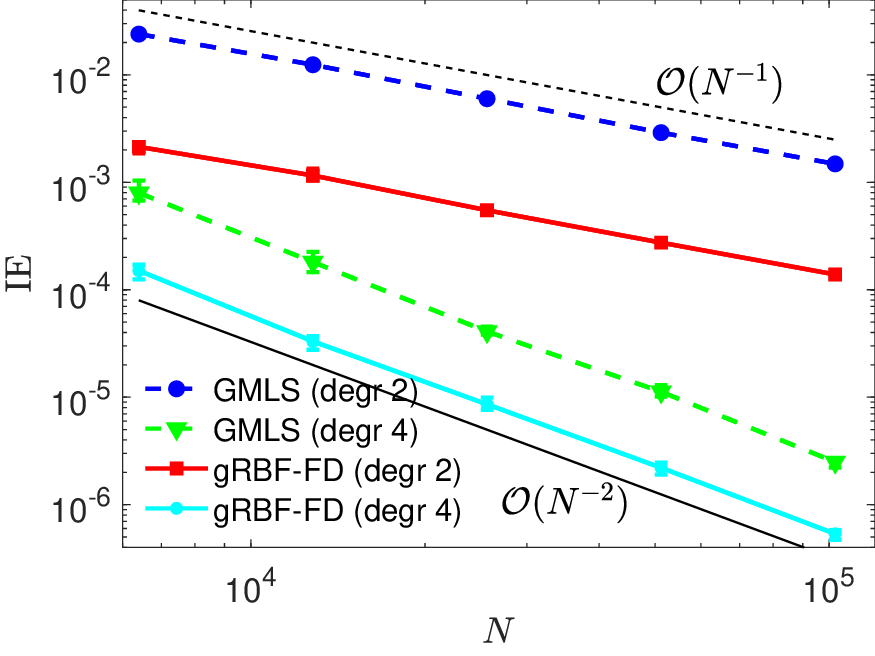}%
\end{tabular}%
\caption{\textbf{2D RBC in} $\mathbb{R}^3$ using random data. Shown are (a)
\textbf{FE}s vs. the number of points $N$ and (b) \textbf{IE}s vs. $N$. We show the results using different polynomial
degrees $l=2,4$. We fix the PHS parameter $\protect\kappa=3$ and the initial $K_0=40$.}
\label{fig:rbc}
\end{figure*}

\subsection{Bumpy sphere}
\label{sec:bsp}
We consider a smooth but complex surface, bumpy sphere (BSP), parameterized by
	\begin{equation}
		\textbf{x} = (x^1,x^2,x^3) = \left(r(\theta,\phi)\sin\theta\cos\phi,r(\theta,\phi)\sin\theta\sin\phi, r(\theta,\phi)\cos\theta \right),
		\label{eqn:psp}
	\end{equation}
	where $0 \leq \theta \leq \pi$, $0 \leq \phi < 2\pi$ and $r(\theta,\phi) = 1 + 0.1(\sin^7 4\theta)(\sin4\phi)$. This surface exhibits $C^6$ smoothness at the two poles and $C^\infty$ smoothness elsewhere. Its geometric complexity arises from the rapidly varying curvature in some regions (see Fig. \ref{fig:bsp}(a)). In our numerical experiment, we set the true solution to be the third coordinate of the surface $f=x^3$ and then construct the RHS $h := \left( 1-\Delta _{M}\right)
	f$. The points are generated randomly from the parameterization \eqref{eqn:psp} in intrinsic coordinates, where $(\theta,\phi)$ are i.i.d. sampled with the density $p(\theta,\phi) = \frac{\sin \theta}{4\pi}$ on $\left[0,\pi\right]\times \left[0,2\pi\right)$.

	
	In Fig. \ref{fig:bsp} (a), we show the surface of bumpy sphere as well as the true solution with its value color coded. In panels (b)(c), we plot \textbf{FE}s and \textbf{IE}s  as functions of $N$ over 4 independent trials. It can be seen that  \textbf{FE}s for both GMLS and gRBF-FD decay on the order of $N^{-(l-1)/2}$, which agrees with the theoretical prediction. Again, the \textbf{IE}s still decay half an order faster than the \textbf{FE}s. Moreover, gRBF-FD achieves smaller errors than GMLS in both operator and solution approximation.
	
	\begin{figure*}[htbp]
		\centering
		\begin{tabular}{ccc}
			{(a) Bumpy sphere} & {(b)\textbf{FE}} & {(c) \textbf{IE}} \\
			\includegraphics[width=2.35
			in, height=1.7 in]{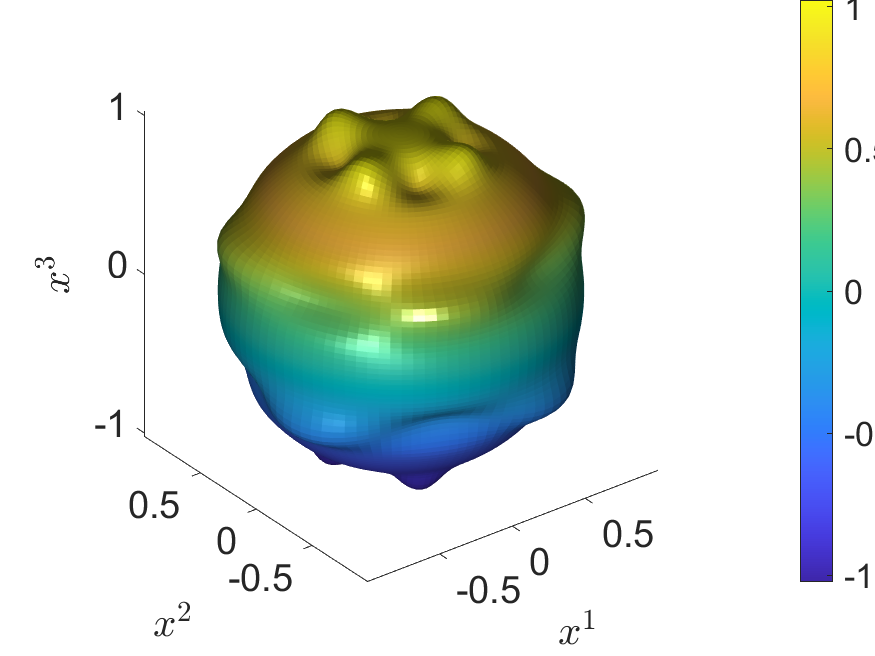} &
			\includegraphics[width=1.8
			in, height=1.7 in]{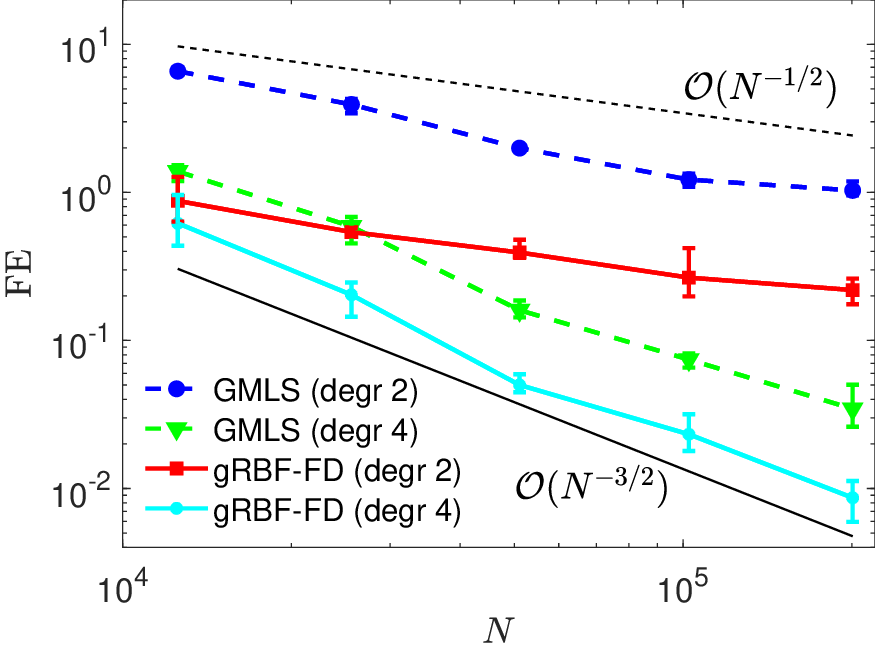} &
			\includegraphics[width=1.8
			in, height=1.7 in]{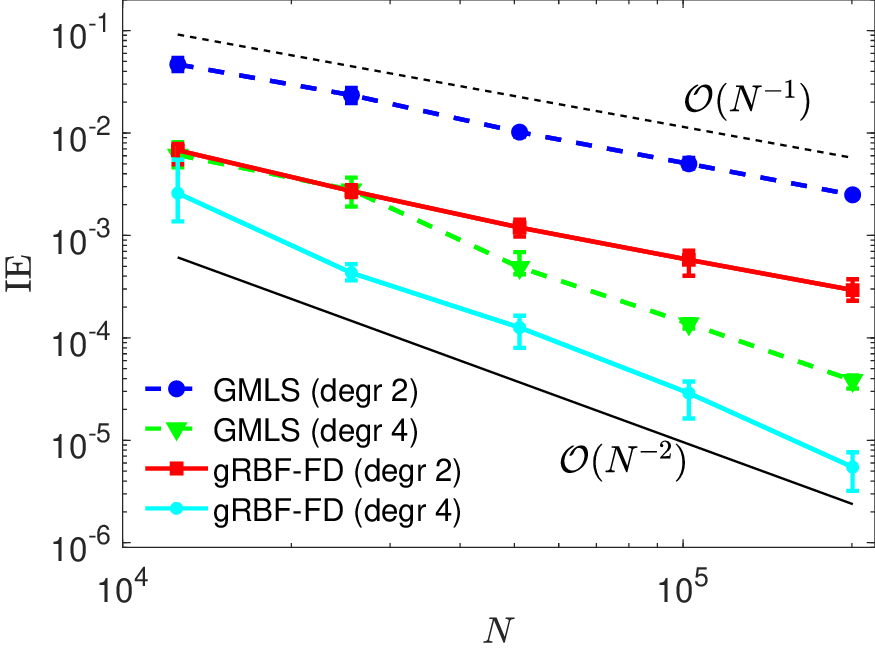}%
		\end{tabular}%
		\caption{\textbf{2D BSP in} $\mathbb{R}^3$ using random data. (a) The true solution $f=x^3$ with its value color coded. Shown are
			(b) \textbf{FE}s vs. $N$ and
			(c) \textbf{IE}s vs. $N$. In both panels (b) and (c), we show
			the results with polynomial degree $l=2,4$. We fix the PHS parameter $\protect\kappa=3$ and the initial $K_0=40$.}
		\label{fig:bsp}
	\end{figure*}

\subsection{Flat tori}

\label{sec:flattori}

We consider a $3$-dimensional flat torus embedded in $\mathbb{R}^{12}$ with
the parameterization,
\begin{equation}  \label{eqn:flattorus}
\mathbf{x}=\frac{1}{\sqrt{1^2 +2^{2}}}\left(
\begin{array}{cccc}
\cos (\phi_{1}), & \sin (\phi_{1}), & \cos (2\phi_1), & \sin (2\phi_{1}), \\
\cos (\phi_{2}), & \sin (\phi_{2}), & \cos (2\phi_2), & \sin (2\phi_{2}), \\
\cos (\phi_{3}), & \sin (\phi_{3}), & \cos (2\phi_3), & \sin (2\phi_{3})%
\end{array}
\right) ,
\end{equation}%
with $0\leq \phi_{1},\phi_2,\phi_3< 2\pi$. The Riemannian metric is given by
a $3\times 3$ identity matrix $\mathbf{I}_{3}$. The true solution $f$ is set
to be $f =\sin (\phi_1) \sin (\phi_2) \sin (\phi_3) $. Numerically, the
points $\{\mathbf{x}_i\}_{i=1}^N$ are generated from the parametrization %
\eqref{eqn:flattorus} using randomly sampled $\{\phi_{1},\phi_{2},\phi_{3}\}_{i=1}^N$ with the
uniform distribution on $\left[0,2\pi\right)^3$.
We use the degree $l=2,4$ and the initial $K_0=60$ nearest neighbors for illustrating the
convergence of solutions.



As shown in Fig. \ref{fig:3dflat}(a),  the \textbf{FE}s  decay at the rate  $N^{-(l-1)/3}$ for both GMLS and gRBF-FD, which is consistent
with the theory in \eqref{eqn:bodO} for dimension $d=3$. As shown in Fig. \ref{fig:3dflat}(b), the solutions again
show super-convergence, that is, \textbf{IE}s decay with  respective rates $N^{-2/3}$ and $N^{-4/3}$ for $l=2$ and $%
l=4$. Moreover, the \textbf{IE}s of gRBF-FD are approximately $30\%$ and $%
25\%$ of the GMLS errors for $l=2$ and $l=4$, respectively.

\begin{figure*}[htbp]
\centering
\begin{tabular}{cc}
{(a) \textbf{FE}} & {(b) \textbf{IE} } \\
\includegraphics[width=2.8
		in, height=2.3 in]{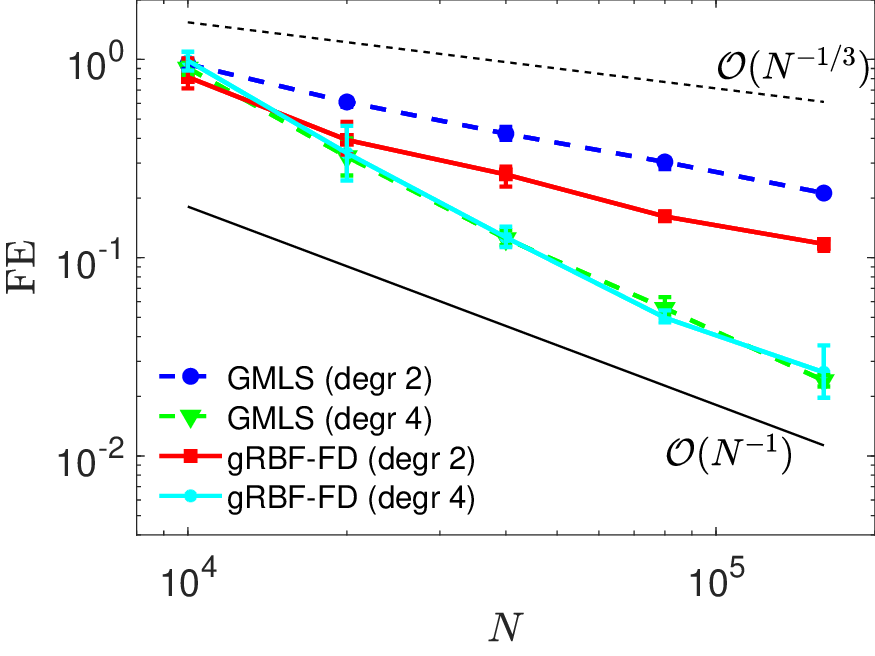} &
\includegraphics[width=2.8
		in, height=2.3 in]{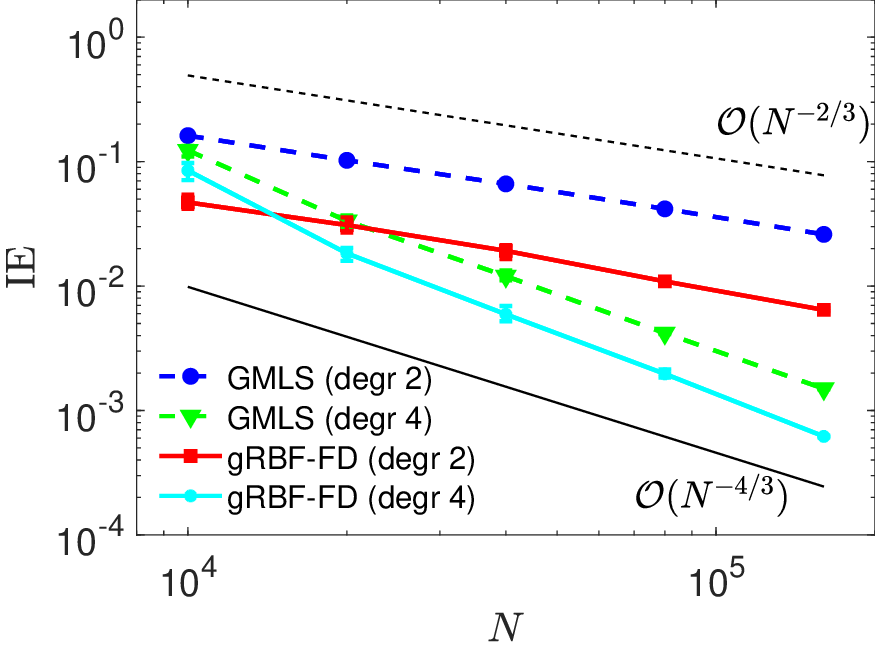}%
\end{tabular}%
\caption{\textbf{3D flat torus in} $\mathbb{R}^{12}$ using random data.
Shown are (a) \textbf{FE}s vs. $N$ and (b) \textbf{IE}s
vs. $N$. In both panels (a) and (b), we show the results with
polynomial degree $l=2,4$. We fix the PHS parameter $\protect\kappa=3$ and the initial $%
K_0=60$.}
\label{fig:3dflat}
\end{figure*}

We further consider a $4$-dimensional flat torus embedded in $\mathbb{R}^{16}$
with the parameterization,
\begin{equation}  \label{eqn:flattorus2}
\mathbf{x}=\frac{1}{\sqrt{1^2 +2^{2}}}\left(
\begin{array}{cccc}
\cos (\phi_{1}), & \sin (\phi_{1}), & \cos (2\phi_1), & \sin (2\phi_{1}), \\
\cos (\phi_{2}), & \sin (\phi_{2}), & \cos (2\phi_2), & \sin (2\phi_{2}), \\
\cos (\phi_{3}), & \sin (\phi_{3}), & \cos (2\phi_3), & \sin (2\phi_{3}), \\
\cos (\phi_{4}), & \sin (\phi_{4}), & \cos (2\phi_4), & \sin (2\phi_{4}),%
\end{array}
\right) ,
\end{equation}%
where $0\leq \phi_{1},\phi_2,\phi_3, \phi_{4}< 2\pi$. The Riemannian metric
is given by a $4\times 4$ identity matrix $\mathbf{I}_{4}$. The true
solution $f$ is set to be $f =\sin (\phi_1) \sin (\phi_2) \sin (\phi_3) \sin
(\phi_{4}) $.
We did not test with polynomial degree $l=4$ due to the large number of monomial basis functions  required compared to the 2D examples above.
We use the polynomial degree $l=3$ and the initial stencil size $K_0=75$ nearest neighbors for illustrating the
convergence of solutions. It can be seen from Fig. \ref{fig:4dflat} that \textbf{FE}s and \textbf{IE}s both decay with the rate $N^{-1/2}$, which agrees with the theory in \eqref{eqn:bodO}.
The \textbf{IE} of gRBF-FD is still smaller than that of GMLS, despite its larger \textbf{FE}.

Moreover, we numerically examine the stability of both approaches  when $N=40000$. In our implementation, we use the command \texttt{eigs($\boldsymbol{L}_{\mathbf{X}_M}$,6,10)} in MATLAB to compute the leading six eigenvalues close to 10. Then the leading eigenvalues of GMLS are 0, -1.30, -1.30, -1.30, -1.30, -1.30 while the leading ones of gRBF-FD are 0, -1.21, -1.21, -1.21, -1.21, -1.21. No spurious eigenvalues are observed in the right half of the complex plane. These numerical eigenvalues are approximations of the analytic true eigenvalues of the 4D flat torus, 0, -1, -1, -1, -1, -1 (see e.g., \cite{harlim2023radial}).





\begin{figure*}[htbp]
\centering
\begin{tabular}{cc}
{(a) \textbf{FE}} & {(b) \textbf{IE} } \\
\includegraphics[width=2.8
		in, height=2.3 in]{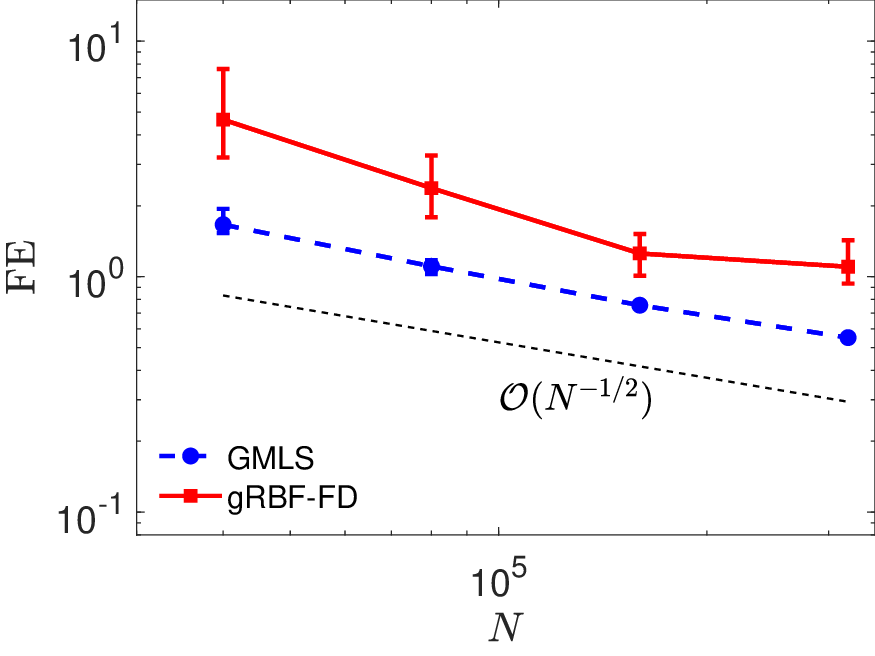} &
\includegraphics[width=2.8
		in, height=2.3 in]{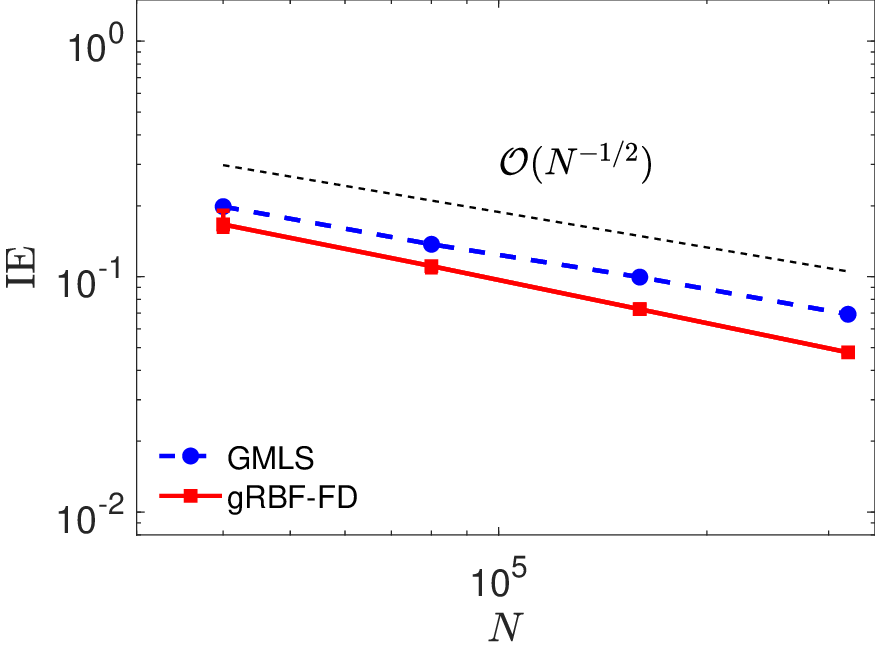}%
\end{tabular}%
\caption{\textbf{4D flat torus in} $\mathbb{R}^{16}$ using random data.
Shown are (a) \textbf{FE}s vs.  $N$ and (b) \textbf{IE}s
vs. $N$. In both panels (a) and (b), we fix the polynomial degree $l=3$, the PHS
parameter $\protect\kappa=3$ and the initial stencil size $K_0=75$.}
\label{fig:4dflat}
\end{figure*}

\subsection{Bunny and Armadillo models}


We now consider solving the elliptic problems on the Bunny and Armadillo models which are both two-dimensional surfaces
embedded in $\mathbb{R}^{3}$. The data of these two models are downloaded from the Stanford 3D Scanning Repository \cite{Stanford3d}.
The original dataset of the Bunny comprises a triangle mesh with 34,817 vertices and the Armadillo comprises a triangle mesh with 172,974 vertices.
To resolve the singular regions of the original dataset, we first generate new meshes of the surfaces using the Marching Cubes algorithm \cite{Marchingcubes} that is implemented  in MeshLab \cite{MeshLab}. Notice that the Marching Cubes algorithm does not smooth the surface.
We will compare the solutions among surface finite element method (FEM), GMLS, and gRBF-FD on smoothed surfaces for both Bunny and Armadillo models. Then, we  use the Screened Poisson surface reconstruction algorithm to smooth the surfaces that fit the point clouds of models. Next, we apply the Poisson-disk sampling algorithm
via the Meshlab to generate point clouds over the surfaces. Subsequently, new meshes could be obtained by using
the Marching Cubes algorithm again. As a result, we generate a point cloud of $N = 34,596$ points over the Stanford Bunny model and generate $N = 172,974$ points over the Stanford Armadillo model. For the Armadillo model, we  normalize its dataset into a unit box $[0,1]^3$ for  computational convenience. 
Finally, after all above processes, we use MeshLab to clean the vertices and meshes of the two models for further application in FEM.

\begin{figure*}[htbp]
\centering
\begin{tabular}{ccc}
{(a) FEM solution} & {(b) GMLS } & {(c) gRBF-FD} \\
\includegraphics[width=2.1
				in, height=1.5 in]{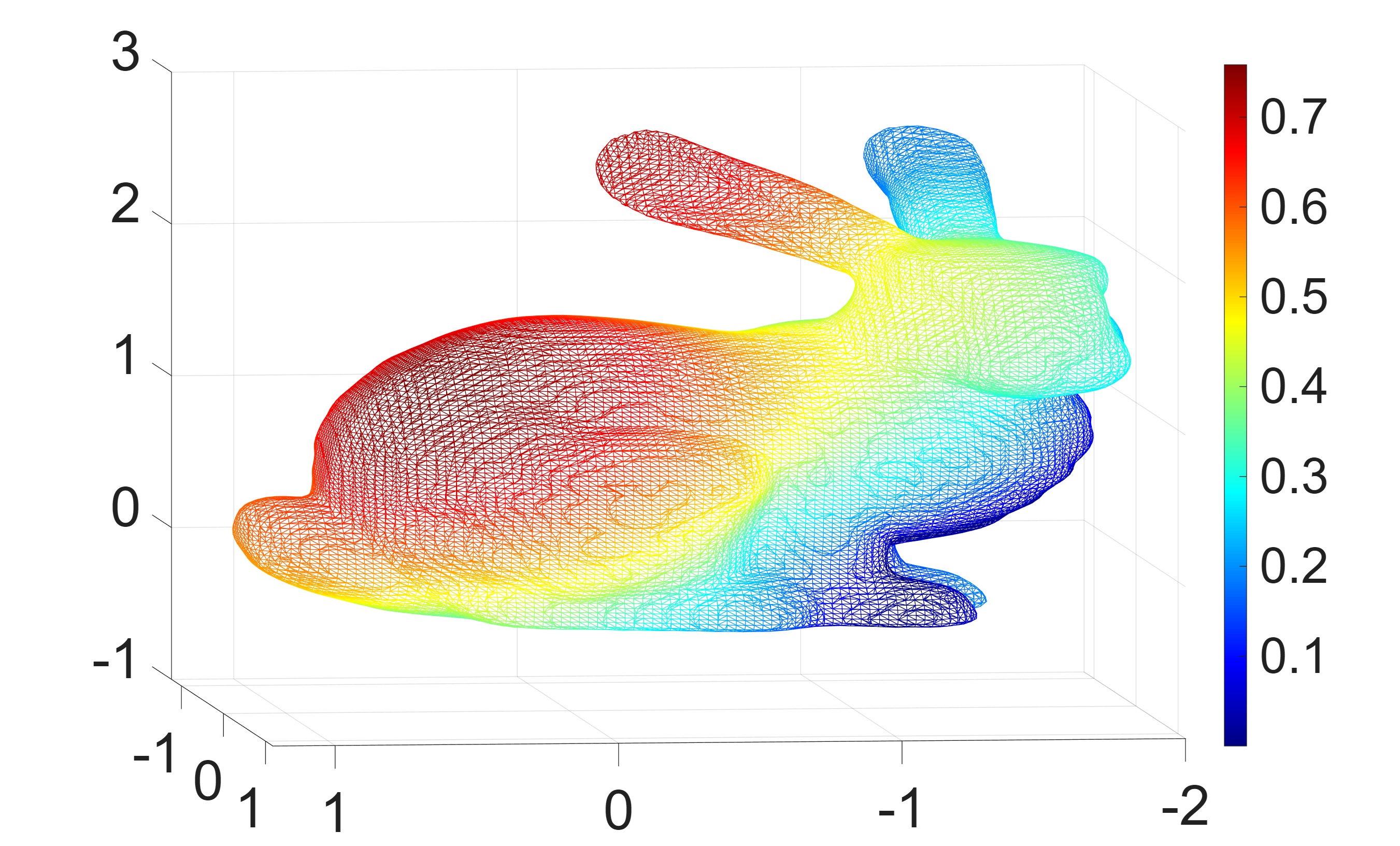} &
\includegraphics[width=2.1
				in, height=1.5 in]{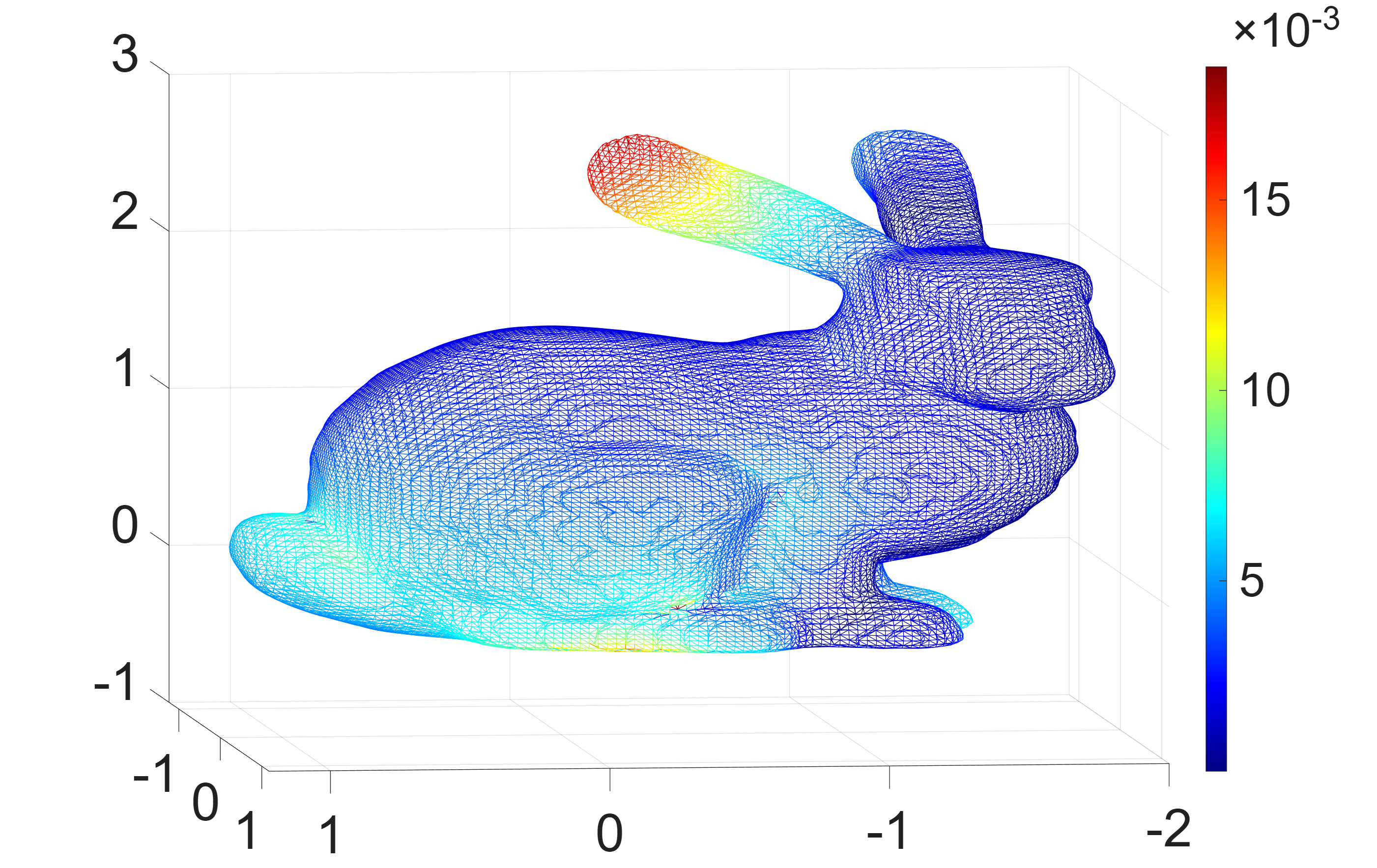} &
\includegraphics[width=2.1
				in, height=1.5 in]{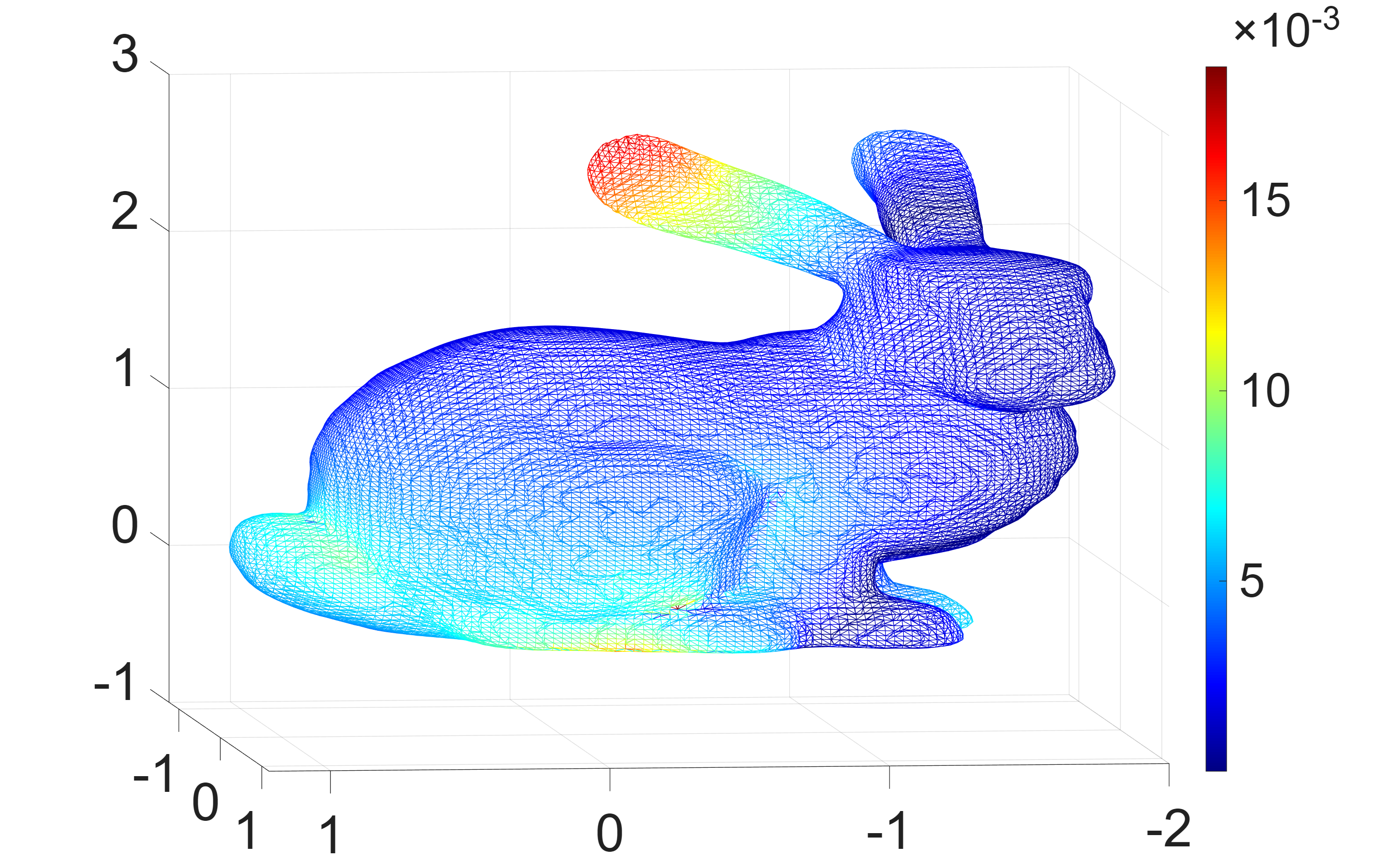}
\end{tabular}%
\caption{\textbf{Bunny example.} $N = 34,596$. (a) FEM solution as a reference. (b) Pointwise absolute difference  between FEM and GMLS with degree $l = 4$ and $K_0 = 41$-nearest neighbors.
(c) Pointwise absolute difference between FEM and gRBF-FD with $l = 4$ and $K_0 = 41$. GMLS and gRBF-FD both have maximum norm differences of 0.018 and relative differences of 2.4\%. }
\label{fig:bunny}
\end{figure*}

We solve the PDE problem in \eqref{eqn:poisson} with $a = 1$ and $f = 0.6(x_{1} + x_{2} + x_{3})$ for both models. Here we have no access to the analytic
true solutions due to the unknown embedding functions. For comparison, we take the FEM solution obtained
from the FELICITY FEM Matlab toolbox \cite{Felicity} as the reference (see Fig. \ref{fig:bunny}(a) for Bunny and Fig. \ref{fig:armadillo}(a) for Armadillo). For GMLS and gRBF-FD, the used parameters can be found in the captions of Figs. \ref{fig:bunny} and \ref{fig:armadillo}. For Bunny, it can be seen from Figs.~\ref{fig:bunny}(b) and (c) that GMLS and gRBF-FD show comparable results, both with maximum norm differences of 0.018 and relative differences of 2.4\%.  For Armadillo, it can be seen from Figs.~\ref{fig:armadillo}(b) and (c) that the absolute difference between the FEM  and the gRBF-FD is slightly smaller that of FEM and GMLS. For GMLS, the maximum norm difference is $7.4\times 10^{-4}$ and the relative difference is 0.41\%.  For gRBF-FD, the maximum norm difference is $5.9\times 10^{-4}$ and the relative difference is 0.33\%.

\begin{figure*}[htbp]
\centering
\begin{tabular}{ccc}
{(a) FEM solution} & {(b) GMLS } & {(c) gRBF-FD } \\
\includegraphics[width=2.1
				in, height=1.5 in]{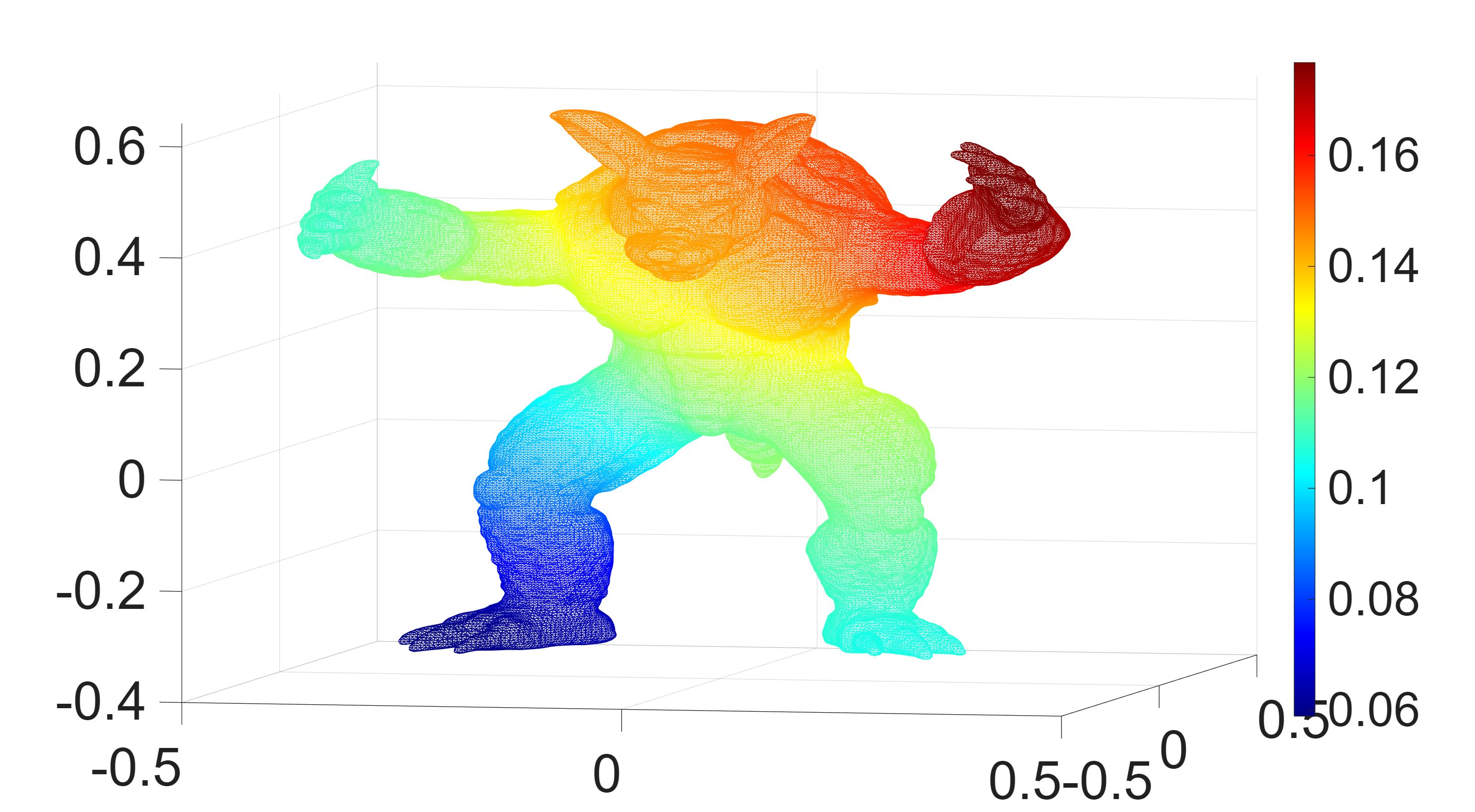} &
\includegraphics[width=2.1
				in, height=1.5 in]{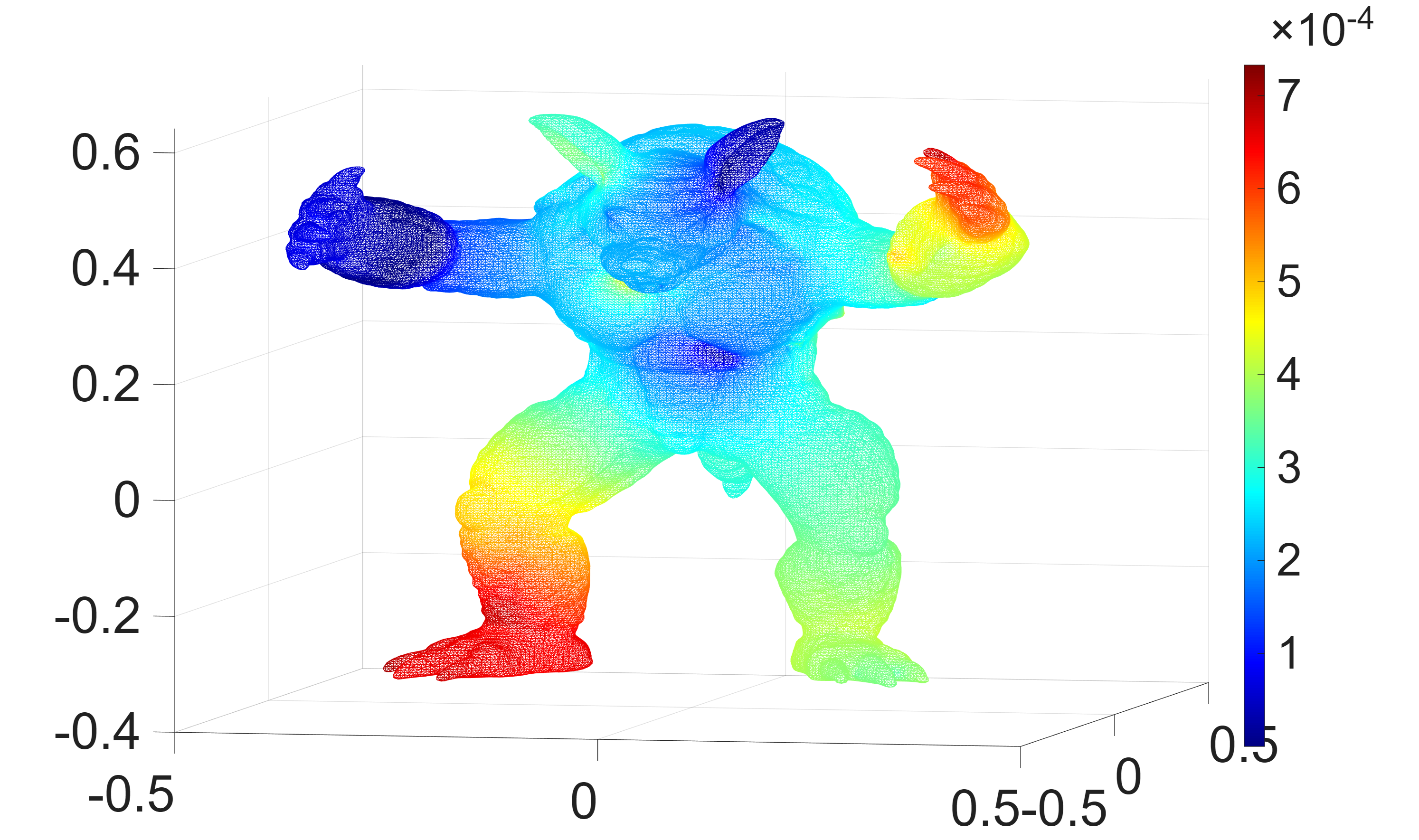} &
\includegraphics[width=2.1
				in, height=1.5 in]{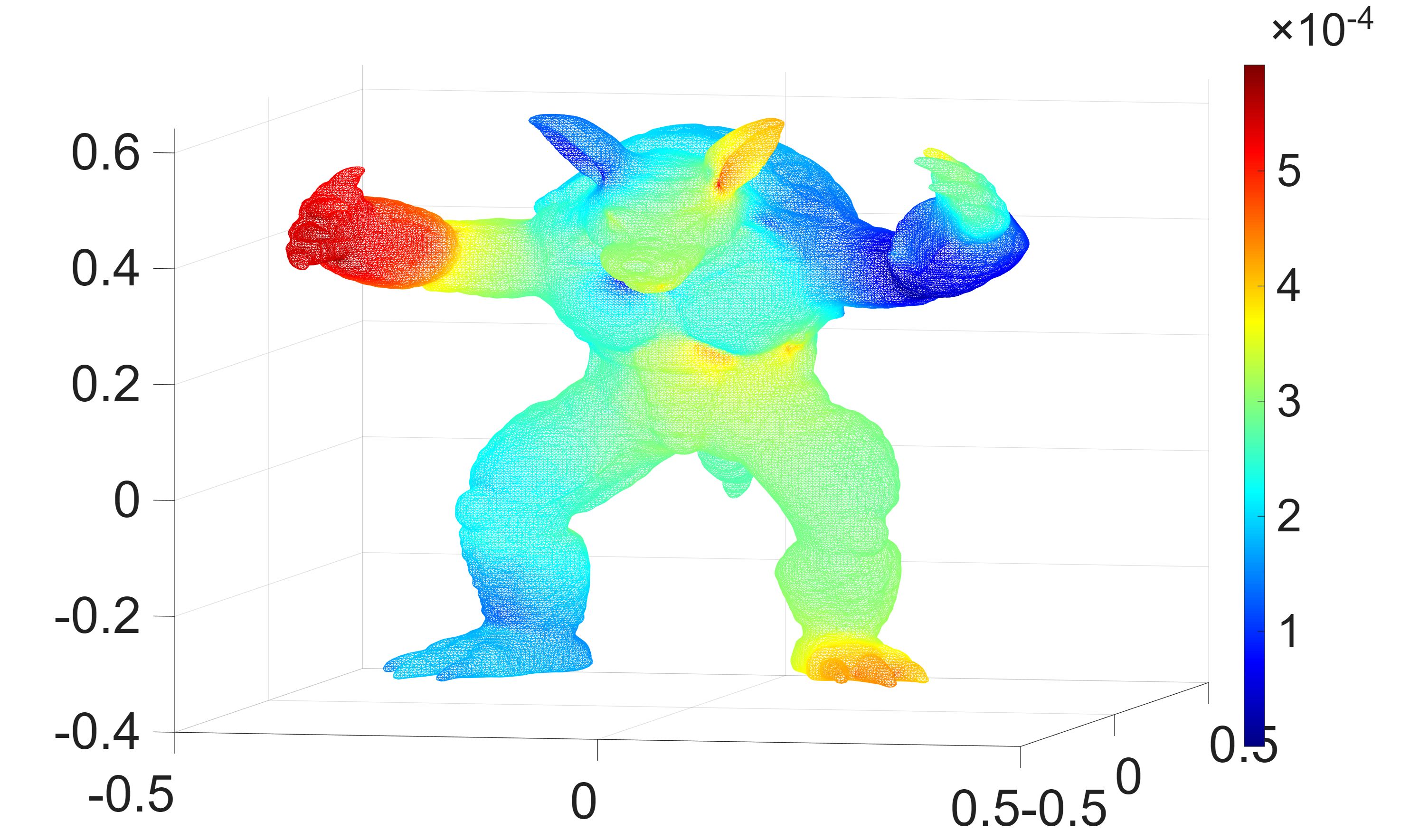}%
\end{tabular}%
\caption{\textbf{Armadillo example}. $N = 172,974$. (a) FEM solution as a reference. (b) Pointwise absolute difference between FEM and GMLS with degree $l = 4$ and $K_0 = 41$-nearest neighbors. GMLS has the maximum norm difference of $7.4\times 10^{-4}$ and the relative difference of 0.41\%.
(c) Pointwise absolute difference between FEM and gRBF-FD with $l = 4$ and $K_0 = 41$.
GRBF-FD has the maximum norm difference of $5.9\times 10^{-4}$ and the relative difference of 0.33\%.}
\label{fig:armadillo}
\end{figure*}

\section{Conclusions}

\label{sec:conclusion}


In this paper, we considered a two-step generalized RBF-FD approach for solving screened Poisson problems
on smooth manifolds, identified by randomly sampled point clouds. In the first step, we applied a GMLS regression
to capture the smooth, leading component in the Taylor's expansion of the target function $f$. In the second step, we employed a PHS interpolation to handle the residual from the GMLS regression, which contains the high-order remainder of the Taylor's expansion.
We established the error bound of gRBF-FD in Theorem~\ref{thm:fee} for the consistency analysis of the Laplace-Beltrami operator, demonstrating that the method can achieve   arbitrary-order algebraic accuracy on smooth manifolds.
For the stability of the Laplacian matrix, we employed both GMLS regression and PHS interpolation  within a weighted norm which assigns a larger weight  to the base point.
Moreover, we proposed an automated method of tuning the $K$-nearest neighbors, thereby
preventing the instability caused by a  small $K$.
Once $K$ is auto-tuned, the resulting  Laplacian matrix becomes
nearly diagonally dominant,  leading to  a numerically stable approximation.
Combining the weighted norm and auto-tuned $K$ technique,
we provided supporting numerical examples to verify the convergence of solutions.
Numerically, our gRBF-FD approach performs well for  i.i.d. randomly sampled data on manifolds---outperforming the GMLS approach.


Several open questions remain for future investigation. First, our study used the PHS function in the second step of the gRBF-FD approach to enable a fair comparison with the standard RBF-FD method, which also typically uses this parameter-free function (see e.g., \cite{flyer2016role,bayona2017role,bayona2019role,jones2023generalized,wright2023mgm}).
A natural extension is  to investigate the use of bell-shaped positive-definite RBFs, such as the Gaussian and Mat\'{e}rn class functions, in the second step of the gRBF-FD framework.
Second, a theoretical justification is lacking for the stability of gRBF-FD, although we observed numerical stability
for the nearly diagonally dominant Laplacian matrices.
Third, we realized that the specification of the $1/K$ weight function is important for the stability of Laplacian-type operators for randomly sampled point cloud data. A promising direction is to investigate weight function selection for gRBF-FD to solve other types of PDEs on manifolds with boundaries, extending the Euclidean-space results of \cite{bayona2017role}. This includes tackling equations such as the advection equation, along with handling derivative boundary conditions like Neumann and Robin types.










\section*{Acknowledgment}

This work was partially supported by the ShanghaiTech University Grant No. 2024X0303-902-01. S. J. was supported by the NSFC Grant No. 12471412 and the HPC Platform of
ShanghaiTech University.

\appendix

\section{Proof of reproduction properties}

\label{app:A}

\textbf{Proof of Lemma \ref{lem:pol}.} For the first property (1) of Lemma %
\ref{lem:pol}, we use the normalized polynomial basis functions for the GMLS
regression of an arbitrary $p=\sum_{j=1}^{m}\tilde{b}_{j}p_{\boldsymbol{%
\alpha }(j)}\left( \boldsymbol{\tilde{\theta}}\left( \mathbf{x}\right)
\right) \in \mathbb{P}_{\mathbf{x}_{0}}^{l,d}$,
\begin{equation*}
\sum_{k=1}^{K}u_{k,\Delta }(\mathbf{x})p(\mathbf{x}_{0,k})=\Delta _{M}%
\mathcal{I}_{p}\mathbf{p}_{{\mathbf{x}_{0}}}\left( \mathbf{x}\right) =\Delta
_{M}\boldsymbol{\tilde{p}}\left( \mathbf{x}\right) \left( \boldsymbol{\tilde{%
P}}^{\top }\boldsymbol{\Lambda \tilde{P}}\right) ^{-1}\boldsymbol{\tilde{P}}%
^{\top }\boldsymbol{\Lambda }\mathbf{p}_{{\mathbf{x}_{0}}},
\end{equation*}%
where $\boldsymbol{\tilde{p}}\left( \mathbf{x}\right) =\left( p_{\boldsymbol{%
\alpha }(1)}(\boldsymbol{\tilde{\theta}}\left( \mathbf{x}\right) ),\ldots
,p_{\boldsymbol{\alpha }(m)}(\boldsymbol{\tilde{\theta}}\left( \mathbf{x}%
\right) )\right) \in \mathbb{R}^{1\times m}$, $\mathbf{p}_{{\mathbf{x}_{0}}%
}=\left( p\left( \mathbf{x}{_{0,1}}\right) ,\ldots ,p(\mathbf{x}%
_{0,K})\right) ^{\top }\in \mathbb{R}^{K\times 1}$ and others are defined in
(\ref{eqn:Phitd}). Here we notice that $\mathbf{p}_{{\mathbf{x}_{0}}}=%
\boldsymbol{\tilde{P}}\mathbf{\tilde{b}}$ with $\mathbf{\tilde{b}}=(\tilde{b}%
_{1},\ldots ,\tilde{b}_{m})^{\top }\in \mathbb{R}^{m\times 1}$. Then we
arrive at
\begin{equation*}
\sum_{k=1}^{K}u_{k,\Delta }(\mathbf{x})p(\mathbf{x}_{0,k})=\Delta _{M}%
\boldsymbol{\tilde{p}}\left( \mathbf{x}\right) \mathbf{\tilde{b}}=\Delta
_{M}p(\mathbf{x}).
\end{equation*}

For the second property (2) of Lemma \ref{lem:pol}, we observe that
\begin{equation*}
(u_{1,\Delta }\left( \mathbf{x}\right) ,\ldots ,u_{K,\Delta }\left( \mathbf{x%
}\right) )=\Delta _{M}\boldsymbol{\tilde{p}}\left( \mathbf{x}\right) \left(
\boldsymbol{\tilde{P}}^{\top }\boldsymbol{\Lambda \tilde{P}}\right) ^{-1}%
\boldsymbol{\tilde{P}}^{\top }\boldsymbol{\Lambda }=\frac{1}{D_{K,\max }^{2}}%
\Delta _{\boldsymbol{\tilde{\theta}}}\boldsymbol{\tilde{p}}\left( \mathbf{x}%
\right) \left( \boldsymbol{\tilde{P}}^{\top }\boldsymbol{\Lambda \tilde{P}}%
\right) ^{-1}\boldsymbol{\tilde{P}}^{\top }\boldsymbol{\Lambda },
\end{equation*}%
where $\Delta _{\boldsymbol{\tilde{\theta}}}$ is the Laplace-Beltrami operator with
respect to the normalized Monge coordinates. We notice that all above
quantities $\Delta _{\boldsymbol{\tilde{\theta}}}\boldsymbol{\tilde{p}}%
\left( \mathbf{x}\right) $ and $\boldsymbol{\tilde{P}}$\ are of $O(1)$,
where the constant in the big-oh is independent of the stencil diameter $%
D_{K,\max }$. Hence, we arrive at $\sum_{k=1}^{K}\left\vert u_{k,\Delta }(%
\mathbf{x})\right\vert \leq C_{1}D_{K,\max }^{-2}$ for some constant $C_{1}$%
\ as desired. The proof is complete.

\begin{figure*}[htbp]
	\centering
	\begin{tabular}{cc}
		{(a) $C_3$ for different $\delta$} & {(b) $C_4$ for different $\delta$} \\
		\includegraphics[width=2.8
		in, height=2.3 in]{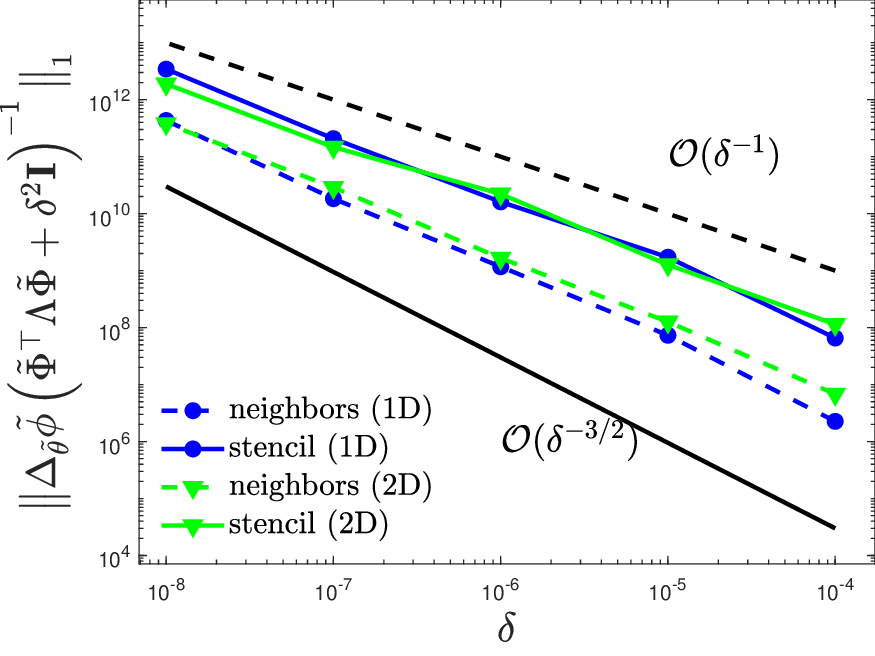} &
		\includegraphics[width=2.8
		in, height=2.3 in]{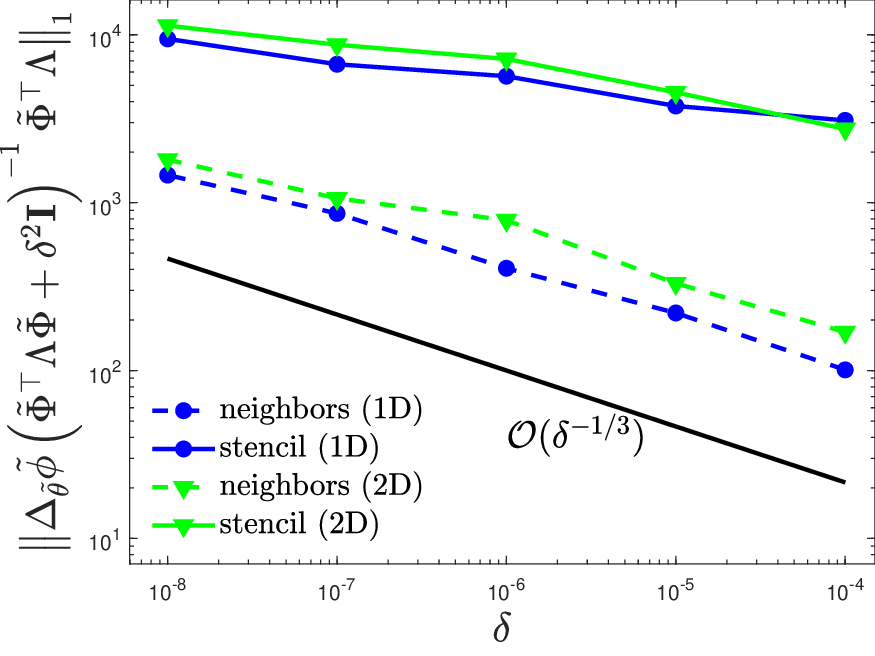}%
	\end{tabular}%
	\caption{ \textbf{1D ellipse in} $\mathbb{R}^{2}$ and \textbf{2D red blood cell (RBC) in} $\mathbb{R}^{3}$  using random data.
		Shown are (a)  the maximum of $\big\Vert\Delta _{\boldsymbol{\tilde{\theta}}}%
		\boldsymbol{\tilde{\phi}}\left( \mathbf{x}\right) \left( \boldsymbol{\tilde{%
				\Phi}}^{\top }\boldsymbol{\Lambda \tilde{\Phi}}+\delta ^{2}\mathbf{I}\right)
		^{-1}\big\Vert_1$ vs. $\delta$ and (b) the maximum of $\big\Vert\Delta _{\boldsymbol{\tilde{\theta}}}%
		\boldsymbol{\tilde{\phi}}\left( \mathbf{x}\right) \left( \boldsymbol{\tilde{%
				\Phi}}^{\top }\boldsymbol{\Lambda \tilde{\Phi}}+\delta ^{2}\mathbf{I}\right)
		^{-1}\boldsymbol{\tilde{\Phi}}^{\top }\boldsymbol{\Lambda}\big\Vert_1$
		vs. $\delta$ for ellipse (blue lines) and RBC (green lines). Here, given a fixed base point $\mathbf{x}_0$,  the maximum is taken on its $K$-nearest neighbors  (dashed lines)  and  the maximum is computed from the resamplings of  200 new points in the local stencil (solid lines). We fix $K_0=30$, $N=1600$ for ellipse and  $K_0=40$,$N=6400$ for RBC. The polynomial degree $l=4$ and the PHS parameter $\kappa=3$. }
	\label{fig:c3_c4}
\end{figure*}


\textbf{Partial proof and numerical verification of Lemma \ref{lem:rbfwk}.}
The proof for the lemma is closely related to the norm estimates of inverses of interpolation matrices for RBFs (see e.g.,  \cite{narcowich1991norms,ball1992sensitivity,narcowich1992norm,schaback1994lower,schaback1995error}).
Here, we use the weighted ridge regression (\ref{eqn:PhiIn}) to compute the inverse for random data. We only  prove the partial results and leave the remaining by numerical verification.
For the first property (1), we use the normalized PHS functions for the
interpolation of an arbitrary $\varphi =\sum_{k=1}^{K}\tilde{c}_{k}\tilde{%
\phi}_{k}\in \mathrm{Span}\{\tilde{\phi}_{1},\ldots ,\tilde{\phi}_{K}\}$,
\begin{eqnarray*}
&&\left\vert \Delta _{M}\varphi (\mathbf{x})-\Delta _{M}\mathcal{I}_{\phi }%
\boldsymbol{\varphi }_{{\mathbf{x}_{0}}}\left( \mathbf{x}\right) \right\vert
=\big\vert\Delta _{M}\boldsymbol{\tilde{\phi}}\left( \mathbf{x}\right)
\mathbf{\tilde{c}}-\Delta _{M}\boldsymbol{\tilde{\phi}}\left( \mathbf{x}%
\right) \left( \boldsymbol{\tilde{\Phi}}^{\top }\boldsymbol{\Lambda \tilde{%
\Phi}}+\delta ^{2}\mathbf{I}\right) ^{-1}\boldsymbol{\tilde{\Phi}}^{\top }%
\boldsymbol{\Lambda \tilde{\Phi}}\mathbf{\tilde{c}}\big\vert \\
&=&\big\vert\Delta _{M}\boldsymbol{\tilde{\phi}}\left( \mathbf{x}\right)
\left( \boldsymbol{\tilde{\Phi}}^{\top }\boldsymbol{\Lambda \tilde{\Phi}}%
+\delta ^{2}\mathbf{I}\right) ^{-1}\left( \boldsymbol{\tilde{\Phi}}^{\top }%
\boldsymbol{\Lambda \tilde{\Phi}}+\delta ^{2}\mathbf{I}\right) \mathbf{%
\tilde{c}}-\Delta _{M}\boldsymbol{\tilde{\phi}}\left( \mathbf{x}\right)
\left( \boldsymbol{\tilde{\Phi}}^{\top }\boldsymbol{\Lambda \tilde{\Phi}}%
+\delta ^{2}\mathbf{I}\right) ^{-1}\boldsymbol{\tilde{\Phi}}^{\top }%
\boldsymbol{\Lambda \tilde{\Phi}}\mathbf{\tilde{c}}\big\vert \\
&=&\delta ^{2}\big\vert\Delta _{M}\boldsymbol{\tilde{\phi}}\left( \mathbf{x}%
\right) \left( \boldsymbol{\tilde{\Phi}}^{\top }\boldsymbol{\Lambda \tilde{%
\Phi}}+\delta ^{2}\mathbf{I}\right) ^{-1}\mathbf{\tilde{c}}\big\vert=\delta
^{2}D_{K,\max }^{-2}\big\vert\Delta _{\boldsymbol{\tilde{\theta}}}%
\boldsymbol{\tilde{\phi}}\left( \mathbf{x}\right) \left( \boldsymbol{\tilde{%
\Phi}}^{\top }\boldsymbol{\Lambda \tilde{\Phi}}+\delta ^{2}\mathbf{I}\right)
^{-1}\mathbf{\tilde{c}}\big\vert,
\end{eqnarray*}%
where $\boldsymbol{\tilde{\phi}}\left( \mathbf{x}\right) =\left( \tilde{\phi}%
_{1}\left( \mathbf{x}\right) ,\ldots ,\tilde{\phi}_{K}\left( \mathbf{x}%
\right) \right) \in \mathbb{R}^{1\times K}$, $\mathbf{\tilde{c}}=(\tilde{c}%
_{1},\ldots ,\tilde{c}_{K})^{\top }\in \mathbb{R}^{K\times 1}$, and other
quantites can be found in (\ref{eqn:Phitd}). Here, in the last equality, the
Laplacian derivative is changed from the unnormalized $\boldsymbol{\theta }$
to the normalized $\boldsymbol{\tilde{\theta}}$. Hence we arrive at the
result, $\left\vert \Delta _{M}\varphi (\mathbf{x})-\Delta _{M}\mathcal{I}%
_{\phi }\boldsymbol{\varphi }_{{\mathbf{x}_{0}}}\left( \mathbf{x}\right)
\right\vert \leq C_{3}\delta ^{2}D_{K,\max }^{-2}\max_{1\leq k\leq
K}\left\vert \tilde{c}_{k}\right\vert $. In Fig. \ref{fig:c3_c4}(a), it can be seen that $C_3 =
\max _{\mathbf{x}\in S_{\mathbf{x}_0}}
\big\Vert\Delta _{\boldsymbol{\tilde{\theta}}}
\boldsymbol{\tilde{\phi}}\left( \mathbf{x}\right) \left( \boldsymbol{\tilde{\Phi}}^{\top }\boldsymbol{\Lambda \tilde{\Phi}}+\delta ^{2}\mathbf{I}\right)^{-1} \big\Vert_1$
is between ${O}(\delta^{-1})$ and ${O}(\delta^{-3/2})$. Consequently, we have $\left\vert \Delta _{M}\varphi (\mathbf{x})-\Delta _{M}\mathcal{I}%
_{\phi }\boldsymbol{\varphi }_{{\mathbf{x}_{0}}}\left( \mathbf{x}\right)
\right\vert = O(\delta ^{1/2}D_{K,\max }^{-2} ) $ .

%

For the second property (2), we have that
\begin{equation*}
(v_{1,\Delta }\left( \mathbf{x}\right) ,\ldots ,v_{K,\Delta }\left( \mathbf{x%
}\right) )=\Delta _{M}\boldsymbol{\tilde{\phi}}\left( \mathbf{x}\right)
\left( \boldsymbol{\tilde{\Phi}}^{\top }\boldsymbol{\Lambda \tilde{\Phi}}%
+\delta ^{2}\mathbf{I}\right) ^{-1}\boldsymbol{\tilde{\Phi}}^{\top }%
\boldsymbol{\Lambda }=D_{K,\max }^{-2}\Delta _{\boldsymbol{\tilde{\theta}}}%
\boldsymbol{\tilde{\phi}}\left( \mathbf{x}\right) \left( \boldsymbol{\tilde{%
\Phi}}^{\top }\boldsymbol{\Lambda \tilde{\Phi}}+\delta ^{2}\mathbf{I}\right)
^{-1}\boldsymbol{\tilde{\Phi}}^{\top }\boldsymbol{\Lambda }.
\end{equation*}%
Let $\boldsymbol{\Lambda }^{1/2}\boldsymbol{\tilde{\Phi}\Lambda }^{1/2}=%
\mathbf{U}\boldsymbol{\Sigma }\mathbf{U}^{\top }$\ be diagonalization of the
symmetric matrix $\boldsymbol{\Lambda }^{1/2}\boldsymbol{\tilde{\Phi}\Lambda
}^{1/2},$ where $\mathbf{U}$\ is orthonormal and $\boldsymbol{\Sigma }=%
\mathrm{diag}\left( \sigma _{1},\ldots ,\sigma _{K}\right) $ is diagonal
possibly with negative eigenvalues. Then
\begin{eqnarray*}
&&\left( \boldsymbol{\tilde{\Phi}}^{\top }\boldsymbol{\Lambda \tilde{\Phi}}%
+\delta ^{2}\mathbf{I}\right) ^{-1}\boldsymbol{\tilde{\Phi}}^{\top }%
\boldsymbol{\Lambda }=\boldsymbol{\Lambda }^{1/2}\left( \boldsymbol{\Lambda }%
^{1/2}\boldsymbol{\tilde{\Phi}}^{\top }\boldsymbol{\Lambda }^{1/2}%
\boldsymbol{\Lambda }^{1/2}\boldsymbol{\tilde{\Phi}\Lambda }^{1/2}+\delta
^{2}\boldsymbol{\Lambda }\right) ^{-1}\boldsymbol{\Lambda }^{1/2}\boldsymbol{%
\tilde{\Phi}}^{\top }\boldsymbol{\Lambda }^{1/2}\boldsymbol{\Lambda }^{1/2}
\\
&=&\boldsymbol{\Lambda }^{1/2}\left( \mathbf{U}\boldsymbol{\Sigma }^{2}%
\mathbf{U}^{\top }+\delta ^{2}\boldsymbol{\Lambda }\right) ^{-1}\mathbf{U}%
\boldsymbol{\Sigma }\mathbf{U}^{\top }\boldsymbol{\Lambda }^{1/2}=%
\boldsymbol{\Lambda }^{1/2}\mathbf{U}\left( \boldsymbol{\Sigma }^{2}+\delta
^{2}\mathbf{U}^{\top }\boldsymbol{\Lambda }\mathbf{U}\right) ^{-1}%
\boldsymbol{\Sigma }\mathbf{U}^{\top }\boldsymbol{\Lambda }^{1/2}.
\end{eqnarray*}%
We now bound the 2-norm of the above matrix,%
\begin{eqnarray*}
&&\Vert \left( \boldsymbol{\tilde{\Phi}}^{\top }\boldsymbol{\Lambda \tilde{%
\Phi}}+\delta ^{2}\mathbf{I}\right) ^{-1}\boldsymbol{\tilde{\Phi}}^{\top }%
\boldsymbol{\Lambda }\Vert _{2}\leq \Vert \boldsymbol{\Lambda }^{1/2}\Vert
_{2}\Vert \mathbf{U}\Vert _{2}\Vert \left( \boldsymbol{\Sigma }^{2}+\delta
^{2}\mathbf{U}^{\top }\boldsymbol{\Lambda }\mathbf{U}\right) ^{-1}%
\boldsymbol{\Sigma }\Vert _{2}\Vert \mathbf{U}^{\top }\Vert _{2}\Vert
\boldsymbol{\Lambda }^{1/2}\Vert _{2} \\
&=&\sqrt{\lambda _{\max }\left( \left( \boldsymbol{\Sigma }^{2}+\delta ^{2}%
\mathbf{U}^{\top }\boldsymbol{\Lambda }\mathbf{U}\right) ^{-1}\boldsymbol{%
\Sigma }^{2}\left( \boldsymbol{\Sigma }^{2}+\delta ^{2}\mathbf{U}^{\top }%
\boldsymbol{\Lambda }\mathbf{U}\right) ^{-1}\right) }=\sqrt{\lambda _{\max
}\left( \left\vert \boldsymbol{\Sigma }\right\vert \left( \boldsymbol{\Sigma
}^{2}+\delta ^{2}\mathbf{U}^{\top }\boldsymbol{\Lambda }\mathbf{U}\right)
^{-2}\left\vert \boldsymbol{\Sigma }\right\vert \right) } \\
&\leq &\sqrt{\lambda _{\max }\left( \left\vert \boldsymbol{\Sigma }%
\right\vert \left( \boldsymbol{\Sigma }^{2}+\delta ^{2}/K\mathbf{I}\right)
^{-2}\left\vert \boldsymbol{\Sigma }\right\vert \right) }=\max_{1\leq i\leq
K}\frac{\left\vert \sigma _{i}\right\vert }{\sigma _{i}^{2}+\delta ^{2}/K}%
\leq \frac{\sqrt{K}}{2\delta },
\end{eqnarray*}%
where we have used the fact that $\boldsymbol{\Sigma }^{2}+\delta ^{2}%
\mathbf{U}^{\top }\boldsymbol{\Lambda }\mathbf{U}\succeq \boldsymbol{\Sigma }%
^{2}+\delta ^{2}/K\mathbf{I}$. Then we obtain that
\begin{eqnarray*}
\sum_{k=1}^{K}\left\vert v_{k,\Delta }(\mathbf{x})\right\vert  &=&D_{K,\max
}^{-2}\Vert \Delta _{\boldsymbol{\tilde{\theta}}}\boldsymbol{\tilde{\phi}}%
\left( \mathbf{x}\right) \left( \boldsymbol{\tilde{\Phi}}^{\top }\boldsymbol{%
\Lambda \tilde{\Phi}}+\delta ^{2}\mathbf{I}\right) ^{-1}\boldsymbol{\tilde{%
\Phi}}^{\top }\boldsymbol{\Lambda }\Vert _{1} \\
&\leq &D_{K,\max }^{-2}\sqrt{K}\Vert \Delta _{\boldsymbol{\tilde{\theta}}}%
\boldsymbol{\tilde{\phi}}\left( \mathbf{x}\right) \left( \boldsymbol{\tilde{%
\Phi}}^{\top }\boldsymbol{\Lambda \tilde{\Phi}}+\delta ^{2}\mathbf{I}\right)
^{-1}\boldsymbol{\tilde{\Phi}}^{\top }\boldsymbol{\Lambda }\Vert _{2}\leq
C_{4}D_{K,\max }^{-2},
\end{eqnarray*}%
for some $C_{4}=O(1/\delta )$, where Cauchy-Schwarz inequality has
been used for the first inequality.
However, the above bound for $C_4$ is not sharp with respect to $\delta$. As can be seen from Fig. \ref{fig:c3_c4}(b), the maximum, $C_4 =
\max _{\mathbf{x}\in S_{\mathbf{x}_0}}
\big\Vert\Delta _{\boldsymbol{\tilde{\theta}}}
\boldsymbol{\tilde{\phi}}\left( \mathbf{x}\right) \left( \boldsymbol{\tilde{\Phi}}^{\top }\boldsymbol{\Lambda \tilde{\Phi}}+\delta ^{2}\mathbf{I}\right)^{-1} \boldsymbol{\tilde{\Phi}}^{\top }\boldsymbol{\Lambda} \big\Vert_1$,
is only around ${O}(\delta^{-1/3})$. Numerically, we have
$\sum_{k=1}^{K}\left\vert v_{k,\Delta }(\mathbf{x})\right\vert = O(\delta ^{-1/3}D_{K,\max }^{-2} ) $ .





\section{Probabilistic stencil size result}

\label{app:B}

To obtain the probabilistic stencil size result, we need the following
result from \cite{croke1980some}:

\begin{lemma}
\label{lem:vol}(Proposition 14 in \cite{croke1980some}) Let $B_{\delta }(x)$
denote a geodesic ball of radius $\delta $ around a point $\mathbf{x}\in M$.
For a sufficiently small $\delta <\iota (M)/2$ with $\iota (M)$\ the
injectivity radius of $M$, we have
\begin{equation*}
\text{Vol}(B_{\delta }(\mathbf{x}))\geq C_{d}\delta ^{d},
\end{equation*}%
where $C_{d}$ is a constant depending only on the dimension $d$ of the
manifold.
\end{lemma}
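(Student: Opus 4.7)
The plan is to work in normal (exponential) coordinates centered at $\mathbf{x}$, express the volume of $B_{\delta}(\mathbf{x})$ as an integral over its Euclidean pre-image in $T_{\mathbf{x}} M$, and bound the resulting volume density from below by a radial comparison argument. Since $\delta < \iota(M)/2 < \iota(M)$, the exponential map $\exp_{\mathbf{x}} : \tilde{B}_{\delta}(0) \subset T_{\mathbf{x}} M \to B_{\delta}(\mathbf{x}) \subset M$ is a diffeomorphism onto the geodesic ball, where $\tilde{B}_{\delta}(0)$ is the Euclidean $\delta$-ball in the tangent space. Parametrizing the Euclidean ball by $v = t\xi$ with $\xi \in \mathbb{S}^{d-1} \subset T_{\mathbf{x}} M$ and $t \in [0,\delta]$, the Gauss lemma gives
\begin{equation*}
\text{Vol}(B_{\delta}(\mathbf{x})) = \int_0^\delta \int_{\mathbb{S}^{d-1}} J(t,\xi) \, d\sigma(\xi) \, dt,
\end{equation*}
where $J(t,\xi) = \sqrt{\det g(\exp_{\mathbf{x}}(t\xi))}\, t^{d-1}$ is the volume density in normal coordinates, satisfying $\lim_{t \to 0^+} J(t,\xi)/t^{d-1} = 1$ by Proposition \ref{prop:norm} (which gives $g_{ij}(\mathbf{x}) = \delta_{ij}$).

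The next step is to bound $J(t,\xi)$ from below by Jacobi-field comparison. Along the radial geodesic $t \mapsto \exp_{\mathbf{x}}(t\xi)$, one writes $J(t,\xi) = \|J_1(t) \wedge \cdots \wedge J_{d-1}(t)\|$, where $J_1,\ldots,J_{d-1}$ are the normal Jacobi fields with $J_i(0) = 0$ and $J_i'(0) = e_i$ (an orthonormal frame in $\xi^\perp$). Rauch's comparison theorem yields $J(t,\xi) \geq c\, t^{d-1}$ for all $t$ strictly below the conjugate radius, which is in turn bounded below by $\iota(M)$. Integrating then produces $\text{Vol}(B_{\delta}(\mathbf{x})) \geq c\, \omega_{d-1}\delta^d/d$, which already has the desired form $C_d \delta^d$.

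The main obstacle, and the genuine content of Croke's Proposition 14 beyond the soft Rauch bound above, is that the constant $C_d$ must depend \emph{only} on the intrinsic dimension $d$ and not on the sectional curvature of $M$. The Rauch constant $c$ above a priori degrades with upper curvature bounds on $M$, so an additional idea is needed. To eliminate this dependence using only the hypothesis $\delta < \iota(M)/2$, one follows Croke's isoperimetric averaging approach: integrate the lengths of minimizing geodesics joining pairs of points in $B_{\delta}(\mathbf{x})$ (which are unique and smooth because all pairwise distances stay strictly below $\iota(M)$), apply Cauchy--Schwarz to the resulting double integral, and compare the resulting quantity to a purely Euclidean expression in the tangent space via a change of variables through $\exp_{\mathbf{x}}^{-1}$. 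This removes curvature from the final constant and leaves behind only dimensional factors (volumes of Euclidean balls and spheres). Making this last reduction rigorous is the step I expect to require the most care; in practice I would simply cite Croke's original argument at that point rather than re-derive it, since the statement is quoted verbatim from his paper.
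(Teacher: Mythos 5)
The paper offers no proof of this lemma at all: it is quoted verbatim as Proposition 14 of \cite{croke1980some} and used purely as an external ingredient in the proof of Lemma \ref{lem:hK}, so there is no in-paper argument to compare yours against. As a standalone sketch, your proposal is sound as far as it goes, and you correctly isolate the one genuinely hard point. The geodesic-polar-coordinate computation and the Rauch comparison do give $\text{Vol}(B_{\delta}(\mathbf{x}))\geq c\,\omega_{d-1}\delta^{d}/d$, but the constant $c$ there necessarily depends on an upper sectional-curvature bound (without such a bound, Jacobi fields below the conjugate radius are nonzero yet can be arbitrarily small, so no metric-independent $c$ exists); the content of Croke's result is precisely that under the sole hypothesis $\delta<\iota(M)/2$ the constant can be taken to depend on $d$ alone. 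That curvature-free constant comes from Croke's isoperimetric inequality, whose proof runs through Santal\'o's formula on the unit tangent bundle and the Berger--Kazdan inequality; your description of ``averaging over minimizing geodesics between pairs of points and applying Cauchy--Schwarz'' captures the flavor but is not a derivation, and you explicitly fall back on citing Croke for that step. Since the paper does exactly the same---cite Croke and move on---your treatment is in effect equivalent to the paper's, with the added (and worthwhile) value of explaining why the naive comparison-geometry route cannot by itself deliver a dimension-only constant.
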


%

\textbf{Proof of Lemma \ref{lem:hK}. }Let $B_{\delta }(\mathbf{x}_{0})$\
denote a geodesic ball of radius $\delta $\ around the base point $\mathbf{x}%
_{0}\in \mathbf{X}_{M}\subset M$. Let $B_{\delta }^{c}(\mathbf{x}%
_{0})=M\backslash B_{\delta }(\mathbf{x}_{0})$ be the complement of $%
B_{\delta }(\mathbf{x}_{0})$. For a point $\mathbf{x}_{0}\in \mathbf{X}_{M}$
 sampled from $M$ with a density $q(\mathbf{x})$, it holds that
\begin{equation*}
\mathbb{P}_{\mathbf{X}_{M}\sim Q}\left( \mathbf{x}_{0}\in \mathbf{X}_{M}\cap
B_{\delta }^{c}(\mathbf{x}_{0})\right) =1-\int_{B_{\delta }(\mathbf{x}%
_{0})}q(\mathbf{x})dV(\mathbf{x}).
\end{equation*}%
Let $d_{g}(\mathbf{x}_{0,k},\mathbf{x}_{0})$ be the geodesic distance from $%
\mathbf{x}_{0,k}$ to the base $\mathbf{x}_{0}$ for $k=1,\ldots ,K$. For
small $\delta ,$ we can arrive at
\begin{eqnarray*}
&&\mathbb{P}_{\mathbf{X}_{M}\sim Q}(\max_{k\in \{1,\ldots ,K\}}d_{g}(\mathbf{%
x}_{0,k},\mathbf{x}_{0})>\delta )=\mathbb{P}\left( \text{there are at least }%
N-K\text{ points of }\mathbf{X}_{M}\text{ inside of }B_{\delta }^{c}(\mathbf{%
x}_{0})\right) \\
&\leq &\mathbb{P(}\mathbf{x}_{0,K+1}\in B_{\delta }^{c},\ldots ,\mathbf{x}%
_{0,N}\in B_{\delta }^{c})=\left( 1-\int_{B_{\delta }(\mathbf{x}_{0})}q(%
\mathbf{x})dV(\mathbf{x})\right) ^{N-K}.
\end{eqnarray*}%
where $\mathbf{x}_{0,K+1},\ldots ,\mathbf{x}_{0,N}$ are the $N-K$ points out
of the stencil $S_{\mathbf{x}_{0}}$. Using the assumption $q\geq q_{\min }$\
and Lemma \ref{lem:vol} from \cite{croke1980some}, we obtain that%
\begin{eqnarray*}
\mathbb{P}_{\mathbf{X}_{M}\sim Q}(\max_{k\in \{1,\ldots ,K\}}d_{g}(\mathbf{x}%
_{0,k},\mathbf{x}_{0}) >\delta )&\leq& \left( 1-q_{\min }\text{Vol}%
(B_{\delta }(\mathbf{x}_{0}))\right) ^{N-K}\leq \left( 1-q_{\min
}C_{d}\delta ^{d}\right) ^{N-K} \\
&\leq &\exp (-q_{\min }C_{d}(N-K)\delta ^{d}),
\end{eqnarray*}%
where $C_{d}$ is a constant depending on the dimension $d$. Using the fact
that $\left\Vert \boldsymbol{\theta }\left( \mathbf{x}_{0,k}\right) -%
\boldsymbol{\theta }\left( \mathbf{x}_{0}\right) \right\Vert \leq d_{g}(%
\mathbf{x}_{0,k},\mathbf{x}_{0}),$ we arrive at
\begin{equation*}
\mathbb{P}_{\mathbf{X}_{M}\sim Q}(R_{K,\max }\left( \mathbf{x}_{0}\right)
>\delta )\leq \exp (-q_{\min }C_{d}(N-K)\delta ^{d}).
\end{equation*}%
Moreover, taking $\exp (-q_{\min }C_{d}(N-K)\delta ^{d})=\frac{1}{N},$ we
obtain that%
\begin{equation*}
\delta =\left( \frac{\log N}{q_{\min }C_{d}(N-K)}\right) ^{\frac{1}{d}%
}=O\left( \left( \frac{\log N}{N}\right) ^{\frac{1}{d}}\right) .
\end{equation*}%
Since $R_{K,\max }({\mathbf{x}_{0}})\leq D_{K,\max }({\mathbf{x}_{0}})\leq
C_{K}R_{K,\max }({\mathbf{x}_{0}})$, we arrive at
\begin{equation*}
R_{K,\max }\left( \mathbf{x}_{0}\right) \leq \delta =O\left( \left( \frac{%
\log N}{N}\right) ^{\frac{1}{d}}\right) ,\text{ \ }D_{K,\max }\leq
C_{K}\delta =O\left( \left( \frac{\log N}{N}\right) ^{\frac{1}{d}}\right) ,
\end{equation*}%
with probability higher than $1-\frac{1}{N}$.

Subsequently, we compute the expectation and the standard deviation of $%
D_{K,\max }({\mathbf{x}_{0}})$. Let $p_{0}(z)$\ be the density function of
the random variable $Z:=D_{K,\max }({\mathbf{x}_{0}})$. Let $a_{\max }$ be
the diameter of the manifold $M$, that is, $a_{\max }=\max_{\mathbf{x},%
\mathbf{y}\in M}\left\Vert \mathbf{x}-\mathbf{y}\right\Vert $. Then, the
expectation can be bounded by%
\begin{eqnarray*}
ED_{K,\max }({\mathbf{x}_{0}}) &=&\int_{0}^{a_{\max
}}zp_{0}(z)dz=\int_{0}^{C_{K}\delta }zp_{0}(z)dz+\int_{C_{K}\delta
}^{a_{\max }}zp_{0}(z)dz \\
&\leq &C_{K}\delta +a_{\max }P_{\mathbf{X}_{M}\sim Q}(D_{K,\max }({\mathbf{x}%
_{0}})>C_{K}\delta )\leq C_{K}\delta +a_{\max }\exp (-q_{\min
}C_{d}(N-K)\delta ^{d}).
\end{eqnarray*}%
Taking $\delta =\left( \frac{\log N}{dq_{\min }C_{d}(N-K)}\right) ^{1/d}$,
then the expectation becomes%
\begin{equation*}
ED_{K,\max }({\mathbf{x}_{0}})\leq C_{K}\left( \frac{\log N}{dq_{\min
}C_{d}(N-K)}\right) ^{\frac{1}{d}}+a_{\max }N^{-\frac{1}{d}}=O\left( \left(
\frac{\log N}{N}\right) ^{\frac{1}{d}}\right) .
\end{equation*}%
The standard deviation can be calculated similarly,
\begin{equation*}
\sigma \left( D_{K,\max }({\mathbf{x}_{i}})\right) \leq \left( ED_{K,\max
}^{2}({\mathbf{x}_{0}})\right) ^{1/2}\leq \left[ C_{K}^{2}\delta
^{2}+a_{\max }^{2}\exp (-q_{\min }C_{d}(N-K)\delta ^{d})\right] ^{1/2}.
\end{equation*}%
Taking here $\delta =\left( \frac{2\log N}{dq_{\min }C_{d}(N-K)}\right)
^{1/d}$, then the standard deviation becomes%
\begin{equation*}
\sigma \left( D_{K,\max }({\mathbf{x}_{i}})\right) \leq \left[
C_{K}^{2}\left( \frac{2\log N}{dq_{\min }C_{d}(N-K)}\right) ^{\frac{2}{d}%
}+a_{\max }^{2}N^{-\frac{2}{d}}\right] ^{1/2}=O\left( \left( \frac{\log N}{N}%
\right) ^{\frac{1}{d}}\right) .
\end{equation*}
The proof is complete.

\bibliographystyle{abbrv}
\bibliography{arxiv_kme_cleaned}

\end{document}